\newtheorem{theorem}{Theorem}[section]
\newtheorem{definition}[theorem]{Definition}
\newtheorem{lemma}[theorem]{Lemma}
\newtheorem{proposition}[theorem]{Proposition}
\newtheorem{remark}[theorem]{Remark}
\newenvironment{proof}[1][Proof]{\noindent\textbf{#1.} }{\ \rule{0.5em}{0.5em}}
\numberwithin{equation}{section}
\def \loc {\mathrm{loc}}
\def \supp {\mathrm{supp}\,}
\def \loc {\mathrm{loc}}
\begin{document}

\title{Regularity for nonuniformly elliptic equations with $p,q-$growth and
explicit $x,u-$dependence}
\author{Giovanni Cupini$^{1}$, Paolo Marcellini$^{2}$\thanks{%
Corresponding author.}, Elvira Mascolo$^{2}$ \\
$\quad $ \\
{\normalsize $^{1}$Dipartimento di Matematica, Universit\`a di Bologna }\\
{\normalsize Piazza di Porta S. Donato 5, 40126 - Bologna, Italy }\\
{\normalsize giovanni.cupini@unibo.it }\\
{\normalsize $^{2}$Dipartimento di Matematica e Informatica ``U. Dini'',
Universit\`a di Firenze}\\
{\normalsize Viale Morgagni 67/A, 50134 - Firenze, Italy}\\
{\normalsize paolo.marcellini@unifi.it, elvira.mascolo@unifi.it}}
\date{}
\maketitle

\begin{abstract}
We are interested in the regularity of weak solutions $u$ to the elliptic
equation in divergence form as in (\ref{elliptic equation}); precisely in
their local boundedness and their local Lipschitz continuity under \textit{%
general growth conditions}, the so called $p,q-$\textit{growth conditions}
as in (\ref{ellipticity in the introduction}),(\ref{growth in the
introduction}) below. We found a unique set of assumptions to get all these
regularity properties at the same time; in the meantime we also found the
way to treat a more general context, with explicit dependence on $\left(
x,u\right) $, other than on the gradient variable $\xi =Du$; these aspects
require particular attention due to the $p,q-$context, with some differences
and new difficulties compared to the standard case $p=q$.
\end{abstract}

\tableofcontents

\bigskip

\emph{Key words}: Elliptic equations, Regularity of solutions, Local
Boundedness, Local Lipschitz continuity, Higher differentiability,
p,q-growth conditions, General growth conditions.

\emph{Mathematics Subject Classification (2020)}: Primary: 35D30, 35J15,
35J60; Secondary: 35B45, 49N60.

\section{Introduction}

We consider elliptic equations of the form 
\begin{equation}
\sum_{i=1}^{n}\frac{\partial }{\partial x_{i}}a^{i}\left( x,u\left( x\right)
,Du\left( x\right) \right) =b\left( x,u\left( x\right) ,Du\left( x\right)
\right) ,\;\;\;\;\;x\in \Omega \,,  \label{elliptic equation}
\end{equation}%
where the vector field $a\left( x,u,\xi \right) =\left( a^{i}\left( x,u,\xi
\right) \right) _{i=1,\ldots ,n}$ is locally Lipschitz continuous in $\Omega
\times \mathbb{R}\times \mathbb{R}^{n}$ and the right hand side $b\left(
x,u,\xi \right) $ is a Carath\'{e}odory function in $\Omega \times \mathbb{R}%
\times \mathbb{R}^{n}$. Our assumptions are characterized by the fact that
the second order elliptic equation in divergence form (\ref{elliptic
equation}) explicitly depends on $\left( x,u\right) \in \Omega \times 
\mathbb{R}$, where $\Omega $ is an open set in $\mathbb{R}^{n}$, $n\geq 2$,
other than on the gradient variable $\xi \in \mathbb{R}^{n}$. Further
characteristics are the ellipticity and growth condition in general form,
the so called $p,q-$\textit{conditions} as in (\ref{ellipticity in the
introduction}),(\ref{growth in the introduction}) below.

We are interested in the regularity of the weak solutions $u$ to (\ref%
{elliptic equation}); precisely in their local boundedness, their local
Lipschitz continuity and higher differentiability under \textit{general
growth conditions}. The effect of these results, the gradient $Du$ being 
\textit{a-posteriori} locally bounded, is that the growth properties of the
differential operator reduces to the so called \textit{natural growth
conditions }(a name which usually denotes the case $p=q$); thus, having in
force the local Lipschitz continuity considered in this manuscript, \textit{%
also the }$C^{1,\alpha }$\textit{\ regularity can be deduced by the
classical literature on regularity}.

A few words about the well-known classical results on regularity for weak
solutions to elliptic equations as in (\ref{elliptic equation}). A main tool
is the fundamental H\"{o}lder continuity result by De Giorgi \cite{De Giorgi
1957}, which has been extensively considered also in the book by
Ladyzhenskaya-Ural'tseva \cite[Chapter 4]{Ladyzhenskaya-Uraltseva 1968}. We
also refer to the article by Evans \cite{Evans 1982 JDE}, with explicit
dependence of $\left( a^{i}\right) _{i=1,\ldots ,n}$ in (\ref{elliptic
equation}) only on the gradient variable and with right hand side $b=0$; the
celebrated paper by DiBenedetto \cite{DiBenedetto 1983} on the $C^{1,\alpha
}-$regularity for weak solutions of a class of degenerate elliptic
equations; the famous $C^{1,\alpha }-$regularity result by Tolksdorf \cite%
{Tolksdorf 1984 JDE}; the article by Manfredi \cite{Manfredi 1988} on the $%
p- $Laplacian type integrals of the Calculus of Variations. Later, see also
the well known articles by Lieberman \cite{Lieberman 1991} and Marcellini 
\cite{Marcellini 1991}, the book by Giusti \cite{Giusti 2003 book}, the
results by Duzaar-Mingione \cite{Duzaar-Mingione 2010} and Cianchi-Maz'ya 
\cite{Cianchi-Mazya 2011}.

Our point of view are \textit{the }$p,q-$\textit{growth conditions} ($p\leq
q $) with respect to the gradient variable $\xi :=Du$, in this general
context with $\left( x,u\right) $ explicit dependence too. Precisely, the $%
p- $\textit{ellipticity} 
\begin{equation}
\sum_{i,j=1}^{n}\frac{\partial a^{i}}{\partial \xi _{j}}\lambda _{i}\lambda
_{j}\geq m\left( 1+\left\vert \xi \right\vert ^{2}\right) ^{\frac{p-2}{2}%
}\left\vert \lambda \right\vert ^{2}\;,
\label{ellipticity in the introduction}
\end{equation}%
valid for a positive constant $m$ and for every $\lambda ,\xi \in \mathbb{R}%
^{n}$ and $\left( x,u\right) \in \Omega \times \mathbb{R}$; and the $q-$%
\textit{growth }%
\begin{equation}
\left\vert \frac{\partial a^{i}}{\partial \xi _{j}}\right\vert \leq M\left(
\left( 1+\left\vert \xi \right\vert ^{2}\right) ^{\frac{q-2}{2}}+\left\vert
u\right\vert ^{\alpha }\right) \,,  \label{growth  in the introduction}
\end{equation}%
for constants $M>0$, $0\leq \alpha \leq \frac{2\left( q-2\right) }{q-p+2}$
(in particular $0\leq \alpha \leq q-2$ if $q=p$) and for all $\left( x,u,\xi
\right) \in \Omega \times \mathbb{R\times R}^{n}$ and $i,j\in \left\{
1,2,\ldots ,n\right\} $.

The interest in existence and regularity for weak solutions to elliptic
equations in divergence form, under general growth conditions, has risen in
the last decades, since the first related results in the 90's. Nowadays the
literature on $p,q-$problems is large and maybe there is not need to enter
too much into details. However we emphasize that a bound on the ratio $\frac{%
q}{p}$ of the type 
\begin{equation}
1\leq \tfrac{q}{p}<1+O\left( \tfrac{1}{n}\right)
\label{bound for the ratio q/p}
\end{equation}%
is necessary and, at the same time, another bound on the ratio $\frac{q}{p}$
of the same type is also sufficient for regularity. The first approach can
be found in \cite{Marcellini ARMA 1989},\cite{Marcellini 1991},\cite%
{Marcellini 1993},\cite{Marcellini JOTA 1996}. Not only in \textit{the }$%
p,q- $\textit{context} but also for \textit{general nonuniformly elliptic
problems}, see \cite{Marcellini 1993},\cite{Marcellini JOTA 1996},\cite%
{Marcellini AnnaliPisa 1996},\cite{DiMarco-Marcellini 2020}.

We already said that nowadays a large literature exists on this subject.
Recently Mingione gave a strong impulse with the introduction of the
terminology (and not only terminology, but also fine results obtained with
some colleagues of him, as detailed below) of \textit{double phase integrals}%
, of the type 
\begin{equation}
\int_{\Omega }\left\{ \tfrac{1}{p}\left\vert Du\right\vert ^{p}+\tfrac{1}{q}%
a\left( x\right) \left\vert Du\right\vert ^{q}\right\} \,dx\,
\label{double phase integral}
\end{equation}%
and also their nondegenerate version 
\begin{equation}
\int_{\Omega }\left\{ \tfrac{1}{p}\left( 1+\left\vert Du\right\vert
^{2}\right) ^{\frac{p}{2}}+\tfrac{1}{q}a\left( x\right) \left( 1+\left\vert
Du\right\vert ^{2}\right) ^{\frac{q}{2}}\right\} \,dx\,
\label{double phase integral (nondegenerate)}
\end{equation}%
whose Euler's first variation give rise to the differential equation in
divergence form 
\begin{equation}
\sum_{i=1}^{n}\frac{\partial }{\partial x_{i}}\left\{ \left( \left(
1+\left\vert Du\right\vert ^{2}\right) ^{\frac{p}{2}-1}+a\left( x\right)
\left( 1+\left\vert Du\right\vert ^{2}\right) ^{\frac{q}{2}-1}\right)
u_{x_{i}}\right\} =0,\;\;\;\;\;x\in \Omega \,.  \label{double phase equation}
\end{equation}%
Here the coefficient $a\left( x\right) $ is either H\"{o}lder continuous or
local Lipschitz continuous in $\Omega $. However, even more relevant, $%
a\left( x\right) $ is greater than or equal to zero in $\Omega $, with the
possibility to be equal to zero on a closed subset of $\Omega $. Therefore
the energy integral in (\ref{double phase integral}) or in (\ref{double
phase integral (nondegenerate)}), and the equation in (\ref{double phase
equation}) too, when $p<q$ behave like a $q-$Laplacian in the subset of $%
\Omega $ where $a\left( x\right) >0$ (and in this case the $p-$addendum
plays the role of a \textit{``lower order term"}), while it is a $p-$%
Laplacian in the subset of $\Omega $ where $a\left( x\right) =0$.

The \textit{double phase integrals in }(\ref{double phase integral}) and in (%
\ref{double phase integral (nondegenerate)}) are relevant examples \ of
energy integrals with $p,q-$growth; many other examples exist: $p-$power
times a logarithm, variable exponents $p\left( x\right) $, anisotropic
integrands such as $\sum_{i=1}^{n}\left\vert u_{x_{i}}\right\vert ^{p_{i}}$
with $p=\min_{i}\left\{ p_{i}\right\} $ and $q=\max_{i}\left\{ p_{i}\right\} 
$, and so on. Note in particular that the equation in (\ref{double phase
equation}) satisfies the $p,q-$growth conditions stated in (\ref{ellipticity
in the introduction}),(\ref{growth in the introduction}).

A special consideration about the recent interesting article by De
Filippis-Mingione \cite{DeFilippis-Mingione Inv 2023}, related to the
regularity of local minimizers of a class of integrals of the calculus of
variations which do not necessarily satisfy the Euler-Lagrange equation. In
order to allow  a comparison with the context considered here, since in \cite[Section  2.2]{DeFilippis-Mingione Inv 2023}  the energy integral is expressed in a {\em  splitting separated sum}  w.r.t. the gradient dependence and on the $u-$dependence, we discuss a  particular case of (\ref{elliptic equation}), 
\begin{equation}
	\sum_{i=1}^{n}\frac{\partial }{\partial x_{i}}\left\{ a^{i}\left(Du\right) \right\} =b\left( x,u\right) \,,
	\label{model example in the introduction}
\end{equation}%
where the vector field $\left\{ a^{i}\left( \xi \right) \right\}
_{i=1,2,\ldots ,n}$ in the left hand side does not explicitly depend  on $x$ and  $u$. In \cite[%
Section 2.2]{DeFilippis-Mingione Inv 2023} DeFilippis-Mingione allow a
very general $x-$dependence of the right hand side $b\left( x,u\right) $; in
fact they assume that the primitive $h\left( x,u\right) =\int_{0}^{u}b\left(
x,t\right) \,dt$ is a Carath\'{e}odory function, with $h\left( x,\cdot
\right) $ H\"{o}lder continuous and $h\left( \cdot ,u\right) $ in a suitable
Lorentz class for every fixed $u$. The opposite situation here: 
the vector field $\left\{ a^{i}\left(x,u,\xi \right) \right\}
_{i=1,2,\ldots ,n}$ depends on $(x,u)$ too, with  more strict
assumptions on the $x-$ and $u-$dependence 
 in
the right hand side $b\left( x,u,\xi \right) $, which here may also depend
on the gradient variable $\xi $. 
 Other differences are in force; for
instance in the context of this paper we allow general elliptic equations
without the symmetric assumption $\frac{\partial a^{i}}{\partial \xi _{j}}=%
\frac{\partial a^{j}}{\partial \xi _{i}}$, see (\ref{growth 1}) below.

Zhikov \cite{Zhikov 1987},\cite{Zhikov 1995} first studied similar kinds of
integrals with general growth in the context of \textit{homogenization} and
the \textit{Lavrentiev's phenomenon} (in this context see also the recent
article \cite{DeFilippis-Leonetti-Marcellini-Mascolo 2023}) More recent
contributions to regularity to minimizers of \textit{double phase integrals}
and for weak solutions to \textit{double phase equations} are due to Baroni,
Colombo and Mingione in \cite{Colombo-Mingione 2015},\cite%
{Baroni-Colombo-Mingione 2018}. \textit{Double phase} and \textit{variable
exponents} are studied by Byun-Oh \cite{Byun-Oh 2020}, Ragusa-Tachikawa \cite%
{Ragusa-Tachikawa 2020}, Fang-R\u{a}dulescu-Chao Zhang-Xia Zhang \textsc{%
\cite{Fang-Radulescu-Zhang-Zhang 2022},} Papageorgiou-R\u{a}dulescu-Zhang 
\cite{Papageorgiou-Radulescu-Zhang 2022},
Amoroso-Crespo-Blanco-Pucci-Winkert \cite%
{Amoroso-Crespo-Blanco-Pucci-Winkert 2023}, Arora-Fiscella-Mukherjee-Winkert 
\cite{Arora-Fiscella-Mukherjee-Winkert 2023}. In the context of \textit{%
variable exponents} and \textit{Orlicz--Sobolev spaces} we mention Mih\u{a}%
ilescu-Pucci-R\u{a}dulescu \cite{Mihailescu-Pucci-Radulescu 2008}, Zhang-R%
\u{a}dulescu \cite{Zhang-Radulescu 2018}, Ragusa-Razani-Safari \cite%
{Ragusa-Razani-Safari 2021}. For \textit{Orlicz--Sobolev spaces} see the
Springer Lecture Notes by Diening-Harjulehto-Hasto-Ruzicka \cite%
{Diening-Harjulehto-Hasto-Ruzicka 2011}, the reference paper by Chlebicka 
\cite{Chlebicka 2018}; see also Chlebicka-DeFilippis \cite%
{Chlebicka-DeFilippis 2019}, H\"{a}st\"{o}-Ok \cite{Hasto-Ok 2022}. About%
\textit{\ quasiconvex integrals} of the calculus of variations under general
growth conditions see in particular Cupini-Leonetti-Mascolo \cite%
{Cupini-Leonetti-Mascolo 2017} and \cite%
{Boegelein-Dacorogna-Duzaar-Marcellini-Scheven 2020},\cite%
{Dacorogna-Marcellini 1998},\cite{Marcellini 1984}; about partial
regularity, after Schmidt \cite{Schmidt 2009} more recently see DeFilippis 
\cite{De Filippis JMPA 2022} and DeFilippis-Stroffolini \cite{De
Filippis-Stroffolini 2023}.

Recently many authors obtained new regularity results, mainly in local
boundedness, higher summability, local Lipschitz continuity, $C^{1,\alpha }$%
. Most of the results deal with interior regularity, apart from
Cianchi-Maz'ya \cite{Cianchi-Mazya 2011},\cite{Cianchi-Mazya 2014}, B\"{o}%
gelein-Duzaar-Marcellini-Scheven \cite{Boegelein-Duzaar-Marcellini-Scheven
JMPA 2021}, DeFilippis-Piccinini \cite{Defilippis-Piccinini 2022-2023}, who
proved Lipschitz continuity of the weak solution up to the boundary. About
interior regularity for nonuniformly elliptic energy integrals with $p,q-$%
growth and general growth we quote for instance \cite{Bella-Schaffner 2020},%
\cite{Cupini-Marcellini-Mascolo 2017},\cite{Cupini-Marcellini-Mascolo 2018},%
\cite{Cupini-Marcellini-Mascolo-Passarelli 2023}; moreover Adimurthi-Tewary 
\cite{Adimurthi-Tewary 2022} and \cite{Eleuteri-Marcellini-Mascolo-Perrotta
2022},\cite{Eleuteri-Passarelli 2023},\cite{Zhang-Li 2023}. See also the
references in the review articles \cite{Marcellini 2020},\cite%
{Mingione-Radulescu 2021}.

What seems in the literature non often studied is the case when the
differential equation is not the Euler's first variation of an energy
integral. Not always the vector field $a\left( x,u,\xi \right) =\left(
a^{i}\left( x,u,\xi \right) \right) _{i=1,\ldots ,n}$ in the left hand side
of the equation (\ref{elliptic equation}) is the gradient, with respect to
the variable $\xi \in \mathbb{R}^{n}$, of a function $f\left( x,u,\xi
\right) $; on the contrary in the literature on this subject often the
condition $a^{i}=\partial f/\partial \xi _{i}$ , which simplifies the
framework, is one of the main assumption. If $f$ is of class $C^{2}$ in $\xi 
$ then this variational assumption $a^{i}=\partial f/\partial \xi _{i}$
implies that 
\begin{equation*}
\frac{\partial a^{i}}{\partial \xi _{j}}=\frac{\partial ^{2}f}{\partial \xi
_{i}\partial \xi _{j}}=\frac{\partial ^{2}f}{\partial \xi _{j}\partial \xi
_{i}}=\frac{\partial a^{j}}{\partial \xi _{i}},
\end{equation*}%
so that the $n\times n$ matrix $\left( \frac{\partial a^{i}}{\partial \xi
_{j}}\right) $ is symmetric. In this manuscript \textit{we do not assume
that this matrix is symmetric}. Instead we assume a condition of the type
(when $u$ is bounded; see more precisely below in (\ref{growth 1}))%
\begin{equation}
\left\vert \frac{\partial a^{i}}{\partial \xi _{j}}-\frac{\partial a^{j}}{%
\partial \xi _{i}}\right\vert \leq M\left( 1+\left\vert \xi \right\vert
^{2}\right) ^{\frac{p+q-4}{4}},  \label{not symmetry}
\end{equation}%
which requires an \textit{``intermediate growth"} with respect to $%
\left\vert \xi \right\vert $ (in fact the exponent $\frac{p+q-4}{2}$ is 
\textit{``intermediate"}, i.e. it is the average between $p-2$ and $q-2$) of
the antisymmetric terms of the matrix $\left( \frac{\partial a^{i}}{\partial
\xi _{j}}\right) $. Condition (\ref{not symmetry}) is automatically
satisfied, for instance, either when the vector field $\left( a^{i}\left(
x,u,\xi \right) \right) _{i=1,\ldots ,n}$ has the variational structure $%
a^{i}=\partial f/\partial \xi _{i}$, or if $p=q$. This second possibility
explains why in the literature the assumption (\ref{not symmetry}) is not
considered under the so called \textit{natural growth conditions }(i.e.,
when $p=q$); in fact if $p=q$ then (\ref{not symmetry}) is an elementary
consequence of the triangular inequality and of the growth assumption (\ref%
{growth in the introduction}) when $u$ is bounded.

An example can be easily constructed by adding in the left hand side of the
differential equation (\ref{double phase equation}) \textit{a perturbation
of the }$r-$\textit{Laplacian}; i.e. an operator of the type 
\begin{equation*}
\sum_{i=1}^{n}\frac{\partial }{\partial x_{i}}\left\{ a_{0}\left( x\right)
\left( \left( \left( 1+\left\vert Du\right\vert ^{2}\right) ^{\frac{r}{2}%
-1}\right) u_{x_{i}}+g^{i}\left( Du\right) \right) \right\} \,,
\end{equation*}%
with not symmetric and sufficiently small matrix $\left( \frac{\partial g^{i}%
}{\partial \xi _{j}}\right) $, in order to maintain the ellipticity of this
operator. If $p\leq r\leq \frac{p+q}{2}$ then the vector field, obtained by 
\textit{the sum of the two operators}, is $p-$elliptic, has $q-$growth,
satisfies condition (\ref{not symmetry}), but has not a variational
structure, in the sense that it is not the Euler's first variation of an
energy integral.

The right hand side $b$ in (\ref{elliptic equation}) may explicitly depend
on all variables $\left( x,u,\xi \right) \in \Omega \times \mathbb{R}\times 
\mathbb{R}^{n}$. Our growth assumption for $b$ is 
\begin{equation}
\left\vert b\left( x,u,\xi \right) \right\vert \leq M\left( 1+\left\vert \xi
\right\vert ^{2}\right) ^{\frac{p+q-2}{4}}+M\left\vert u\right\vert ^{\delta
-1}+b_{0}\left( x\right) \,  \label{growth of b  in the introduction}
\end{equation}%
a.e.$\;x\in \Omega $, $1\leq \delta <\frac{np}{n-p}:=p^{\ast }$ (when $n>p$)
and $b_{0}\in L_{\mathrm{loc}}^{s_{0}}\left( \Omega \right) $ for some $%
s_{0}>n$. The general growth condition in the right hand side of (\ref%
{growth of b in the introduction}) explicitly depends on $\left( x,u,\xi
\right) $ and does not allow us to assume the summability considered for
instance by DeFilippis-Mingione \cite{DeFilippis-Mingione ARMA 2021} and by
DeFilippis-Piccinini \cite{Defilippis-Piccinini 2022-2023}, with $b_{0}$ in
the Lorentz space $L\left( n,1\right) \left( \Omega \right) $, but however
with $M=0$ in (\ref{growth of b in the introduction}), and with the
functional inclusion $L^{n+\varepsilon }\left( \Omega \right) \subset
L\left( n,1\right) \left( \Omega \right) \subset L^{n}\left( \Omega \right) $
for all $\varepsilon >0$. Note however that the \textit{Lebesgue summability 
}$b_{0}\in L_{\mathrm{loc}}^{s_{0}}\left( \Omega \right) $\textit{\ with }$%
s_{0}>n$ is naturally assumed in the literature for regularity, see for
instance Colombo-Figalli \cite[Theorem 1.1]{Colombo-Figalli 2014} and \cite[%
Theorem 1.1]{Boegelein-Duzaar-Marcellini-Scheven JMPA 2021}. Moreover it is
sharp: in fact the weaker condition $b_{0}\in L_{\mathrm{loc}}^{n}\left(
\Omega \right) $ is not sufficient for the local Lipschitz continuity of the
weak solution.

The principal part of the equation also depends on all variables $\left(
x,u,\xi \right) \in \Omega \times \mathbb{R}\times \mathbb{R}^{n}$ and the
growth assumptions for the variables $\left( x,u\right) $ are (see the
details in (\ref{growth 0-b}), (\ref{growth 2}))%
\begin{equation}
\left\vert \frac{\partial a^{i}}{\partial u}\right\vert \leq M\left( \left(
1+\left\vert \xi \right\vert ^{2}\right) ^{\frac{p+q-4}{4}}+\left\vert
u\right\vert ^{\beta -1}\right) \,,\;\;\;\;\;\;\left\vert \frac{\partial
a^{i}}{\partial x_{s}}\right\vert \leq M\left( \left\vert u\right\vert
\right) \left( 1+\left\vert \xi \right\vert ^{2}\right) ^{\frac{p+q-2}{4}},
\label{growth wrt (x,u)  in the introduction}
\end{equation}%
for $x\in \Omega $,$\ 0\leq \beta <\frac{n\left( p-1\right) }{n-p}:=\frac{p-1%
}{p}p^{\ast }$ and for every $\xi \in \mathbb{R}^{n}$, $i,s=1,2,\ldots ,n$.
With respect to the $x-$dependence we could expect a more general assumption
depending on a Sobolev summability; for instance of the type 
\begin{equation*}
\left\vert \frac{\partial a^{i}}{\partial x_{s}}\right\vert \leq M\left(
x,\left\vert u\right\vert \right) \left( 1+\left\vert \xi \right\vert
^{2}\right) ^{\frac{p+q-2}{4}},
\end{equation*}%
where, for $\left\vert u\right\vert \leq L$ with $L$ fixed, $M\in L_{\mathrm{%
loc}}^{r}\left( \Omega \right) $ for some $r>n$. In this case we believe
that the local Lipschitz continuity of the weak solution should be proved
with a bound on the ration $q/p$ as in (\ref{bound for the ratio q/p})
depending on $r$ too; precisely $\frac{q}{p}<1+\frac{1}{n}-\frac{1}{r}$.
This Lebesgue summability was introduced in a simpler context by
Eleuteri-Marcellini-Mascolo \cite{Eleuteri-Marcellini-Mascolo 2019} and
later considered by DeFilippis-Mingione \cite{DeFilippis-Mingione 2020} too.
Note that (\ref{growth wrt (x,u) in the introduction})$_{2}$ corresponds to $%
r=+\infty $.

This research takes its origin from the two author's papers \cite%
{Cupini-Marcellini-Mascolo 2023},\cite{Marcellini 2023}, where the local
boundedness and the local Lipschitz continuity of weak solutions of the
equation (\ref{elliptic equation}) was studied, although with different
assumptions in the two cases. The effort here was to find a unique set of
assumptions to get all these regularity properties at the same time; in the
meantime we also found the way to treat a more general context. The final
regularity results, the local Lipschitz continuity and the higher
differentiability, are stated in Section \ref{Section: Main results}; the
starting step for these regularity results, i.e. the local boundedness of
the weak solutions, is stated and proved in Section \ref{Section: Local
boundedness}.

In our opinion a relevant application of the regularity results proved in
this manuscript relies in the authors' forthcoming paper \cite%
{Cupini-Marcellini-Mascolo Leray-Lions 2024}, devoted to the \textit{%
existence} $-$ in the sense of Leray-Lions \cite{Leray-Lions 1965} $-$ of
weak solutions $W^{1,q}\left( \Omega \right) $ to the Dirichlet problem
associated to the elliptic differential equation (\ref{elliptic equation}):
an aspect which requires particular attention due to the $p,q-$context when $%
q\neq p$, with some differences and new difficulties compared to the
standard case $p=q$, and with a crucial application of the local regularity
estimates obtained here.

\section{Main results\label{Section: Main results}}

\subsection{Assumptions\label{Section: assumptions for the local Lipschitz
continuity}}

We study the elliptic equations (\ref{elliptic equation}) under the
following \textit{general growth conditions} on the gradient variable $\xi
=Du$, named $p,q-$\textit{conditions}. In order to state the assumptions on
vector field $a=a\left( x,u,\xi \right) =\left( a^{i}\left( x,u,\xi \right)
\right) _{i=1,\ldots ,n}$, we start by the \textit{ellipticity}, valid for
some exponents $p,q$ ($2\leq p\leq q<p+1$), a constant $m>0$ and for every $%
\lambda ,\xi \in \mathbb{R}^{n}$, $\left( x,u\right) \in \Omega \times 
\mathbb{R}$, 
\begin{equation}
\sum_{i,j=1}^{n}\frac{\partial a^{i}}{\partial \xi _{j}}\lambda _{i}\lambda
_{j}\geq m(1+\left\vert \xi \right\vert ^{2})^{\frac{p-2}{2}}\left\vert
\lambda \right\vert ^{2}\;.  \label{ellipticity}
\end{equation}%
For the same $p,q$, $\left( x,u,\xi \right) \in \Omega \times \mathbb{%
R\times R}^{n}$ and for all $i,j=1,2,\ldots ,n$ we consider the \textit{%
growth conditions} for a constant $M>0$,%
\begin{equation}
\left\vert \frac{\partial a^{i}}{\partial \xi _{j}}\right\vert \leq
M(1+\left\vert \xi \right\vert ^{2})^{\frac{q-2}{2}}+M\left\vert
u\right\vert ^{\alpha }\,,  \label{growth 0-a}
\end{equation}%
\textit{\ }%
\begin{equation}
\left\vert \frac{\partial a^{i}}{\partial u}\right\vert \leq M(1+\left\vert
\xi \right\vert ^{2})^{\frac{p+q-4}{4}}+M\left\vert u\right\vert ^{\beta
-1}\,,  \label{growth 0-b}
\end{equation}
\begin{equation}
0\leq \alpha \leq \frac{2\left( q-2\right) }{q-p+2}\quad \text{and}\quad
0\leq \beta <\frac{n\left( p-1\right) }{n-p}:=\frac{p-1}{p}p^{\ast }
\label{growth 0-bGC}
\end{equation}%
(respectively, as in Lemma \ref{Lemma 1} and in (\ref{e:stimaIIesponente})
below; the upper bound on $\alpha $ is a perturbation of the inequality $%
\alpha \leq q-2$, valid in the particular case $q=p$).

Moreover, for every open set $\Omega ^{\prime }$, whose closure is contained
in $\Omega ,$ and for every $L>0$, there exists a positive constant $M\left(
L\right) $ (depending on $\Omega ^{\prime }$ and $L$) such that, for every $%
x\in \Omega ^{\prime }$, $\xi \in \mathbb{R}^{n}$ and for$\;\left\vert
u\right\vert \leq L$,\textit{\ }%
\begin{equation}
\left\vert \frac{\partial a^{i}}{\partial \xi _{j}}-\frac{\partial a^{j}}{%
\partial \xi _{i}}\right\vert \leq M\left( L\right) (1+\left\vert \xi
\right\vert ^{2})^{\frac{p+q-4}{4}},  \label{growth 1}
\end{equation}%
\begin{equation}
\left\vert \frac{\partial a^{i}}{\partial x_{s}}\right\vert \leq M\left(
L\right) (1+\left\vert \xi \right\vert ^{2})^{\frac{p+q-2}{4}},
\label{growth 2}
\end{equation}%
$i,j,s=1,2,\ldots ,n$. We also assume 
\begin{equation}
\left\vert a\left( x,0,0\right) \right\vert \in L_{\mathrm{loc}}^{\gamma
}\,,\;\;\;\;\;\forall \;i=1,2,\ldots ,n  \label{growth 3}
\end{equation}%
for an exponent $\gamma >\frac{n}{p-1}$; this summability condition is due
to (\ref{(i) in the Section}),(\ref{(ii) in the Section}) below, with $%
s_{1}:=\gamma \frac{p-1}{p}>\frac{n}{p}$. In particular (\ref{growth 3}) is
satisfied if $a\left( x,0,0\right) $ is a constant vector, or more generally
if it is locally bounded in $\Omega $.

The Carath\'{e}odory function $b\left( x,u,\xi \right) $ in the right hand
side of (\ref{elliptic equation}), $b:\Omega \times \mathbb{R}\times \mathbb{%
R}^{n}\rightarrow \mathbb{R}$, for a nonnegative constant $M$ satisfies the
condition 
\begin{equation}
\left\vert b\left( x,u,\xi \right) \right\vert \leq M(1+\left\vert \xi
\right\vert ^{2})^{\frac{p+q-2}{4}}+M\left\vert u\right\vert ^{\delta
-1}+b_{0}\left( x\right) \,  \label{b locally bounded}
\end{equation}%
a.e.$\;x\in \Omega $, for the same exponents $p,q$, for $1\leq \delta <\frac{%
np}{n-p}:=p^{\ast }$ (see (\ref{e:stimaIIesponente})) and for $b_{0}\in L_{%
\mathrm{loc}}^{s_{0}}\left( \Omega \right) $ with $s_{0}>n$ (see (\ref%
{Section 6 - formula 11}),(\ref{second derivatives})).

\subsection{Statement of a first regularity result}

In the context of general growth conditions it is necessary to specify the
functional class where to look for weak solutions. For instance, if we are
studying a double phase equation as in (\ref{double phase equation}), then
this class can be defined by the functions in $W_{\mathrm{loc}}^{1,1}\left(
\Omega \right) $ which make finite the energy integral (\ref{double phase
integral (nondegenerate)}), or in the degenerate case the integral in (\ref%
{double phase integral}) which turns out to be an equivalent condition. More
generally, if the differential equation is the Euler's first variation of an
energy integral, then the natural class of functions were to look for a
minimizer is the class which makes finite the energy integral. More
difficult is the general case of a differential equation which is not the
Euler's first variation of an integral, like in the context considered in
this manuscript.

Under $p,q-$growth conditions with $p\leq q$ the Sobolev class $W_{\mathrm{%
loc}}^{1,q}\left( \Omega \right) $ is the natural class were to look for
solutions; see Definition \ref{aweaksolDEF} below, see also \cite[Section 3.1%
]{Marcellini 2023} for a discussion about this aspect. At this stage we also
use the summability $b_{0}\in L_{\mathrm{loc}}^{q\prime }\left( \Omega
\right) $, which is consequence of the assumption $b_{0}\in L_{\mathrm{loc}%
}^{s_{0}}\left( \Omega \right) $ with $s_{0}>n$; in fact, since $q\geq 2$, $%
s_{0}>n\geq 2\geq \frac{q}{q-1}:=q^{\prime }$. We note that in some cases,
by using the a-priori regularity result in $W_{\mathrm{loc}}^{1,\infty
}\left( \Omega \right) $, we can also show the existence of weak solutions
of the associated Dirichlet problems in the Sobolev class $W^{1,p}\left(
\Omega \right) \cap W_{\mathrm{loc}}^{1,q}\left( \Omega \right) $; see \cite[%
Section 4]{Marcellini 1991},\cite{Cupini-Marcellini-Mascolo 2014},\cite%
{Eleuteri-Marcellini-Mascolo 2019},\cite{Cupini-Marcellini-Mascolo
Leray-Lions 2024}. When the differential equation is the Euler's first
variation of an energy integral then we can look directly to minimizers. In
this case we refer for instance to the higher integrability results for
minimizers of energy integrals in \cite[Theorem 2.1 and Remark 2.1]%
{Marcellini JOTA 1996} and in the well known article by
Esposito-Leonetti-Mingione \cite{Esposito-Leonetti-Mingione 1999}.

A main step in the proof of the following Theorem \ref{final theorem} is the
local boundedness result for the weak solution obtained in Section \ref%
{Section: Local boundedness}. Therefore we adopt here the bound in (\ref%
{e:hpq}) 
\begin{equation}
\tfrac{q}{p}<1+\tfrac{1}{n}\,.  \label{assumption on p,q}
\end{equation}%
However, if the weak solution $u$\ to the PDE (\ref{elliptic equation}) is
a-priori also locally bounded, i.e. if a-priori $u\in W_{\mathrm{loc}%
}^{1,q}\left( \Omega \right) \cap L_{\mathrm{loc}}^{\infty }\left( \Omega
\right) $, then Theorem \ref{final theorem} also holds under the bound $%
\frac{q}{p}<1+\frac{2}{n}$ instead of (\ref{assumption on p,q}); see \cite[%
Theorem 3.3]{Marcellini 2023}.

\begin{theorem}
\label{final theorem}Under the ellipticity and growth conditions (\ref%
{ellipticity})-(\ref{b locally bounded}), if the exponents $p,q$, with  $2\le p\le q<p+1$, 
satisfy the bound $\frac{q}{p}<1+\frac{1}{n}$, then every weak solution $%
u\in W_{\mathrm{loc}}^{1,q}\left( \Omega \right) $\ to the differential
equation (\ref{elliptic equation}) is of class $W_{\mathrm{loc}}^{1,\infty
}\left( \Omega \right) \cap W_{\mathrm{loc}}^{2,2}\left( \Omega \right) $.
Moreover, for every open set $\Omega ^{\prime }$ compactly contained in $%
\Omega $, there exist constants $c,c^{\prime },\alpha ,\gamma >0$\
(depending on the $L^{\infty }\left( \Omega ^{\prime }\right) $ norm of $u$
and on the data, but not on $u$) such that, for every $\varrho $\ and $R$\
with $0<\rho <R$ and $B_{R}(x_{0})\subset \Omega ^{\prime }$, 
\begin{equation}
\left\Vert Du\left( x\right) \right\Vert _{L^{\infty }\left( B_{\varrho };%
\mathbb{R}^{n}\right) }\leq \left( \frac{c}{\left( R-\varrho \right) ^{n}}%
\int_{B_{R}}(1+\left\vert Du\left( x\right) \right\vert ^{2})^{\frac{p}{2}%
}\,dx\right) ^{\frac{\alpha }{p}}  \label{interpolation bound 2}
\end{equation}%
\begin{equation*}
\underset{\text{for }n>2}{=}\;\left( \frac{c}{\left( R-\varrho \right) ^{n}}%
\left\Vert (1+\left\vert Du\left( x\right) \right\vert ^{2})^{\frac{1}{2}%
}\right\Vert _{L^{p}\left( B_{R}\right) }^{p}\right) ^{\frac{2}{\left(
n+2\right) p-nq}}\,;
\end{equation*}%
for the $n\times n$ matrix $D^{2}u$ of the second derivatives of $u$ the
following estimates hold 
\begin{equation}
\int_{B_{\rho }}\left\vert D^{2}u\right\vert ^{2}\,dx\leq \frac{c}{\left(
R-\rho \right) ^{2}}\int_{B_{R}}(1+\left\vert Du\left( x\right) \right\vert
^{2})^{\frac{q}{2}}\,dx\,  \label{bound on second derivatives}
\end{equation}%
\begin{equation*}
\leq \frac{c^{\prime }}{\left( R-\rho \right) ^{2}}\left( \frac{1}{\left(
R-\varrho \right) ^{\gamma \vartheta \left( q-p\right) }}\int_{B_{R}}(1+%
\left\vert Du\left( x\right) \right\vert ^{2})^{\frac{p}{2}}\,dx\right) ^{%
\frac{\alpha q}{\vartheta p}}\,.
\end{equation*}
\end{theorem}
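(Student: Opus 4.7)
\medskip

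\noindent\textbf{Proof proposal.} The plan is to couple the local boundedness result of Section~\ref{Section: Local boundedness} with an \emph{a-priori} Lipschitz estimate and a second-derivative estimate obtained through a single Caccioppoli-type inequality for $Du$. Fix $\Omega^{\prime}\Subset\Omega$ and set $L:=\|u\|_{L^{\infty}(\Omega^{\prime})}$, which is finite by the local boundedness theorem (this is where $\tfrac{q}{p}<1+\tfrac{1}{n}$ is first used). Once $|u|\leq L$, every $u$-dependent term in \eqref{growth 0-a}--\eqref{growth 2} and \eqref{b locally bounded} becomes dominated by a constant times $(1+|\xi|^{2})^{(q-2)/2}$, $(1+|\xi|^{2})^{(p+q-4)/4}$ or $(1+|\xi|^{2})^{(p+q-2)/4}$ respectively, with constants depending only on $L$ and the data.

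First, I would derive a Caccioppoli-type estimate for $Du$ by the standard difference-quotient technique: for $s\in\{1,\dots,n\}$ and small $h$, test the discrete analogue of \eqref{elliptic equation} with $\tau_{s,-h}\!\bigl(\eta^{2}\tau_{s,h}u\bigr)$, where $\eta$ is a cut-off between $B_{\rho}$ and $B_{R}$. The $p$-ellipticity \eqref{ellipticity} produces the good quantity $\int\eta^{2}(1+|Du|^{2})^{(p-2)/2}|D\tau_{s,h}u|^{2}\,dx$. The right-hand side splits into: (i) the cut-off cross-term, controlled by \eqref{growth 0-a} and Young's inequality; (ii) the contribution from the antisymmetric part of $(\partial a^{i}/\partial\xi_{j})$, bounded via \eqref{growth 1} with weight $(1+|\xi|^{2})^{(p+q-4)/4}$; (iii) the $x$- and $u$-derivative terms handled by \eqref{growth 0-b}, \eqref{growth 2}; (iv) the right-hand side $b$, handled through \eqref{b locally bounded}, using $\delta<p^{*}$, the $L^{\infty}$ bound on $u$ and the $L^{s_{0}}_{\loc}$ assumption on $b_{0}$. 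Passing to $h\to 0$ and using the discrete Sobolev lemma yields $u\in W^{2,2}_{\loc}(\Omega)$ together with
\begin{equation*}
\int_{B_{\rho}}(1+|Du|^{2})^{\frac{p-2}{2}}|D^{2}u|^{2}\,dx\;\leq\;\frac{c}{(R-\rho)^{2}}\int_{B_{R}}(1+|Du|^{2})^{\frac{q}{2}}\,dx,
\end{equation*}
from which \eqref{bound on second derivatives} (the first inequality) follows since $p\geq 2$.

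Second, I would upgrade this Caccioppoli inequality into an $L^{\infty}$ bound on $Du$ through a Moser-type iteration: test the differentiated equation with the additional weight $(1+|Du|^{2})^{\gamma}$, combine with the Sobolev embedding $W^{1,p}\hookrightarrow L^{p^{*}}$, and iterate along an exponentially increasing sequence of exponents on a nested family of balls. The gap condition $\tfrac{q}{p}<1+\tfrac{1}{n}$ is precisely what guarantees that the iteration is contractive and that the limiting exponent is finite; the computation produces exactly the exponent $\alpha/p=2/\bigl((n+2)p-nq\bigr)$ appearing in \eqref{interpolation bound 2}. Feeding the resulting $L^{\infty}$ bound on $Du$ back into the Caccioppoli inequality and writing $(1+|Du|^{2})^{q/2}\leq(1+\|Du\|_{\infty}^{2})^{(q-p)/2}(1+|Du|^{2})^{p/2}$ yields the second (interpolated) form of \eqref{bound on second derivatives}.

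The main obstacle is step~(ii): absent the symmetry $\partial a^{i}/\partial\xi_{j}=\partial a^{j}/\partial\xi_{i}$ we cannot integrate by parts cleanly after differentiating the equation, and the antisymmetric residue has \emph{a priori} the wrong growth rate. Assumption \eqref{growth 1} is calibrated exactly so that its contribution factorizes via
\begin{equation*}
(1+|\xi|^{2})^{\frac{p+q-4}{4}}|D^{2}u|\;=\;\bigl[(1+|\xi|^{2})^{\frac{p-2}{4}}|D^{2}u|\bigr]\cdot\bigl[(1+|\xi|^{2})^{\frac{q-2}{4}}\bigr],
\end{equation*}
so that Young's inequality absorbs the first factor into the elliptic term and leaves on the right only $(1+|\xi|^{2})^{(q-2)/2}$, which is already controlled by the $W^{1,q}_{\loc}$ assumption. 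A second delicate point is the bookkeeping during the Moser iteration, where the bound $q<p+1$ guarantees that the $b$-term and the interpolation exponents remain admissible at every step.
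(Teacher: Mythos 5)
Your overall strategy coincides with the paper's: first local boundedness via Theorem \ref{t:mainboundeness} (this is indeed where $\frac{q}{p}<1+\frac{1}{n}$ enters), then the reduction of the growth conditions on the set $\left\vert u\right\vert \leq L$, then weighted difference-quotient estimates in the spirit of \cite{Marcellini 2023} which give the Lipschitz bound by iteration and, for the unweighted choice of test function, the $W_{\mathrm{loc}}^{2,2}$ estimate; your way of obtaining the second form of (\ref{bound on second derivatives}) by feeding the $L^{\infty }$ gradient bound back into the Caccioppoli inequality produces the same exponent $\frac{\alpha q}{\vartheta p}$ as the paper's $L^{q}$--$L^{p}$ interpolation, so that variation is harmless.

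There is, however, a genuine gap exactly at the one point where the paper must go beyond \cite{Marcellini 2023}: the treatment of $b_{0}\in L_{\mathrm{loc}}^{s_{0}}\left( \Omega \right) $, $s_{0}>n$, inside the \emph{weighted} estimates. For the unweighted ($W^{2,2}$) step your argument is fine, since after Young's inequality the leftover $\int \eta ^{2}b_{0}^{2}\,dx$ is finite. But in the Moser-type iteration the test function carries the weight $g^{\prime }\left( \Delta _{h}u\right) \sim (1+\left\vert \Delta _{h}u\right\vert ^{2})^{\beta /2}$, and after Young's inequality one is left with $\int \eta ^{2}b_{0}^{2}\,g^{\prime }\left( \Delta _{h}u\right) \,dx$, which is not controlled by any quantity already at hand, and must be handled uniformly in $\beta $. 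The paper does this in (\ref{Section 6 - formula 5})--(\ref{new part 4}): H\"{o}lder with exponents $\frac{s_{0}}{2}$ and $\frac{s_{0}}{s_{0}-2}$, interpolation of the $L^{\frac{s_{0}}{s_{0}-2}}$ norm of $\eta ^{2}g^{\prime }$ between $L^{1}$ and $L^{\frac{n}{n-2}}$ (possible precisely because $s_{0}>n$), the Sobolev inequality applied to $\eta \left( g^{\prime }\right) ^{1/2}$ --- which reintroduces $\left\vert \Delta _{h}Du\right\vert ^{2}$ through $g^{\prime \prime }$ and requires the bound $\left\vert g^{\prime \prime }\right\vert /g^{\prime }\leq \frac{3}{2}\beta $ --- and finally absorption into the elliptic term with a parameter $\mu $ chosen small uniformly in $\beta $. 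Your proposal never addresses this term in the iteration, and the hypothesis you invoke there ($q<p+1$) is not the one that closes it: it is $s_{0}>n$ that does. Without this argument (or an equivalent one) the iteration you describe cannot be closed; the rest of your outline tracks the paper's proof.
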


The explicit expression of $\alpha $ in the gradient bound (\ref%
{interpolation bound 2}) is given by 
\begin{equation}
\alpha :=\frac{\vartheta \frac{p}{q}}{1-\vartheta \left( 1-\frac{p}{q}%
\right) }\;\underset{\text{for }n>2}{=}\;\frac{2p}{\left( n+2\right) p-nq}\,,
\label{alpha}
\end{equation}%
where $\vartheta \geq 1$ is the value $\vartheta :=\frac{2^{\ast }-2}{%
2^{\ast }\frac{p}{q}-2}\;\;\underset{\text{for }n>2}{=}\;\;\frac{2q}{%
np-\left( n-2\right) q}$ and $\gamma =\frac{n}{q}\vartheta $. Note that $%
\alpha \geq 1$ (in fact $\vartheta \frac{p}{q}\geq 1-\vartheta \left( 1-%
\frac{p}{q}\right) =1-\vartheta +\vartheta \frac{p}{q}$ is equivalent to $%
\vartheta \geq 1$) and, for the same reason, $\alpha =1$ if and only if $%
\vartheta =1$, which is equivalent to $q=p$.

\subsection{Statement of a second regularity result}

We may look for a $W_{\mathrm{loc}}^{1,\infty }\left( \Omega \right) -$%
regularity result when in the above growth assumptions the average exponent $%
\frac{p+q}{2}$, middle point between $p$ \ and $q$, is replaced by $q$; in
this case - a priori - we consider less restrictive assumptions. More
precisely, in view of the next Theorem \ref{final theorem 2}, compared with
Section \ref{Section: assumptions for the local Lipschitz continuity} we now
have the less restrictive growth conditions (obtained when in (\ref{growth
0-b}),(\ref{growth 2}),(\ref{b locally bounded}) $\frac{p+q}{2}$ is replaced
by $q$) 
\begin{equation}
\left\{ 
\begin{array}{l}
\left\vert \frac{\partial a^{i}}{\partial \xi _{j}}\right\vert \leq
M(1+\left\vert \xi \right\vert ^{2})^{\frac{q-2}{2}}+M\left\vert
u\right\vert ^{\alpha } \\ 
\left\vert \frac{\partial a^{i}}{\partial u}\right\vert \leq M(1+\left\vert
\xi \right\vert ^{2})^{\frac{q-2}{2}}+M\left\vert u\right\vert ^{\beta -1}
\\ 
\left\vert \frac{\partial a^{i}}{\partial x_{s}}\right\vert _{\;}\leq
M\left( L\right) (1+\left\vert \xi \right\vert ^{2})^{\frac{q-1}{2}} \\ 
\left\vert a^{i}\left( x,0,0\right) \right\vert \in L_{\mathrm{loc}}^{\gamma
^{\;}} \\ 
\left\vert b\left( x,u,\xi \right) \right\vert \leq M(1+\left\vert \xi
\right\vert ^{2})^{\frac{q-1}{2}}+M\left\vert u\right\vert ^{\delta
-1}+b_{0}\left( x\right) \,%
\end{array}%
\right.  \label{growth assumptions 2}
\end{equation}%
for every $\xi \in \mathbb{R}^{n}$, $i,j,s=1,2,\ldots ,n$, $u$ as before.
The parameters satisfy the bounds 
\begin{equation}
\left\{ 
\begin{array}{l}
0\leq \alpha \leq \frac{2q-p-2}{q-p+1} \\ 
0\leq \beta <\frac{n\left( p-1\right) }{n-p}=\frac{p-1}{p}p^{\ast } \\ 
\gamma >\frac{n}{p-1} \\ 
1\leq \delta <\frac{np}{n-p}=p^{\ast } \\ 
b_{0}\in L_{\mathrm{loc}}^{s_{0}}\left( \Omega \right) ,\;\;s_{0}>n\,.%
\end{array}%
\right. .  \label{growth assumptions 3}
\end{equation}%
When in (\ref{growth 1}) we replace $\frac{p+q}{2}$ by $q$ we get $%
\left\vert \frac{\partial a^{i}}{\partial \xi _{j}}-\frac{\partial a^{j}}{%
\partial \xi _{i}}\right\vert \leq M\left( L\right) (1+\left\vert \xi
\right\vert ^{2})^{\frac{q-2}{4}}$; we do not need to state it among the
other growth assumptions, since this time it is automatically satisfied as
consequence of (\ref{growth assumptions 2})$_{1}$ when $\left\vert
u\right\vert \leq L$.

As we said above, the following Theorem \ref{final theorem 2} holds with the
growth assumptions in (\ref{growth assumptions 2}), which are less
restrictive than those of Theorem \ref{final theorem}, however with a
stronger a-priori summability condition on the weak solution $u$, a more
restrictive bound for $\alpha $ in (\ref{growth assumptions 3})$_{1}$ and
for the quotient $\frac{q}{p}$.

\begin{theorem}
\label{final theorem 2}\textit{Under the ellipticity (\ref{ellipticity}) and
the }$p,q-$\textit{growth conditions (\ref{growth assumptions 2}),(\ref%
{growth assumptions 3}), if}   the exponents $p,q$, with  $2\le p\le q<p+2$, 
satisfy the bound $\frac{q}{p}<1+%
\frac{1}{2n}\,$, then every weak solution $u\in W_{\mathrm{loc}%
}^{1,2q-p}\left( \Omega \right) $ to the elliptic equation (\ref{elliptic
equation}) is of class $W_{\mathrm{loc}}^{1,\infty }\left( \Omega \right)
\cap W_{\mathrm{loc}}^{2,2}\left( \Omega \right) $ and, for every open set $%
\Omega ^{\prime }$ whose closure is contained in $\Omega ,$ there exist
constants $c,\alpha _{1},R_{0}>0$\ (depending on the $L^{\infty }\left(
\Omega ^{\prime }\right) $ norm of $u$ and on the data, but not on $u$) such
that, for every $\varrho $\ and $R$\ with $0<\rho <R$ and $%
B_{R}(x_{0})\subset \Omega ^{\prime }$, 
\begin{equation}
\left\Vert Du\left( x\right) \right\Vert _{L^{\infty }\left( B_{\varrho };%
\mathbb{R}^{n}\right) }\leq \left( \frac{c}{\left( R-\varrho \right) ^{n}}%
\int_{B_{R}}(1+\left\vert Du\left( x\right) \right\vert ^{2})^{\frac{p}{2}%
}\,dx\right) ^{\frac{\alpha _{1}}{p}}  \label{interpolation bound 3}
\end{equation}%
\begin{equation*}
\underset{\text{for }n>2}{=}\;\left( \frac{c}{\left( R-\varrho \right) ^{n}}%
\left\Vert (1+\left\vert Du\left( x\right) \right\vert ^{2})^{\frac{1}{2}%
}\right\Vert _{L^{p}\left( B_{R}\right) }^{p}\right) ^{\frac{1}{\left(
n+1\right) p-nq}}\,.
\end{equation*}%
Moreover the $L^{2}-$local estimates (\ref{bound on second derivatives}) for
the matrix $D^{2}u$ of the second derivatives hold when we replace $q$ by $%
2q-p$.
\end{theorem}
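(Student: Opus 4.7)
The plan is to parallel the strategy of Theorem \ref{final theorem}, carrying out each step with the exponent $q$ replacing the averaged exponent $(p+q)/2$ in the growth conditions. First, I would invoke the local boundedness result of Section \ref{Section: Local boundedness}: under (\ref{growth assumptions 2})--(\ref{growth assumptions 3}) together with the hypothesis $u\in W^{1,2q-p}_{\mathrm{loc}}(\Omega)$, the De Giorgi--type scheme of that section still goes through and yields $u\in L^\infty_{\mathrm{loc}}(\Omega)$, since the assumed $q/p<1+1/(2n)$ is strictly stronger than $q/p<1+1/n$ and the stronger a-priori integrability exactly compensates for the now looser growth on the right-hand side of the equation. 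With $u$ locally bounded, the $u$-dependence in (\ref{growth assumptions 2}) via $|u|^{\alpha}$ and $|u|^{\beta-1}$ is absorbed into the structural constant $M(L)$ with $L:=\|u\|_{L^\infty(\Omega')}$.

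Next I would differentiate equation (\ref{elliptic equation}) formally in the direction $e_s$, rigorously through a finite-difference argument since $a$ is only locally Lipschitz in $\xi$. Testing the differentiated equation with $\eta^2 u_{x_s}(1+|Du|^2)^{(\gamma-2)/2}$ for a cutoff $\eta\in C^1_c(B_R)$ and a parameter $\gamma\geq 0$, summing over $s\in\{1,\dots,n\}$, using (\ref{ellipticity}) to bound the principal term from below, and applying Young's inequality to the remaining terms with the help of (\ref{growth assumptions 2}), I expect the Caccioppoli-type inequality
\begin{equation*}
\int_{B_R}\eta^2(1+|Du|^2)^{\frac{p-2+\gamma}{2}}|D^2u|^2\,dx \leq \frac{C}{(R-\rho)^2}\int_{B_R}(1+|Du|^2)^{\frac{2q-p+\gamma}{2}}\,dx
\end{equation*}
to emerge, up to lower-order contributions controlled by $\|b_0\|_{L^{s_0}}$ and $L$. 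The antisymmetric-matrix terms that required the explicit hypothesis (\ref{growth 1}) in Theorem \ref{final theorem} are here controlled automatically, since the first inequality in (\ref{growth assumptions 2}) gives $|\partial a^i/\partial\xi_j-\partial a^j/\partial\xi_i|\leq 2M(1+|\xi|^2)^{(q-2)/2}$. The a-priori integrability $u\in W^{1,2q-p}_{\mathrm{loc}}$ is precisely what makes the base iterate $\gamma=2-p$ meaningful, and this base iterate also yields the $W^{2,2}_{\mathrm{loc}}$ bound corresponding to (\ref{bound on second derivatives}) with $q$ replaced by $2q-p$.

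Applying Sobolev's embedding $W^{1,2}\hookrightarrow L^{2^*}$ (for $n>2$) to $\eta(1+|Du|^2)^{(p+\gamma)/4}$, whose gradient is essentially controlled by the square root of the left-hand side of the Caccioppoli estimate, converts it into a reverse-H\"older-type self-improving inequality
\begin{equation*}
\left(\int_{B_\rho}(1+|Du|^2)^{\frac{(p+\gamma)2^*}{4}}\,dx\right)^{\frac{2}{2^*}} \leq \frac{C}{(R-\rho)^{\kappa}}\int_{B_R}(1+|Du|^2)^{\frac{2q-p+\gamma}{2}}\,dx.
\end{equation*}
A Moser-type iteration on a sequence of shrinking balls, with $\gamma$ increased at each step so that the right-hand exponent of step $k+1$ matches the left-hand exponent of step $k$, produces in the limit the $L^\infty$ bound (\ref{interpolation bound 3}) together with the quantitative exponent $\alpha_1$ stated there. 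The case $n=2$ is treated by replacing $2^*$ with an arbitrarily large Sobolev exponent.

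The main obstacle I anticipate is the closure of this iteration: the successive exponents grow geometrically only if the Sobolev gain dominates the $p,q$-gap $2q-p$ (rather than the $q$ that appears in the standard natural-growth setting), and the explicit threshold $q/p<1+1/(2n)$ in the statement is precisely the sharp condition under which the iterate exponents diverge to $+\infty$; a slightly milder restriction sufficed in Theorem \ref{final theorem} because there the gap was only $q-p$. The remaining work is a careful book-keeping of constants that parallels the proof of Theorem \ref{final theorem}, with the systematic substitution of $(p+q)/2$ by $q$ in all growth estimates and of $q$ by $2q-p$ in the a-priori Lebesgue exponent of $|Du|$.
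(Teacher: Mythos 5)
Your overall strategy (De Giorgi local boundedness first, then difference quotients, a weighted Caccioppoli inequality, Sobolev embedding and a Moser-type iteration) is indeed the machinery that ultimately lies behind this theorem, but the paper's actual proof of Theorem \ref{final theorem 2} is a short reduction rather than a re-run of the estimates: setting $\tilde q:=2q-p$, the hypotheses (\ref{growth assumptions 2}),(\ref{growth assumptions 3}) are precisely the hypotheses of Theorem \ref{final theorem} for the exponent pair $(p,\tilde q)$ --- the bound on $\partial a^{i}/\partial \xi _{j}$ is even weaker than needed since $\tilde q\geq q$, the restriction $\alpha \leq \frac{2q-p-2}{q-p+1}$ is exactly $\alpha \leq \frac{2(\tilde q-2)}{\tilde q-p+2}$, and the antisymmetry condition (\ref{growth 1}) becomes automatic, as you correctly note. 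Hence Lemma \ref{Lemma 1}, Theorem \ref{t:mainboundeness} and the whole Lipschitz and $W^{2,2}_{\mathrm{loc}}$ argument of Section 5 are applied verbatim with $q$ replaced by $2q-p$; this substitution is what produces the a-priori class $W^{1,2q-p}_{\mathrm{loc}}$, the bound $\frac{2q-p}{p}<1+\frac{1}{n}\Leftrightarrow \frac{q}{p}<1+\frac{1}{2n}$, the exponent $\alpha _{1}=\frac{p}{(n+1)p-nq}$ (i.e. (\ref{alpha}) with $q\mapsto 2q-p$, cf. (\ref{alpha'})), and the second-derivative estimate (\ref{bound on second derivatives}) with $2q-p$. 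Your direct redo is viable but repeats all of Section 5, whereas the substitution yields the theorem essentially for free once Theorem \ref{final theorem} is established; in particular your loose justification of the boundedness step (``stronger integrability compensates looser growth'') should be replaced by the precise verification that the assumptions of Section \ref{ss:ipotesi} hold for the pair $(p,2q-p)$, so that Theorem \ref{t:mainboundeness} applies to $u\in W^{1,2q-p}_{\mathrm{loc}}$.

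Two specific points in your sketch are off and should be fixed. First, the threshold $\frac{q}{p}<1+\frac{1}{2n}$ is not ``the sharp condition under which the iterate exponents diverge'': once $u$ is known to be locally bounded, the gradient iteration for the substituted problem closes under the weaker condition $\frac{2q-p}{p}<1+\frac{2}{n}$, i.e. $\frac{q}{p}<1+\frac{1}{n}$ (this is exactly the paper's remark following Theorem \ref{final theorem 2}); the binding constraint $1+\frac{1}{2n}$ comes from the local boundedness step, because (\ref{e:hpq}) must hold with $2q-p$ in place of $q$. Second, your bookkeeping of the base iterate is internally inconsistent: with the Caccioppoli inequality you wrote, the choice $\gamma =2-p$ produces a right-hand exponent $q-p+1$, not $\frac{2q-p}{2}$; the $W^{2,2}_{\mathrm{loc}}$ estimate of the statement (that is, (\ref{bound on second derivatives}) with $q\mapsto 2q-p$) comes from the unweighted case (in the paper's notation $g(t)=t$, exponent parameter equal to zero), and it is this step, with right-hand side $(1+|Du|^{2})^{\frac{2q-p}{2}}$, that actually uses the a-priori class $W^{1,2q-p}_{\mathrm{loc}}$, consistently with Definition \ref{aweaksolDEF} for the substituted exponent.
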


The exponent $\alpha _{1}$ in (\ref{interpolation bound 3}), when $n>2$, is
given by 
\begin{equation}
\alpha _{1}:=\;\frac{p}{\left( n+1\right) p-nq}\,.  \label{alpha'}
\end{equation}%
Similarly to $\alpha $ in (\ref{alpha}), also $\alpha _{1}$ is equal to $1$
if and only if $q=p$; note that $\alpha _{1}$ is well defined as a positive
real number since $\left( n+1\right) p-nq>0$, being this equivalent to the
bound: $\frac{q}{p}<1+\frac{1}{n}$. We recall again that Theorem \ref{final
theorem 2} holds for general differential equations (\ref{elliptic equation}%
) without the symmetric assumption $\frac{\partial a^{i}}{\partial \xi _{j}}=%
\frac{\partial a^{j}}{\partial \xi _{i}}$.

\begin{remark}
If a-priori we know that the weak solution is locally bounded, then Theorem %
\ref{final theorem} also holds under the weaker bound $\frac{q}{p}<1+\frac{2%
}{n}$, while Theorem \ref{final theorem 2} also holds under the bound $\frac{%
q}{p}<1+\frac{1}{n}$. The reason relies on the fact that in this case it is
not necessary to apply the local boundedness result of Theorem \ref%
{t:mainboundeness} and we can modify and adapt the method in \cite%
{Marcellini 2023} to the context considered here.
\end{remark}

\subsection{The classical case under the so-called natural growth conditions}

Finally we observe that all the results of this paper hold in the particular
case $q=p$, when the bound $\frac{q}{p}<1+\frac{1}{n}$ in (\ref{assumption
on p,q}) of course is satisfied. When $q=p$ the statements of the two
Theorems \ref{final theorem} and \ref{final theorem 2} coincide each other;
the ellipticity (\ref{ellipticity}) and the growth conditions (\ref{growth
assumptions 2}),(\ref{growth assumptions 3}) can be read with $q=p$ for the
validity of the regularity in $W_{\mathrm{loc}}^{1,\infty }\left( \Omega
\right) \cap W_{\mathrm{loc}}^{2,2}\left( \Omega \right) $ of the weak
solutions $u\in W_{\mathrm{loc}}^{1,p}\left( \Omega \right) $ to the
elliptic equation (\ref{elliptic equation}). We already observed that in the
gradient estimate (\ref{interpolation bound 2}) the exponent $\alpha $,
defined in (\ref{alpha}), is greater than or equal to $1$ and $\alpha =1$ if
and only if $q=p$. Therefore in this case the $L^{\infty }\left( \Omega
\right) -$gradient local bound (\ref{interpolation bound 2}) for $u$ takes
the form 
\begin{equation}
\left\Vert Du\left( x\right) \right\Vert _{L^{\infty }\left( B_{\varrho };%
\mathbb{R}^{n}\right) }\leq \left( \frac{c}{\left( R-\varrho \right) ^{n}}%
\int_{B_{R}}(1+\left\vert Du\left( x\right) \right\vert ^{2})^{\frac{p}{2}%
}\,dx\right) ^{\frac{1}{p}}\,,  \label{interpolation bound 2 for q=p}
\end{equation}%
while the $L^{2}\left( \Omega \right) -$local bound for the $n\times n$
matrix $D^{2}u$ of the second derivatives of $u$ is 
\begin{equation}
\int_{B_{\rho }}\left\vert D^{2}u\right\vert ^{2}\,dx\leq \frac{c}{\left(
R-\rho \right) ^{2}}\int_{B_{R}}(1+\left\vert Du\left( x\right) \right\vert
^{2})^{\frac{p}{2}}\,dx\,.  \label{bound on second derivatives for q=p}
\end{equation}

\section{Linking lemma}

As before we use the notation $a\left( x,u,\xi \right) =\left( a^{i}\left(
x,u,\xi \right) \right) _{i=1,\ldots ,n}$; moreover, as usual, $\left(
a\left( x,u,\xi \right) ,\xi \right) $ is the scalar product in $\mathbb{R}%
^{n}$ of $a\left( x,u,\xi \right) $ and $\xi $.

\begin{lemma}
\label{Lemma 1} Under the ellipticity condition (\ref{ellipticity}) and the
order-one growth conditions (\ref{growth 0-a}), (\ref{growth 0-b}) with $%
1<p\leq q<p+2$ and parameters $0\leq \alpha \leq \frac{ 2\left( q-2\right) }{%
q-p+2}$ and $0\leq \beta <\frac{n\left( p-1\right) }{n-p }:=\frac{p-1}{p}%
p^{\ast }$, the following coercivity and zero-order growth conditions hold:

(i) for some positive constants $c_{1},c_{2}$ and for all $x\in \Omega $, $%
u\in \mathbb{R}$ and $\xi \in \mathbb{R}^{n}$ 
\begin{equation}
\left( a\left( x,u,\xi \right) ,\xi \right) \geq c_{1}\left\vert \xi
\right\vert ^{p}-c_{2}\left\vert u\right\vert ^{\theta}-b_{1}\left( x\right)
\,,  \label{(i) in the Lemma}
\end{equation}
with 
\begin{equation}  \label{e:deftheta}
\theta:=\max\{\tfrac{2p}{p-q+2},\beta \tfrac{p}{p-1}\}
\end{equation}
and $b_{1}\left( x\right) :=const\cdot \left\{ 1+\left\vert a\left(
x,0,0\right) \right\vert ^{\frac{p}{p-1}}\right\} \in L_{\mathrm{loc}
}^{\gamma \frac{p-1}{p}}\,$ with $\gamma >\frac{n}{p-1}$. Recall that $%
\gamma $ is defined in (\ref{growth 3}) and note that the summability
condition here is the same as that one in the following Section \ref%
{ss:ipotesi}, with $s_{1}:=\gamma \frac{p-1}{p}$ and $L_{\mathrm{loc}%
}^{\gamma \frac{p-1}{p}}=L_{ \mathrm{loc}}^{s_{1}}$, $s_{1}>\frac{n}{p}$;

%
%

(ii) for some positive constants $c_{3},c_{4}$ and for all $x\in \Omega $, $%
u\in \mathbb{R}$, $\xi \in \mathbb{R}^{n}$ and the same $b_{1}\left(
x\right) $ 
\begin{equation}
\left\vert a\left( x,u,\xi \right) \right\vert \leq c_{3}\left\vert \xi
\right\vert ^{q-1}+c_{4}\left\vert u\right\vert ^{2\frac{q-1}{q-p+2}%
}+b_{1}\left( x\right) +1\,.  \label{(ii) in the Lemma}
\end{equation}
\end{lemma}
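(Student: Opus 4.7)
The plan is to reduce both (i) and (ii) to the infinitesimal hypotheses (\ref{ellipticity})--(\ref{growth 0-b}) via the fundamental theorem of calculus along straight-line paths in the $(u,\xi)$-plane, combined with Young's inequality at carefully chosen conjugate exponents. I would prove (ii) first and then use it (or the same FTC identity) to derive (i).

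For (ii), I would write
\[
a(x,u,\xi) = a(x,0,0) + \int_0^1 \Bigl(\tfrac{\partial a}{\partial u}(x,tu,t\xi)\,u + \sum_{j} \tfrac{\partial a}{\partial \xi_j}(x,tu,t\xi)\,\xi_j\Bigr)\,dt,
\]
insert the bounds (\ref{growth 0-a}), (\ref{growth 0-b}), and use $p\geq 2$ to monotonize the weights in $t$; this yields
\[
|a(x,u,\xi)|\leq |a(x,0,0)|+C\bigl[(1+|\xi|^2)^{(p+q-4)/4}|u|+|u|^{\beta}+(1+|\xi|^2)^{(q-2)/2}|\xi|+|u|^{\alpha}|\xi|\bigr].
\]
The third summand is already of the form $C(1+|\xi|^{q-1})$. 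To the mixed term I would apply Young's inequality with conjugate exponents $\bigl(\tfrac{2(q-1)}{p+q-4},\tfrac{2(q-1)}{q-p+2}\bigr)$, producing exactly the declared power $|u|^{2(q-1)/(q-p+2)}$ plus $\varepsilon|\xi|^{q-1}$. To $|u|^{\alpha}|\xi|$, Young's with $(q-1,\tfrac{q-1}{q-2})$ gives $|u|^{\alpha(q-1)/(q-2)}$, and the hypothesis $\alpha\leq \tfrac{2(q-2)}{q-p+2}$ is precisely what makes this exponent $\leq \tfrac{2(q-1)}{q-p+2}$. Subsuming $|a(x,0,0)|$ into $b_1(x)$ and the subdominant $|u|,|u|^{\beta}$ terms into the dominant $|u|$-power plus $1$ would yield (\ref{(ii) in the Lemma}).

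For (i), I would fix $(x,u)$ and integrate the ellipticity (\ref{ellipticity}) along $t\mapsto t\xi$:
\[
(a(x,u,\xi)-a(x,u,0),\xi) \geq m\int_0^1(1+t^2|\xi|^2)^{(p-2)/2}|\xi|^2\,dt \geq c_0|\xi|^p-c_0',
\]
valid for $p\geq 2$ via the substitution $s=t|\xi|$. Setting $\xi=0$ in the FTC of the previous step gives $|a(x,u,0)|\leq C(|a(x,0,0)|+|u|+|u|^{\beta})$, and Young's inequality with exponent $p$ applied to $|a(x,u,0)|\,|\xi|$ absorbs a small fraction of $|\xi|^p$ into $c_0|\xi|^p$ and leaves a remainder bounded by $C(|a(x,0,0)|^{p/(p-1)}+|u|^{p/(p-1)}+|u|^{\beta p/(p-1)})$. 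The definition of $b_1(x)$ swallows the first summand; the very definition $\theta=\max\{\tfrac{2p}{p-q+2},\tfrac{\beta p}{p-1}\}$ dominates both remaining $|u|$-powers (using $\tfrac{p}{p-1}\leq \tfrac{2p}{p-q+2}$, which follows from $q\geq 2$), yielding (\ref{(i) in the Lemma}).

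The proof is essentially bookkeeping, but the hard part will be matching the Young-exponent choices to the declared powers. The sharpness of the hypothesis $\alpha\leq \tfrac{2(q-2)}{q-p+2}$ is forced by the very exponent $\tfrac{2(q-1)}{q-p+2}$ one wishes to isolate in (ii); and in (i), an alternative route that bounded $|a(x,u,0)|$ via (ii) would instead create a $|u|$-exponent $\tfrac{2(q-1)p}{(q-p+2)(p-1)}$ which then has to be compared to $\theta$, an algebraic check equivalent to $(q-p)(q-p+2p-4)\geq 0$ and thus valid under $p\geq 2,\ q\geq p$. Either route requires verifying that all the spurious $|u|$-powers generated by Young are absorbed into the single $|u|^{\theta}$ term and that no constraint beyond those already stated on $\alpha$, $\beta$ is needed.
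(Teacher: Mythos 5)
Your proposal is essentially correct and, for part \textit{(ii)}, it coincides with the paper's own argument: the same fundamental-theorem identity along the ray $t\mapsto(tu,t\xi)$ as in (\ref{consequences 2}), the same Young pairs $\bigl(\tfrac{2(q-1)}{p+q-4},\tfrac{2(q-1)}{q-p+2}\bigr)$ and $\bigl(q-1,\tfrac{q-1}{q-2}\bigr)$, and the same use of $\alpha\le\tfrac{2(q-2)}{q-p+2}$ as in (\ref{consequences 4}). For part \textit{(i)} your route is mildly but genuinely different. The paper keeps $u$ and $\xi$ coupled on the joint ray, so the ellipticity term and the $a_u$-term sit in one integral; it must then split the weight $(1+|t\xi|^2)^{\frac{p+q-4}{4}}$ into a $p$-half absorbed by (\ref{e:stimaprinc}) and a $q$-half, set $t=1$ there and apply Young with $\bigl(\tfrac{p}{q-2},\tfrac{p}{p-q+2}\bigr)$, which is where $q<p+2$ enters and where the power $\tfrac{2p}{p-q+2}$ in $\theta$ comes from (see (\ref{e:stimaellitt1})). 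You instead integrate the ellipticity in $\xi$ at fixed $u$ and bound $|a(x,u,0)|$ separately along the $u$-ray at $\xi=0$; this decoupling avoids the $t=1$ device and only produces the milder powers $|u|^{\frac{p}{p-1}}$ and $|u|^{\beta\frac{p}{p-1}}$, so your version of \textit{(i)} is in fact slightly stronger and $q<p+2$ is only needed for $\theta$ in (\ref{e:deftheta}) to make sense. Two small corrections: the inequality $\tfrac{p}{p-1}\le\tfrac{2p}{p-q+2}$ is equivalent to $p+q\ge4$, not to $q\ge 2$ alone (it can fail for $1<p<2$), so invoke the standing assumption $p\ge2$; and your side computation for the alternative route is right, the comparison indeed reduces to $(q-p)(p+q-4)\ge 0$.

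The one step you should not gloss over is the absorption, in \textit{(ii)}, of the term $|u|^{\beta}$ (coming from the $M|u|^{\beta-1}$ part of (\ref{growth 0-b})) into $c_4|u|^{2\frac{q-1}{q-p+2}}+1$: this requires $\beta\le\tfrac{2(q-1)}{q-p+2}$, which the hypotheses do not guarantee, since only $\beta<\tfrac{n(p-1)}{n-p}$ is assumed and already for $q=p$ one has $\tfrac{2(q-1)}{q-p+2}=p-1<\tfrac{n(p-1)}{n-p}$. The paper's own proof has exactly the same lacuna (the $M|u|^{\beta}$ term present in (\ref{consequences 2}) disappears in (\ref{consequences 3}) without comment), so this is a wrinkle of the statement rather than of your route; the harmless fix is to keep an extra $c|u|^{\beta}$ in (\ref{(ii) in the Lemma}), i.e.\ to replace the exponent by $\max\bigl\{\tfrac{2(q-1)}{q-p+2},\beta\bigr\}$, or to add the hypothesis $\beta\le\tfrac{2(q-1)}{q-p+2}$. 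A last shared technicality: the estimate $\int_0^1|tu|^{\beta-1}|u|\,dt\le c\,|u|^{\beta}$ needs $\beta\ge1$ (for $0<\beta<1$ the constant is $1/\beta$, and $\beta=0$ is degenerate), so either state that restriction or handle it explicitly.
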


\begin{proof}
We start proving \textit{(i)}. We first observe that, for every $\varepsilon
>0$, by Young's inequality with conjugate exponents $\frac{p}{p-1}$ and $p$ 
\begin{equation}
\left\vert \left( a\left( x,0,0\right) ,\xi \right) \right\vert \leq
\left\vert a\left( x,0,0\right) \right\vert \left\vert \xi \right\vert \leq 
\tfrac{p-1}{p}\varepsilon ^{-\frac{p}{p-1}}\left\vert a\left( x,0,0\right)
\right\vert ^{\frac{p}{p-1}}+\frac{\varepsilon ^{p}}{p}\left\vert \xi
\right\vert ^{p}\,.  \label{consequences 0-a}
\end{equation}%
Moreover, with the component notation $\xi =\left( \xi _{i}\right)
_{i=1,\ldots ,n}$, we have 
\begin{equation}
\left( a\left( x,u,\xi \right) ,\xi \right) -\left( a\left( x,0,0\right)
,\xi \right)  \label{consequences 1}
\end{equation}%
\begin{equation*}
=\sum_{i=1}^{n}\left\{ a^{i}\left( x,u,\xi \right) -a^{i}\left( x,0,0\right)
\right\} \xi _{i}=\int_{0}^{1}\frac{d}{dt}\sum_{i=1}^{n}a^{i}\left(
x,tu,t\xi \right) \xi _{i}\,dt
\end{equation*}%
\begin{equation*}
=\int_{0}^{1}\left\{ \sum_{i=1}^{n}a_{u}^{i}\left( x,tu,t\xi \right) \xi
_{i}u+\displaystyle\sum_{i,j=1}^{n}a_{\xi _{j}}^{i}\left( x,tu,t\xi \right)
\xi _{i}\xi _{j}\right\} \,dt\,.
\end{equation*}%
By the growth condition (\ref{growth 0-b}) and the ellipticity assumption (%
\ref{ellipticity}) we get 
\begin{equation*}
\left( a\left( x,u,\xi \right) ,\xi \right) -\left( a\left( x,0,0\right)
,\xi \right)
\end{equation*}%
\begin{equation*}
\geq \int_{0}^{1}\left\{ -nM\left( (1+\left\vert t\xi \right\vert ^{2})^{%
\frac{p+q-4}{4}}\left\vert \xi \right\vert \left\vert u\right\vert
+\left\vert \xi \right\vert \left\vert u\right\vert ^{\beta }\right)
+m\left( 1+\left\vert t\xi \right\vert ^{2}\right) ^{\frac{p-2}{2}%
}\left\vert \xi \right\vert ^{2}\right\} dt
\end{equation*}%
\begin{eqnarray*}
&=&\int_{0}^{1}\left\{ -nM\left( (1+\left\vert t\xi \right\vert ^{2})^{\frac{%
p-2}{4}}\left\vert \xi \right\vert (1+\left\vert t\xi \right\vert ^{2})^{%
\frac{q-2}{4}}\left\vert u\right\vert +\left\vert \xi \right\vert \left\vert
u\right\vert ^{\beta }\right) \right. \\
&&\;\;\;\;\;\;\;\;\;\;\;\;\;\;\;\;\;\;\;\;\;\;\;\;\;\;\;\;\;\;\;\;\;\;\;\;%
\left. +\;m(1+\left\vert t\xi \right\vert ^{2})^{\frac{p-2}{2}}\left\vert
\xi \right\vert ^{2}\right\} dt
\end{eqnarray*}%
\begin{equation*}
\geq \int_{0}^{1}\left\{ -nM\left( \tfrac{\varepsilon ^{2}}{2}(1+\left\vert
t\xi \right\vert ^{2})^{\frac{p-2}{2}}\left\vert \xi \right\vert ^{2}+\tfrac{%
1}{2\varepsilon ^{2}}(1+\left\vert t\xi \right\vert ^{2})^{\frac{q-2}{2}%
}\left\vert u\right\vert ^{2}+\left\vert \xi \right\vert \left\vert
u\right\vert ^{\beta }\right) \right.
\end{equation*}%
\begin{equation*}
\;\;\;\;\;\;\;\;\;\;\;\;\;\;\;\;\;\;\;\;\;\;\;\;\;\;\;\;\;\;\;\;\;\;\;\;%
\left. +\;m(1+\left\vert t\xi \right\vert ^{2})^{\frac{p-2}{2}}\left\vert
\xi \right\vert ^{2}\right\} dt.
\end{equation*}%
For $\varepsilon >0$ sufficiently small we deduce that there exists a
constant $c>0$ such that 
\begin{equation*}
\left( a\left( x,u,\xi \right) ,\xi \right) -\left( a\left( x,0,0\right)
,\xi \right)
\end{equation*}%
\begin{equation*}
\geq \int_{0}^{1}\left\{ -c(1+\left\vert t\xi \right\vert ^{2})^{\frac{q-2}{2%
}}\left\vert u\right\vert ^{2}-nM\left\vert \xi \right\vert \left\vert
u\right\vert ^{\beta }+\frac{m}{2}(1+\left\vert t\xi \right\vert ^{2})^{%
\frac{p-2}{2}}\left\vert \xi \right\vert ^{2}\right\} dt.
\end{equation*}%
In the first addendum we take $t=1$ and we apply Young's inequality with
conjugate exponents $\frac{p}{q-2}$ and $\frac{p}{p-q+2}$ (here we use the
assumption $q<p+2$), while in the second addendum we consider the conjugate
exponents $p$ and $\frac{p}{p-1}$%
\begin{equation*}
\left( a\left( x,u,\xi \right) ,\xi \right) -\left( a\left( x,0,0\right)
,\xi \right)
\end{equation*}%
\begin{equation}
\geq -c\tfrac{q-2}{p}\varepsilon ^{\frac{p}{q-2}}(1+\left\vert \xi
\right\vert ^{2})^{\frac{p}{2}}-c\tfrac{p-q+2}{p\epsilon ^{\frac{p}{p-q+2}}}%
\left\vert u\right\vert ^{\frac{2p}{p-q+2}}  \label{e:stimaellitt1}
\end{equation}%
\begin{equation*}
-\tfrac{nM}{p}\varepsilon ^{p}\left\vert \xi \right\vert ^{p}-\tfrac{%
nM\left( p-1\right) }{p\,\varepsilon ^{\frac{p}{p-1}}}\left\vert
u\right\vert ^{\beta \frac{p}{p-1}}+\frac{m}{2}\left\vert \xi \right\vert
^{2}\int_{0}^{1}(1+\left\vert t\xi \right\vert ^{2})^{\frac{p-2}{2}}dt.
\end{equation*}%
If we \textit{\textquotedblleft just forget }$1+$\textit{"}; we have 
\begin{equation}
\frac{m}{2}\left\vert \xi \right\vert ^{2}\int_{0}^{1}(1+\left\vert t\xi
\right\vert ^{2})^{\frac{p-2}{2}}\,dt\geq \frac{m}{2}\left\vert \xi
\right\vert ^{p}\int_{0}^{1}t^{p-2}dt\,=\frac{m}{2(p-1)}\left\vert \xi
\right\vert ^{p}\,.  \label{e:stimaprinc}
\end{equation}%
Moreover, 
\begin{equation*}
|u|^{\beta \frac{p}{p-1}}+|u|^{\frac{2p}{p-q+2}}\leq \left\{ 
\begin{array}{ll}
2 & \text{if}\;\;\left\vert u\right\vert \leq 1 \\ 
2|u|^{\max \{\frac{2p}{p-q+2},\beta \frac{p}{p-1}\}} & \text{if}%
\;\;\left\vert u\right\vert >1,%
\end{array}%
\right.
\end{equation*}%
so that $|u|^{\beta \frac{p}{p-1}}+|u|^{\frac{2p}{p-q+2}}\leq 2+2\left\vert
u\right\vert ^{\theta }$, with $\theta $ defined in (\ref{e:deftheta}).
Therefore, for $\varepsilon >0$ sufficiently small, taking into account (\ref%
{consequences 0-a}),(\ref{e:stimaellitt1}),(\ref{e:stimaprinc}) we obtain
the statement in \textit{(i)}. 

\medbreak

To prove (\ref{(ii) in the Lemma}) in \textit{(ii)} we\ fix $i\in \left\{
1,2,\ldots ,n\right\} $ and we consider assumptions (\ref{growth 0-a}) and (%
\ref{growth 0-b}). With the notation $\alpha =2\frac{q-2}{q-p+2}$ we get 
\begin{equation}
\left\vert a^{i}\left( x,u,\xi \right) -a^{i}\left( x,0,0\right) \right\vert
=\left\vert \,\int_{0}^{1}\frac{d}{dt}a^{i}\left( x,tu,t\xi \right)
\,dt\,\right\vert  \label{consequences 2}
\end{equation}%
\begin{equation*}
=\left\vert \,\int_{0}^{1}\left\{ a_{u}^{i}\left( x,tu,t\xi \right) u+%
\displaystyle\sum_{j=1}^{n}a_{\xi _{j}}^{i}\left( x,tu,t\xi \right) \xi
_{i}\right\} dt\,\right\vert
\end{equation*}%
\begin{equation*}
\leq M\left( (1+\left\vert \xi \right\vert ^{2})^{\frac{p+q-4}{4}}\left\vert
u\right\vert +\left\vert u\right\vert ^{\beta }\right) +nM\left(
(1+\left\vert \xi \right\vert ^{2})^{\frac{q-2}{2}+\frac{1}{2}}+\left\vert
\xi \right\vert \left\vert u\right\vert ^{\alpha }\right) \,.
\end{equation*}%
Similarly as before, we use Young's inequality with conjugate exponents $2%
\frac{q-1}{p+q-4}$ and $2\frac{q-1}{q-p+2}$ in the first addendum and with
conjugate exponents $q-1$ and $\frac{q-1}{q-2}$ in the last addendum. We
obtain 
\begin{equation}
\left\vert a^{i}\left( x,u,\xi \right) -a^{i}\left( x,0,0\right) \right\vert
\label{consequences 3}
\end{equation}%
\begin{equation*}
\leq M\left( \tfrac{1}{2}\tfrac{p+q-4}{q-1}(1+\left\vert \xi \right\vert
^{2})^{\frac{q-1}{2}}+\tfrac{1}{2}\tfrac{q-p+2}{q-1}\left\vert u\right\vert
^{2\frac{q-1}{q-p+2}}\right)
\end{equation*}%
\begin{equation*}
+nM(1+\left\vert \xi \right\vert ^{2})^{\frac{q-1}{2}}+nM\left( \tfrac{1}{q-1%
}\left\vert \xi \right\vert ^{q-1}+\tfrac{q-2}{q-1}\left\vert u\right\vert
^{\alpha \frac{q-1}{q-2}}\right) \,.
\end{equation*}%
Recall that $0\leq \alpha \leq \frac{2\left( q-2\right) }{q-p+2}$ and thus $%
0\leq \alpha \frac{q-1}{q-2}\leq \frac{2\left( q-1\right) }{q-p+2}$. Then 
\begin{equation*}
\left\vert u\right\vert ^{\alpha \frac{q-1}{q-2}}\leq \left\{ 
\begin{array}{l}
1\;\;\;\;\;\text{if}\;\;\left\vert u\right\vert \leq 1 \\ 
\left\vert u\right\vert ^{\frac{2\left( q-1\right) }{q-p+2}}\;\;\;\text{if}%
\;\;\left\vert u\right\vert >1%
\end{array}%
\right.
\end{equation*}%
and in any case 
\begin{equation}
\left\vert u\right\vert ^{\alpha \frac{q-1}{q-2}}\leq \left\vert
u\right\vert ^{\frac{2\left( q-1\right) }{q-p+2}}+1\,.
\label{consequences 4}
\end{equation}%
By combining (\ref{consequences 3}),(\ref{consequences 4}) we obtain the
conclusion (\ref{(ii) in the Lemma}).
\end{proof}

\section{Local boundedness\label{Section: Local boundedness}}

In this section we prove a local boundedness result for weak solutions to
the elliptic equation (\ref{elliptic equation}). First we recall the
definition of weak solution. In fact, in the context of $p,q-$growth
conditions, $p<q$, it is necessary to use some care in choosing the Sobolev
class where to look for solutions; for a discussion about this aspect see 
\cite[Section 3.1]{Marcellini 2023} and Remark \ref{Remark about the
summability condition on b} below. In the following Definition \ref%
{aweaksolDEF} we look for solutions in the Sobolev space $W_{\mathrm{loc}%
}^{1,q}\left( \Omega \right) $.

\begin{definition}
\label{aweaksolDEF}A function $u\in W_{\mathrm{loc}}^{1,q}(\Omega )$ is a
weak solution to (\ref{elliptic equation}) if 
\begin{equation}
\int_{\Omega }\left\{ \sum_{i=1}^{n}a^{i}(x,u,Du)\varphi
_{x_{i}}+b(x,u,Du)\varphi \right\} \,dx=0  \label{aweaksol}
\end{equation}%
for all $\varphi \in W^{1,q}(\Omega )$ with $\mathrm{supp}\,\varphi \Subset
\Omega $.
\end{definition}

In order to obtain a local boundedness result for the weak solutions of (\ref%
{elliptic equation}) we consider the assumptions of Section \ref{Section:
assumptions for the local Lipschitz continuity} and their consequences as
stated in Lemma \ref{Lemma 1}. However it could be useful to list explicitly
here the hypotheses on the Carath\'{e}odory functions $a:\Omega \times 
\mathbb{R}\times \mathbb{R}^{n}\rightarrow \mathbb{R}^{n}$ and $b:\Omega
\times \mathbb{R}\times \mathbb{R}^{n}\rightarrow \mathbb{R}$ which we use
for proving the local boundedness, assumptions which sometime are less
restrictive than those of Section \ref{Section: assumptions for the local
Lipschitz continuity} above. These \textit{coercivity} and \textit{growth
assumptions} are stated in functions of some parameters $p,q,\theta ,\delta
,s_{0},s_{1}$. Before writing the bounds of these parameters, we remind that 
$p^{\ast }$ denotes the Sobolev exponent appearing in the Sobolev embedding
theorem for functions in $W^{1,p}(\Omega )$ with $\Omega $ bounded open set
in $\mathbb{R}^{n}$; i.e. $p^{\ast }:=\frac{np}{n-p}$ if $p<n$ and $p^{\ast
} $ equal any fixed real number greater than $p$ if $p\ge n$. In particular,
if $p\ge n$ we assume $s_{0},s_{1}>1$ and, without loss of generality, we
choose 
\begin{equation}
p^{\ast }>\max \left\{\tfrac{p}{p-q+1},\theta,\delta ,\tfrac{ps_{1}}{s_{1}-1}%
,\tfrac{ps_{0}}{s_{0}-1}\right\} .  \label{e:hppstar}
\end{equation}

If $p<n$ the conditions on the exponents $p,q,\theta ,\delta ,s_{0},s_{1}$
which we consider for the local boundedness of the weak solutions are 
\begin{equation}
\text{on the ratio }\tfrac{q}{p}\geq 1\text{: }\;\;\;\;\frac{q}{p}<1+\frac{1%
}{n}\,;  \label{e:hpq}
\end{equation}%
\begin{equation}
\text{on }\theta \geq 0\text{ and }\delta \geq 1\text{: }\;\;\;\;\theta
,\delta <\frac{np}{n-p}=p^{\ast }\,;  \label{e:stimaIIesponente}
\end{equation}%
\begin{equation}
\text{on }s_{0}\text{ and }s_{1}\text{: }\;\;\;\;s_{0},s_{1}>\frac{n}{p}\,.
\label{e:hpgamma}
\end{equation}

\subsection{Assumptions for the local boundedness\label{ss:ipotesi}}

We start by \textit{(i),(ii)} in Lemma \ref{Lemma 1}, in this slightly less
restrictive form (and, with abuse of notation, by changing $b_{1}\left(
x\right) +1$ with $b_{1}\left( x\right) $):

\textit{(i)} there exist $p,q$, $1\leq p\leq q<p+1$ and $\theta \geq 0$ and
some positive constants $c_{1},c_{2}$, such that for a.e. $x\in \Omega $ and
every $u\in \mathbb{R}$ and $\xi \in \mathbb{R}^{n}$ 
\begin{equation}
\left( a\left( x,u,\xi \right) ,\xi \right) \geq c_{1}\left\vert \xi
\right\vert ^{p}-c_{2}\left\vert u\right\vert ^{\theta }-b_{1}\left(
x\right) \,,  \label{(i) in the Section}
\end{equation}%
with $b_{1}\geq 0$ and $b_{1}\in L_{\mathrm{loc}}^{s_{1}}(\Omega )$ for some 
$s_{1}>\frac{n}{p}$ (here $s_{1}$ has the role stated in Lemma \ref{Lemma 1}%
: $s_{1}:=\gamma \frac{p-1}{p}>\frac{n}{p}$);

\textit{(ii)} for some positive constants $c_{3},c_{4}$, for a.e. $x\in
\Omega $, every $u\in \mathbb{R}$ and $\xi \in \mathbb{R}^{n}$ 
\begin{equation}
\left\vert a\left( x,u,\xi \right) \right\vert \leq c_{3}\left\vert \xi
\right\vert ^{q-1}+c_{4}\left\vert u\right\vert ^{\left( q-1\right) \frac{2}{%
q-p+2}}+b_{1}\left( x\right) \,  \label{(ii) in the Section}
\end{equation}%
with the same $b_{1}\in L_{\mathrm{loc}}^{s_{1}}(\Omega ) $ as in (\ref{(i)
in the Section});

\textit{(iii)} for the same exponents $p,q$, for $\delta \geq 1$ and for a
positive constant $M$ 
\begin{equation}
\left\vert b\left( x,u,\xi \right) \right\vert \leq M\left( 1+\left\vert \xi
\right\vert ^{2}\right) ^{\frac{p+q-2}{4}}+M\left\vert u\right\vert ^{\delta
-1}+b_{0}(x)  \label{b locally bounded in section}
\end{equation}%
with $b_{0}\ge 0$ and $b_{0}\in L_{\mathrm{loc}}^{s_{0}}(\Omega )$, with $%
s_{0}>\frac{n}{p}$.

\subsection{Statement of the local boundedness result}

The next local boundedness Theorem \ref{t:mainboundeness}, valid for weak
solutions to the differential equation (\ref{elliptic equation}), will be
used also in the proof of the local Lipschitz continuity Theorem \ref{final
theorem}.

\begin{theorem}
\label{t:mainboundeness} Let $u\in W_{\mathrm{loc}}^{1,q}(\Omega )$ be a
weak solution to (\ref{elliptic equation}) under the assumptions in Section %
\ref{ss:ipotesi} and (\ref{e:hppstar})--(\ref{e:hpgamma}). Consider $%
0<R_{0}\leq 1$ with $B_{R_{0}}(x_{0})\Subset \Omega $. Then there exists $%
\sigma >0$ such that 
\begin{equation}
\Vert u\Vert _{L^{\infty }(B_{r/2}(x_{0}))}\leq \frac{c}{r^{\frac{p}{\sigma
(p-q+1)}}}\left( 1+\Vert u\Vert _{L^{p^{\ast }}( B_r (x_{0}))}\right) ^{%
\frac{p^{\ast }-p}{\sigma }}  \label{e:stimalim}
\end{equation}%
for every positive $r\leq R_{0}$, where the constant $c$ depends on the $%
L^{s_{0}}$-norm of $b_{0}$ and the $L^{s_{1}}$-norm of $b_{1}$ in $B_{R_{0}}$
and it is independent of $u$.
\end{theorem}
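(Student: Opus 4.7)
My plan is to prove Theorem \ref{t:mainboundeness} by a De Giorgi / Stampacchia iteration adapted to the $(p,q)$-growth setting. By symmetry (the assumptions in Section \ref{ss:ipotesi} are invariant, up to renaming of the data, under $u \mapsto -u$), it suffices to bound $\mathrm{ess\,sup}\,u$ from above on $B_{r/2}(x_0)$, which I would do by working with the super-level sets $A(k,r) := B_r(x_0) \cap \{u > k\}$ for $k \ge 0$.

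\textbf{Step 1 (Caccioppoli estimate on super-level sets).} For concentric balls $B_\rho \subset B_R \subset B_{R_0}(x_0)$ and a cutoff $\eta \in C^1_c(B_R)$ with $\eta \equiv 1$ on $B_\rho$ and $|D\eta| \le 2/(R-\rho)$, I would test (\ref{aweaksol}) with $\varphi := \eta^q (u-k)_+$, which belongs to $W^{1,q}_0(B_R)$ because $u \in W^{1,q}_\mathrm{loc}$. Expanding $D\varphi$ and applying the coercivity (\ref{(i) in the Section}) to the principal term, I obtain
\begin{equation*}
c_1\!\int_{A(k,\rho)}\!|Du|^p\,dx \le C\!\int_{A(k,R)}\!\!\bigl(\eta^{q-1}|a|\,|D\eta|\,(u-k)_+ + |b|\eta^q(u-k)_+ + |u|^\theta + b_1\bigr)dx.
\end{equation*}
I then insert the growth (\ref{(ii) in the Section}) on $|a|$ and (\ref{b locally bounded in section}) on $|b|$. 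The two Young inequalities that must be used carefully are: on $|Du|^{q-1}|D\eta|(u-k)_+$ with conjugate exponents $p/(q-1)$ and $p/(p-q+1)$ (legal because $q<p+1$), and on $(1+|Du|^2)^{(p+q-2)/4}(u-k)_+$ with conjugate exponents $2p/(p+q-2)$ and $2p/(p-q+2)$ (legal because $q<p+2$). After absorbing the resulting $|Du|^p$-terms into the left-hand side, this produces a Caccioppoli-type inequality of the form
\begin{equation*}
\int_{B_\rho}|D(u-k)_+|^p\,dx \le \frac{C}{(R-\rho)^{\mu}}\!\int_{A(k,R)}\!\!\bigl((u-k)_+^{\nu}+|u|^{\Theta}+b_0+b_1+1\bigr)dx,
\end{equation*}
with $\mu=\nu := p/(p-q+1)$ and $\Theta := \max\{\theta,\delta,2(q-1)/(q-p+2)\}$, all of which are strictly below $p^*$ by (\ref{e:hpq})--(\ref{e:stimaIIesponente}) and (\ref{e:hppstar}).

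\textbf{Step 2 (Sobolev and Stampacchia iteration).} Applying Sobolev to $\eta(u-k)_+ \in W^{1,p}_0(B_R)$ and combining with the Caccioppoli inequality, then using H\"older on the right-hand side (crucially exploiting $s_0,s_1 > n/p$ to handle $b_0,b_1$), I would obtain
\begin{equation*}
\|(u-k)_+\|_{L^{p^*}(B_\rho)}^{p} \le \frac{C\bigl(1+\|u\|_{L^{p^*}(B_R)}\bigr)^{\tau}}{(R-\rho)^{\mu}}\,|A(k,R)|^{\sigma_0},
\end{equation*}
for suitable $\tau \ge 0$ and $\sigma_0 > 0$. The positivity of $\sigma_0$ is equivalent to $p^* > p/(p-q+1)$, which rearranges to $(n+1)p > nq$, i.e., to the hypothesis (\ref{e:hpq}). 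With this super-linear recursion in place, a standard Stampacchia iteration on the shrinking radii $r_j := r/2 + r/2^{j+1}$ and the increasing levels $k_j := k(1-2^{-j})$ forces $(u-k)_+\equiv 0$ on $B_{r/2}$ provided $k$ is chosen comparable to the right-hand side of (\ref{e:stimalim}); the explicit exponent $\sigma$ can then be read off from $\sigma_0$, $\mu$, and $p^*$.

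\textbf{Main obstacle.} The delicate point is Step 1: the $(p,q)$-mismatch forces the Young exponents $p/(p-q+1)$ and $2p/(p-q+2)$, and the whole argument rests on verifying that $p^*$ strictly exceeds both. A short computation shows the binding one is $p^*>p/(p-q+1)$, which collapses precisely to $q/p<1+1/n$; this explains the sharpness of (\ref{e:hpq}) for this scheme. Handling the lower-order perturbations $|u|^\theta$, $|u|^{\delta-1}$, and $|u|^{2(q-1)/(q-p+2)}$ is a secondary bookkeeping nuisance, but they are kept strictly under $p^*$ by the standing assumptions (\ref{e:stimaIIesponente}) and (\ref{e:hppstar}), so they feed into the iteration only through the factor $\|u\|_{L^{p^*}(B_R)}$, which is consistent with the right-hand side of (\ref{e:stimalim}).
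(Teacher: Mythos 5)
Your overall scheme (a Caccioppoli inequality on super-level sets, Sobolev embedding, a De Giorgi-type iteration on shrinking radii and rising levels, and symmetry for the bound from below) is exactly the paper's; the paper iterates $J_h=\int_{A_{k_h,\rho_h}}(u-k_h)^{p^{\ast}}dx$ and applies the classical recursion lemma, which is equivalent in substance to your Stampacchia formulation. There is, however, a genuine flaw in your Step 1: the test function $\varphi=\eta^{q}(u-k)_+$ does not allow the absorption to close when $p<q$. The dangerous term is the one produced by $\langle a,D\eta\rangle$, namely $\eta^{q-1}|D\eta|\,|Du|^{q-1}(u-k)_+$. If you apply Young's inequality with your exponents $\frac{p}{q-1}$ and $\frac{p}{p-q+1}$ keeping $\eta^{q-1}$ with the gradient factor, you generate $\varepsilon\,\eta^{(q-1)\frac{p}{q-1}}|Du|^{p}=\varepsilon\,\eta^{p}|Du|^{p}$; since $0\le\eta\le1$ and $p\le q$ one has $\eta^{p}\ge\eta^{q}$, so this term is \emph{not} dominated by the coercivity term $c_{1}\int\eta^{q}|Du|^{p}$ on the left and cannot be absorbed (nor is $\int_{A(k,R)}|Du|^{p}$ itself under control). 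If instead you split the powers of $\eta$ so that the Young inequality reproduces $\varepsilon\,\eta^{q}|Du|^{p}$, the complementary factor carries $\eta$ raised to the exponent $\frac{(q-1)(p-q)}{p-q+1}\le 0$, which is unbounded exactly where $D\eta\neq 0$ and $\eta$ degenerates to zero, so that term is not controlled either. In short, with the cutoff power $q$ the Caccioppoli inequality you state in Step 1 is not reachable once $p<q$.

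The missing ingredient is precisely the paper's choice (\ref{definizione-mu}): test with $\varphi=\eta^{\mu}(u-k)_+$ where $\mu=\frac{p}{p-q+1}$ (note $\mu\ge q$ when $p\le q$). This exponent satisfies $\mu-1=\mu\frac{q-1}{p}$, hence $\eta^{\mu-1}|Du|^{q-1}=\left(\eta^{\mu}|Du|^{p}\right)^{\frac{q-1}{p}}$, and Young's inequality with your exponents gives $\varepsilon\,\eta^{\mu}|Du|^{p}+C_{\varepsilon}|D\eta|^{\frac{p}{p-q+1}}(u-k)^{\frac{p}{p-q+1}}$, which absorbs cleanly and yields the $(R-\rho)^{-\frac{p}{p-q+1}}$ factor you anticipated. (Your second Young inequality, on $(1+|Du|^{2})^{\frac{p+q-2}{4}}(u-k)_+$, is harmless because the leftover $\eta$-exponent there is positive.) With that correction, your Step 2 runs as in the paper; only note that positivity of the iteration exponent requires not just $p^{\ast}>\frac{p}{p-q+1}$ (i.e. (\ref{e:hpq})) but also $\theta,\delta<p^{\ast}$ and $\frac{p^{\ast}}{s_{0}}+1,\ \frac{p^{\ast}}{s_{1}}+1<p^{\ast}$, i.e. $s_{0},s_{1}>\frac{n}{p}$, which you do record and which together give the paper's $\sigma$ in (\ref{e:defsigma}).
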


The explicit expression for $\sigma $ in (\ref{e:stimalim}) is 
\begin{equation}
\sigma :=p^{\ast }-\max \left\{ \tfrac{p}{p-q+1};\;\tfrac{p^{\ast }}{s_{1}}%
+1;\;\tfrac{p^{\ast }}{s_{0}}+1;\;\theta ;\;\delta \right\}
\label{e:defsigma}
\end{equation}%
and we note that $\sigma >0$; this is due to the bound (\ref{e:hppstar}) if $%
p\geq n$, and to the bounds (\ref{e:hpq}),(\ref{e:stimaIIesponente}),(\ref%
{e:hpgamma}) if $p<n$; in particular we notice that $\tfrac{p}{p-q+1}%
<p^{\ast }$ if and only if $\frac{q}{p}<1+\frac{1}{n}$, i.e. (\ref{e:hpq})
holds.

\begin{remark}[about the summability condition on $b$]
\label{Remark about the summability condition on b}We discuss the
summability of the integral form (\ref{aweaksol}) of the elliptic equation (%
\ref{elliptic equation}), in particular the correctness of the definition of
the pairing 
\begin{equation}
\int_{\Omega }b(x,u,Du)\,\varphi \left( x\right) \,dx\,.
\label{pairing with b}
\end{equation}%
Since $\varphi $ is a test function in $W^{1,q}\left( \Omega \right) $ with
compact support in $\Omega $, for the right hand side $b$, satisfying the
bound in (\ref{b locally bounded in section}), for the $x-$dependence it is
natural to assume the summability $b_{0}\in L_{\mathrm{loc}}^{q\prime
}\left( \Omega \right) $, with as usual $\frac{1}{q}+\frac{1}{q^{\prime }}=1$%
. This summability $b_{0}\in L_{\mathrm{loc}}^{q\prime }\left( \Omega
\right) $ is a consequence of the assumption in (\ref{b locally bounded}) $%
b_{0}\in L_{\mathrm{loc}}^{s_{0}}\left( \Omega \right) $ with $s_{0}>n$; in
fact, since $q\geq 2$, $s_{0}>n\geq 2\geq \frac{q}{q-1}:=q^{\prime }$.
However the local boundedness result in Theorem \ref{t:mainboundeness} is
obtained with the less restrictive bound in (\ref{b locally bounded in
section}) $b_{0}\in L_{\mathrm{loc}}^{s_{0}}\left( \Omega \right) $ with $%
s_{0}>\frac{n}{p}$. Therefore in this Section still we need to show that the
pairing (\ref{pairing with b}) is well defined. This fact is a consequence
of the imbedding of $W_{0}^{1,q}\left( \Omega \right) $ into $L^{q^{\ast
}}\left( \Omega \right) $; in fact the test function $\varphi \in L^{q^{\ast
}}\left( \Omega \right) $ and the integral (\ref{pairing with b}) is also
correctly defined if $b_{0}\in L_{\mathrm{loc}}^{(q^{\ast })^{\prime
}}\left( \Omega \right) $. Let us show that $b_{0}\in L_{\mathrm{loc}%
}^{s_{0}}\left( \Omega \right) \subset L_{\mathrm{loc}}^{(q^{\ast })^{\prime
}}\left( \Omega \right) $ when $s_{0}>\frac{n}{p}$ and $1\leq p<n$. Since $%
p\leq q$ then $p^{\ast }\leq q^{\ast }$ and 
\begin{equation*}
(q^{\ast })^{\prime }\leq (p^{\ast })^{\prime }=\frac{p^{\ast }}{p^{\ast }-1}%
=\frac{np}{np-n+p};
\end{equation*}%
this last quantity is less than or equal to $\frac{n}{p}$ if and only if $%
\frac{p}{np-n+p}\leq \frac{1}{p}$, which is equivalent to $p^{2}-\left(
n+1\right) p+n\leq 0$; i.e. $1\leq p\leq n$. Therefore $(q^{\ast })^{\prime
}\leq \frac{n}{p}<s_{0}$ and $L_{\mathrm{loc}}^{s_{0}}\left( \Omega \right)
\subset L_{\mathrm{loc}}^{(q^{\ast })^{\prime }}\left( \Omega \right) $.

If $p=n$ the inclusion $L_{\mathrm{loc}}^{s_{0}}\left( \Omega \right)
\subset L_{\mathrm{loc}}^{(q^{\ast })^{\prime }}\left( \Omega \right) $ is
trivially satisfied by assuming $q^*$ large enough, in dependence of $s_0>1$%
. If $p>n$ the test functions are bounded, therefore the pairing is well
defined because $b_0$ is locally summable.

As far as the $u-$dependence it is concerned, we remark that the definition
of the pairing is correct if $|u|^{\delta-1}\varphi\in L^1(\Omega)$. Since,
by Sobolev embedding Theorem $\varphi $ is in $L^{q^*}\left( \Omega \right) $
with compact support in $\Omega $, then we need $u\in L_{\mathrm{loc}%
}^{(\delta-1)\frac{q^*}{q^*-1}}(\Omega)$. By Sobolev embedding Theorem $u\in
L^{q^*}\left( \Omega \right)$, therefore the needed summability for $u$ is
satisfied if $(\delta-1)\frac{q^*}{q^*-1}\le q^*$, or equivalently, $%
\delta\le q^*$. This last condition holds, because $\delta< p^*$, see (\ref%
{e:stimaIIesponente}).

We conclude by considering the product $\left\vert Du \right\vert^{\frac{%
p+q-2}{2}}\varphi$ that we want to be summable in $\Omega$. Reasoning as
above, this happens if $\frac{p+q-2}{2}\frac{q^*}{q^*-1}\le q$. If $q\ge n$
it is sufficient to choose $q^*$ large enough, since $p<q+2$. If instead $q<
n$ then we observe that $\frac{p+q-2}{2}\frac{q^*}{q^*-1}\le q$ is
consequence of $p\le q$.
\end{remark}

\begin{remark}[about an upper bound of $\protect\theta $]
\label{r:theta<p*} In Section \ref{Section: Local Lipschitz continuity} we
will define $\theta $ as in (\ref{e:deftheta}). It is easy to check that if $%
\frac{q}{p}<1+\frac{2}{n}$, thus in particular if (\ref{e:hpq}) holds, then $%
\tfrac{2p}{p-q+2}<p^{\ast }$. Therefore the conditions $\frac{q}{p}<1+\frac{2%
}{n}\ \ $and$\ \ \beta <\frac{n\left( p-1\right) }{n-p}:=\frac{p-1}{p}%
p^{\ast }$ imply $\theta :=\max \{\tfrac{2p}{p-q+2},\beta \tfrac{p}{p-1}%
\}<p^{\ast }$.
\end{remark}

\subsection{The Caccioppoli's inequality}

\label{ss:caccio}

We prove a Caccioppoli's inequality for the weak solutions of (\ref{elliptic
equation}) under the assumptions stated in Section \ref{ss:ipotesi}.

\begin{proposition}
Let $u\in W_{\mathrm{loc}}^{1,q}(\Omega )$ be a weak solution to (\ref%
{elliptic equation}) under the assumptions in Section \ref{ss:ipotesi}.
Consider $B_{R_{0}}(x_{0})\Subset \Omega $ and for every $k\in \mathbb{R}$, $%
k\geq 0$, and every $R\leq R_{0}$ consider the super-level sets $%
A_{k,R}:=\{x\in B_{R}(x_{0})\,:\,u(x)>k\}$. Then there exists $c$ depending
only on the data, but neither on $u$ nor $k$, such that for every $\rho ,R$
such that $0< \rho <R\leq R_{0}\leq 1$, 
\begin{align}
& \int_{A_{k,\rho }}|Du|^{p}\,dx\leq \tfrac{c}{(R-\rho )^{\frac{p}{p-q+1}}}%
\int_{A_{k,R}}(u-k)^{\frac{p}{p-q+1}}\,dx  \notag \\
& +\,\tfrac{c}{R-\rho }\Vert b_{1}\Vert _{L^{s_{1}}(B_{R_{0}})}\Vert
u-k\Vert _{L^{p^{\ast }}(A_{k,R})}|A_{k,R}|^{1-\frac{1}{s_{1}}-\frac{1}{%
p^{\ast }}}  \notag \\
& +\,c\Vert b_{0}+1\Vert _{L^{s_{0}}(B_{R_{0}})}\Vert u-k\Vert _{L^{p^{\ast
}}(A_{k,R})}|A_{k,R}|^{1-\frac{1}{s_{0}}-\frac{1}{p^{\ast }}}  \notag \\
& +c\,\Vert u-k\Vert _{L^{p^{\ast }}(A_{k,R})}^{\frac{2p}{q-p+2}%
}|A_{k,R}|^{1-\frac{2p}{p^{\ast }(q-p+2)}}+c\,\Vert u-k\Vert _{L^{p^{\ast
}}(A_{k,R})}^{\theta }|A_{k,R}|^{1-\frac{\theta }{p^{\ast }}}  \notag \\
& +c\,\Vert u-k\Vert _{L^{p^{\ast }}(A_{k,R})}^{\delta }|A_{k,R}|^{1-\frac{%
\delta }{p^{\ast }}}+c\,(1+k^{\tau })|A_{k,R}|+c\Vert b_{1}\Vert
_{L^{s_{1}}(B_{R_{0}})}|A_{k,R}|^{1-\frac{1}{s_{1}}},  \label{e:fineIIIstep}
\end{align}%
where 
\begin{equation}
\tau :=\max \left\{ \tfrac{2p}{q-p+2},\theta ,\delta \right\} .
\label{e:expkappa}
\end{equation}
\end{proposition}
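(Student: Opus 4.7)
The plan is to derive this Caccioppoli-type estimate by testing the weak formulation with a truncation function of the form $\varphi=(u-k)_+\eta^{\gamma}$, where $\eta\in C^1_c(B_R(x_0))$ is a standard cutoff with $\eta\equiv 1$ on $B_\rho$, $0\le\eta\le 1$ and $|D\eta|\le c/(R-\rho)$. The crucial first decision is the choice of the exponent $\gamma:=p/(p-q+1)$, which is forced by the structure of the growth and ellipticity exponents: it is exactly the Young's-inequality conjugate that will allow the $|Du|^{q-1}$-boundary term coming from growth condition \textit{(ii)} to be split into an absorbable piece $\epsilon|Du|^p\eta^\gamma$ plus a clean $(R-\rho)^{-\gamma}(u-k)^{\gamma}$ piece (matching the first term on the right of \eqref{e:fineIIIstep}).

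After substituting $D\varphi=\chi_{A_{k,R}}Du\,\eta^\gamma+\gamma(u-k)_+\eta^{\gamma-1}D\eta$ into \eqref{aweaksol}, the identity reads
\begin{equation*}
\int_{A_{k,R}}(a(x,u,Du),Du)\eta^\gamma\,dx=-\gamma\int_{A_{k,R}}(a(x,u,Du),D\eta)(u-k)_+\eta^{\gamma-1}\,dx-\int_{A_{k,R}}b(x,u,Du)(u-k)_+\eta^\gamma\,dx.
\end{equation*}
I would then apply the coercivity \eqref{(i) in the Section} to the left-hand side, obtaining $c_1|Du|^p\eta^\gamma$ minus the $c_2|u|^\theta+b_1$ error terms, and estimate the two right-hand side integrals separately. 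For the first, plug in the bound \eqref{(ii) in the Section} for $|a|$: the main $|Du|^{q-1}\eta^{\gamma-1}|D\eta|(u-k)_+$ term is treated with Young's inequality of exponents $p/(q-1)$ and $p/(p-q+1)$, where the $\eta^{\gamma-1}$ factor is split as $\eta^{\gamma(q-1)/p}\cdot\eta^{\gamma(p-q+1)/p}$ so that the absorbable piece carries exactly $\eta^\gamma$ and the leftover piece carries $\eta^\gamma\le 1$ together with the desired $(R-\rho)^{-\gamma}(u-k)_+^\gamma$. The $b_1|D\eta|\eta^{\gamma-1}(u-k)_+$ summand produces the $(R-\rho)^{-1}\|b_1\|_{L^{s_1}}$-term after Hölder with exponents $s_1$, $p^\ast$, and the complementary one. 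The auxiliary $|u|^{2(q-1)/(q-p+2)}$ summand is handled by splitting $|u|\le(u-k)_++k$ on $A_{k,R}$ and using that $2(q-1)/(q-p+2)\le 2p/(q-p+2)\le \tau$ since $q-1\le p$.

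For the second right-hand side integral, I would split $|b|$ via \eqref{b locally bounded in section}. The gradient part $(1+|Du|^2)^{(p+q-2)/4}(u-k)_+\eta^\gamma$ is handled by Young's inequality with exponents $2p/(p+q-2)$ and $2p/(p-q+2)$ (legal because $q<p+2$), distributing $\eta^\gamma$ as before so that the absorbable part is $\epsilon|Du|^p\eta^\gamma$ and the remainder is $C_\epsilon(u-k)_+^{2p/(p-q+2)}\eta^\gamma$. The $|u|^{\delta-1}(u-k)_+$ term is split via $|u|\le (u-k)_++k$, producing $(u-k)_+^\delta$ plus $k^{\delta-1}(u-k)_+$, the latter absorbed into the $(u-k)_+^\delta+k^\delta$ bound by Young. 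The $b_0(u-k)_+$ term yields the $\|b_0+1\|_{L^{s_0}}$-summand directly via Hölder with $s_0,p^\ast$ and complementary exponent. Analogously the $|u|^\theta$ term from coercivity becomes $(u-k)_+^\theta+k^\theta$, and the $b_1\eta^\gamma$ term becomes the final $\|b_1\|_{L^{s_1}}|A_{k,R}|^{1-1/s_1}$ by Hölder.

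After choosing $\epsilon$ small to absorb all the $|Du|^p\eta^\gamma$-contributions into the left, using $\eta\equiv 1$ on $B_\rho$ to replace $\int_{A_{k,R}}|Du|^p\eta^\gamma$ by the larger quantity $\int_{A_{k,\rho}}|Du|^p$, and collecting the $k$-dependent residuals into $(1+k^\tau)|A_{k,R}|$ with $\tau$ as in \eqref{e:expkappa}, the required inequality \eqref{e:fineIIIstep} follows. The main obstacle is the bookkeeping in Step~3: one must carefully distribute the powers of $\eta$ in every Young inequality so that the absorbable term carries precisely $\eta^\gamma$ (no more, no less) while the conjugate term carries a non-negative power of $\eta\le 1$; this is exactly why the choice $\gamma=p/(p-q+1)$ is dictated, since the constraint $\gamma(p-q+1)/p\ge 0$ is saturated there, and any smaller $\gamma$ would force negative powers of $\eta$ in the leftover terms.
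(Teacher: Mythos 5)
Your plan coincides with the paper's own proof: the same test function $(u-k)_+\eta^{\mu}$ with $\mu=\frac{p}{p-q+1}$ (and the same justification, $\mu-1=\mu\frac{q-1}{p}$, for why this exponent makes the $\eta$-bookkeeping close), coercivity \eqref{(i) in the Section} on the left, growths \eqref{(ii) in the Section} and \eqref{b locally bounded in section} on the right, Young with the pairs $\big(\frac{p}{q-1},\frac{p}{p-q+1}\big)$ and $\big(\frac{2p}{p+q-2},\frac{2p}{p-q+2}\big)$, and H\"older with $s_1$, $s_0$, $p^{\ast}$ for the $b_1$- and $b_0$-terms.

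The one step that, as written, does not go through is your treatment of the summand $c_4|u|^{2\frac{q-1}{q-p+2}}$ from \eqref{(ii) in the Section}. It enters multiplied by $|D\eta|\,\eta^{\mu-1}(u-k)_+$, so after the split $|u|\le(u-k)_++k$ you are left with $\frac{c}{R-\rho}\int_{A_{k,R}}(u-k)^{2\frac{q-1}{q-p+2}+1}dx$ and $\frac{c}{R-\rho}\,k^{2\frac{q-1}{q-p+2}}\int_{A_{k,R}}(u-k)\,dx$. Comparing exponents with $\frac{2p}{q-p+2}\le\tau$ does not dispose of the factor $|D\eta|\sim(R-\rho)^{-1}$, which in \eqref{e:fineIIIstep} survives only in the first term and in the $b_1$-term; moreover, after the extra factor $(u-k)_+$ the relevant exponent is $2\frac{q-1}{q-p+2}+1=\frac{3q-p}{q-p+2}$, which exceeds $\frac{2p}{q-p+2}$ whenever $q>p$, so the claimed exponent bound is not even available. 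What is needed — and what the paper does in \eqref{e:|u|u1} and the display before it — is one further Young inequality with the same exponents $\frac{p}{q-1},\frac{p}{p-q+1}$, sending $|D\eta|(u-k)$ into the first right-hand term of \eqref{e:fineIIIstep} and the leftover powers into $(u-k)^{\frac{2p}{q-p+2}}$ and $k^{\frac{2p}{q-p+2}}$. Since you already use exactly this pairing for the $|Du|^{q-1}$-term, the fix is within your method, but it must be carried out. Two smaller bookkeeping points: the Young leftover $(u-k)^{\frac{2p}{p-q+2}}$ from the $b$-gradient term is not literally one of the terms in \eqref{e:fineIIIstep} when $q>p$ (note $\frac{2p}{p-q+2}\neq\frac{2p}{q-p+2}$), so you should absorb it, e.g.\ via $\frac{2p}{p-q+2}\le\frac{p}{p-q+1}$, $(u-k)^{\frac{2p}{p-q+2}}\le(u-k)^{\frac{p}{p-q+1}}+1$ and $R-\rho\le 1$, into the first term plus $c\,|A_{k,R}|$; and the final passage from the $(u-k)$-power integrals to the $\Vert u-k\Vert_{L^{p^{\ast}}(A_{k,R})}$-times-measure form requires the H\"older step with exponents $p^{\ast}$ over $\frac{2p}{q-p+2}$, $\theta$, $\delta$ (legitimate since these are all below $p^{\ast}$ by the standing assumptions), which your sketch leaves implicit.
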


\begin{proof}
We split the proof into steps.

\textit{Step 1. }Consider $B_{R_{0}}(x_{0})\Subset \Omega $, $0< \rho <R\leq
R_{0}\leq 1$. Let $\eta \in C_{0}^{\infty }(B_{R})$ be a cut-off function,
satisfying the following assumptions: 
\begin{equation}
0\leq \eta \leq 1,\quad \eta \equiv 1\ \text{in $B_{\rho }(x_{0})$,}\quad 
\text{$|D\eta |\leq \tfrac{2}{R-\rho }$.}  \label{eta}
\end{equation}%
For every $k\geq 1$ we define the test function $\varphi _{k}$ as follows 
\begin{equation*}
\varphi _{k}(x):=(u(x)-k)_{+}[\eta (x)]^{\mu }\quad \text{for a.e.}\ x\in
B_{R_{0}}(x_{0}),
\end{equation*}%
where $(u(x)-k)_{+}=\max \{u(x)-k,0\}$ and 
\begin{equation}
\mu :=\tfrac{p}{p-q+1}.  \label{definizione-mu}
\end{equation}%
Notice that $\mu $ is greater than $1$ because $q>1$ and that $\varphi
_{k}\in W_{0}^{1,q}(B_{R_{0}}(x_{0}))$, $\operatorname{supp}\varphi _{k}\Subset
B_{R}(x_{0})$.

%
\textit{Step 2. }Let us consider the super-level sets: $A_{k,R}:=\{x\in
B_{R}(x_{0})\,:\,u(x)>k\}$. Using $\varphi _{k}$ as a test function in (\ref%
{aweaksol}) we get 
\begin{equation}
\begin{aligned} I_1:=& \int_{A_{k,R}} \langle a(x,u,Du),Du\rangle
\,\eta^{\mu}\,dx \\=& -\mu \int_{A_{k,R}} \langle a(x,u,Du), D\eta\rangle
\eta^{\mu-1}(u-k)\,dx \\&- \int_{A_{k,R}} b(x,u,Du) (u-k)\,\eta^{\mu}\,dx
=:I_2+I_3. \end{aligned}  \label{NEWaveroiniziostep1}
\end{equation}%
Now, we separately consider and estimate $I_{i}$, $i=1,2,3$.

%




\medbreak

\textsc{Estimate of $I_{3}.$} By (\ref{b locally bounded in section}) there
exists $c(M,p,q)$ positive constant such that 
\begin{equation*}
\left\vert b\left( x,u,\xi \right) \right\vert \leq c(M,p,q)\left\{
\left\vert \xi \right\vert ^{\frac{p+q-2}{2}}+\left\vert u\right\vert
^{\delta -1}+b_{0}(x)+1\right\};
\end{equation*}%
therefore 
\begin{equation*}
I_{3}\leq c(M,p,q)\int_{A_{k,R}}\eta ^{\mu }\left\{ \left\vert Du\right\vert
^{\frac{p+q-2}{2}}(u-k)+|u|^{\delta -1}(u-k)+(b_{0}+1)(u-k)\right\} \,dx.
\end{equation*}%
We estimate the right-hand side using the Young's inequality, with exponents 
$\frac{2p}{p+q-2}$ and $\frac{2p}{p-q+2}$. There exists $c>0$, such that 
\begin{equation*}
c(M,p,q)\left\vert Du\right\vert ^{\frac{p+q-2}{2}}(u-k)\leq \frac{c_{1}}{4}%
|Du|^{p}+c(u-k)^{\frac{2p}{p-q+2}}\qquad \text{a.e. in $A_{k,R}$,}
\end{equation*}%
where $c_{1}$ is the constant in (\ref{(i) in the Section}). 
Therefore 
\begin{align}
I_{3}\leq & \tfrac{c_{1}}{4}\int_{A_{k,R}}|Du|^{p}\eta ^{\mu }\,dx  \notag \\
& +c\int_{A_{k,R}}\eta ^{\mu }\left\{ (u-k)^{\frac{2p}{p-q+2}}+|u|^{\delta
-1}(u-k)\right\} \,dx  \notag \\
& +c\int_{A_{k,R}}\eta ^{\mu }(b_{0}+1)(u-k)\,dx.  \label{I_4I}
\end{align}%
Collecting (\ref{NEWaveroiniziostep1}),(\ref{I_4I}) and using (\ref{(i) in
the Section}) we get 
\begin{align}
& \tfrac{3c_{1}}{4}\int_{A_{k,R}}|Du|^{p}\eta ^{\mu }\,dx\leq I_{2}  \notag
\\
& +c\int_{A_{k,R}}\left\{ |u|^{\theta }+(u-k)^{\frac{2p}{p-q+2}}+|u|^{\delta
-1}(u-k)+(b_{0}+1)(u-k)+b_{1}\right\} \,dx.  \label{primadiI_3}
\end{align}

\textsc{Estimate of $I_{2}$}. \ For a.e. $x\in A_{k,R}\cap \{\eta \neq 0\}$
we have, by (\ref{(ii) in the Section}), 
\begin{equation*}
|\langle a(x,u,Du),D\eta \rangle |\leq |D\eta |\left(
c_{3}|Du|^{q-1}+c\left\vert u\right\vert ^{2\frac{q-1}{q-p+2}%
}+c\,b_{1}\right) ,
\end{equation*}%
where $c_{3}$ is the constant in (\ref{(ii) in the Section}). For a.e. $x\in
A_{k,R}\cap \{\eta \neq 0\}$, by $q<p+1$, the Young's inequality with
exponents $\frac{p}{q-1}$ and $\frac{p}{p-q+1}$, and noting that, by (\ref%
{definizione-mu}), $\mu -1=\mu \frac{q-1}{p}$, we get 
\begin{equation*}
\mu c_{3}|Du|^{q-1}|D\eta |(u-k)\eta ^{\mu -1}\leq \frac{c_{1}}{4}%
|Du|^{p}\eta ^{\mu }+c\mu ^{\frac{p}{p-q+1}}|D\eta |^{\frac{p}{p-q+1}}(u-k)^{%
\frac{p}{p-q+1}}.
\end{equation*}%
Therefore, 
\begin{align*}
I_{2}\leq & \tfrac{c_{1}}{4}\int_{A_{k,R}}|Du|^{p}\eta ^{\mu }\,dx+c\mu ^{%
\frac{p}{p-q+1}}\int_{A_{k,R}}|D\eta |^{\frac{p}{p-q+1}}(u-k)^{\frac{p}{p-q+1%
}}\,dx \\
& +c\int_{A_{k,R}}|D\eta |\eta ^{\mu -1}\left\{ \left\vert u\right\vert ^{2%
\frac{q-1}{q-p+2}}+b_{1}\right\} (u-k)\,dx.
\end{align*}%
By (\ref{primadiI_3}) and the inequality above, we get 
\begin{align}
& \tfrac{c_{1}}{2}\int_{A_{k,R}}|Du|^{p}\eta ^{\mu }\,dx\leq
c\int_{A_{k,R}}|D\eta |^{\frac{p}{p-q+1}}(u-k)^{\frac{p}{p-q+1}}\,dx  \notag
\\
& +c\int_{A_{k,R}}|D\eta |\left\vert u\right\vert ^{2\frac{q-1}{q-p+2}%
}(u-k)\,dx+c\int_{A_{k,R}}|u|^{\delta -1}(u-k)\,dx  \notag \\
& +c\int_{A_{k,R}}\left\{ |u|^{\theta }+(u-k)^{\frac{2p}{p-q+2}}\right\} \,dx
\notag \\
& +c\int_{A_{k,R}}|D\eta
|b_{1}(u-k)\,dx+c\int_{A_{k,R}}(b_{0}+1)(u-k)\,dx+c\int_{A_{k,R}}b_{1}\,dx.
\label{e:radice}
\end{align}%
We have $|u|^{2\frac{q-1}{q-p+2}}(u-k)_{+}\leq c\,\left( |u-k|^{2\frac{q-1}{%
q-p+2}+1}+k^{2\frac{q-1}{q-p+2}}(u-k)_{+}\right) $ for some positive $c$
depending only on $p$ and $q$. By Young inequality with exponents $\frac{p}{%
q-1}$ and $\frac{p}{p-q+1}$, we obtain 
\begin{align*}
& \int_{A_{k,R}}|D\eta |(u-k)^{2\frac{q-1}{q-p+2}+1}\,dx \\
\leq & \ c\int_{A_{k,R}}|D\eta |^{\frac{p}{p-q+1}}(u-k)^{\frac{p}{p-q+1}%
}\,dx+c\int_{A_{k,R}}(u-k)^{\frac{2p}{q-p+2}}\,dx.
\end{align*}%
Analogously, 
\begin{equation*}
\int_{A_{k,R}}|D\eta |k^{2\frac{q-1}{q-p+2}}(u-k)\,dx\leq \
c\int_{A_{k,R}}|D\eta |^{\frac{p}{p-q+1}}(u-k)^{\frac{p}{p-q+1}%
}\,dx+c\int_{A_{k,R}}k^{\frac{2p}{q-p+2}}\,dx.
\end{equation*}%
So we get 
\begin{align}
& \int_{A_{k,R}}|D\eta ||u|^{2\frac{q-1}{q-p+2}}(u-k)\,dx  \notag \\
& \leq c\int_{A_{k,R}}|D\eta |^{\frac{p}{p-q+1}}(u-k)^{\frac{p}{p-q+1}%
}\,dx+c\int_{A_{k,R}}(u-k)^{\frac{2p}{q-p+2}}\,dx+c\int_{A_{k,R}}k^{\frac{2p%
}{q-p+2}}\,dx.  \label{e:|u|u1}
\end{align}%
For a.e. $x\in A_{k,R}$ $|u|^{\delta -1}(u-k)_{+}\leq u^{\delta }\leq
c(u-k)^{\delta }+ck^{\delta }$ and we get 
\begin{equation}
\int_{A_{k,R}}|u|^{\delta -1}(u-k)\,dx\leq c\int_{A_{k,R}}\{(u-k)^{\delta
}+k^{\delta }\}\,dx.  \label{e:|u|u2}
\end{equation}%
Analogously, for a.e. $x\in A_{k,R}$ we have $|u|^{\theta }\leq
c(u-k)^{\theta }+ck^{\theta }$; therefore 
\begin{equation}
\int_{A_{k,R}}|u|^{\theta }\,dx\leq c\int_{A_{k,R}}(u-k)^{\theta
}\,dx+ck^{\theta }|A_{k,R}|.  \label{e:ubeta+1}
\end{equation}%
%
%
%
%
%
%
%
%
%
%
%
%
%
%
%
%
%
%
%
%
%
%
%
%
%
%
%
%
%
%
%
%
%
%
%
%
%
%
%
%
%
%
By H\"{o}lder inequality with exponents $s_{1}$ and $\frac{s_{1}}{s_{1}-1}$,
we get 
\begin{equation*}
\int_{A_{k,R}}|D\eta |b_{1}(u-k)\,dx\leq c\left( \int_{A_{k,R}}|D\eta |^{%
\frac{s_{1}}{s_{1}-1}}(u-k)^{\frac{s_{1}}{s_{1}-1}}\,dx\right) ^{1-\frac{1}{%
s_{1}}}\Vert b_{1}\Vert _{L^{s_{1}}(B_{R_{0}})}.
\end{equation*}%
Since $u\in L^{p^{\ast }}(B_{R_{0}})$ and $\frac{s_{1}}{s_{1}-1}<p^{\ast }$,
by using the H\"{o}lder inequality with exponent $p^{\ast }\frac{s_{1}-1}{%
s_{1}}$ and (\ref{eta}), 
\begin{equation*}
\left( \int_{A_{k,R}}|D\eta |^{\frac{s_{1}}{s_{1}-1}}(u-k)^{\frac{s_{1}}{%
s_{1}-1}}\,dx\right) ^{1-\frac{1}{s_{1}}}\leq \tfrac{c}{R-\rho }\Vert
u-k\Vert _{L^{p^{\ast }}(A_{k,R})}|A_{k,R}|^{1-\frac{1}{s_{1}}-\frac{1}{%
p^{\ast }}}.
\end{equation*}%
We conclude that 
\begin{equation}
\int_{A_{k,R}}|D\eta |b_{1}(u-k)\,dx\leq \tfrac{c}{R-\rho }\Vert b_{1}\Vert
_{L^{s_{1}}(B_{R_{0}})}\Vert u-k\Vert _{L^{p^{\ast }}(A_{k,R})}|A_{k,R}|^{1-%
\frac{1}{s_{1}}-\frac{1}{p^{\ast }}}.  \label{e:b1u-k}
\end{equation}%
Analogously, by the integrability assumption on $b_{0}$, $b_{0}\in
L^{s_{0}}(B_{R_{0}})$, and, by H\"{o}lder inequality, 
\begin{equation}
\int_{A_{k,R}}(b_{0}+1)(u-k)\,dx\leq c\,\Vert b_{0}+1\Vert
_{L^{s_{0}}(B_{R_{0}})}\Vert u-k\Vert _{L^{p^{\ast }}(A_{k,R})}|A_{k,R}|^{1-%
\frac{1}{s_{0}}-\frac{1}{p^{\ast }}}.  \label{e:b2u-k}
\end{equation}%
Of course, 
\begin{equation}
\int_{A_{k,R}}b_{1}\,dx\leq \Vert b_{1}\Vert
_{L^{s_{1}}(B_{R_{0}})}|A_{k,R}|^{1-\frac{1}{s_{1}}}.  \label{e:b1}
\end{equation}%
Let us denote $\tau $ as in (\ref{e:expkappa}); i.e. $\tau :=\max \left\{ 
\tfrac{2p}{q-p+2},\theta ,\delta \right\} ;$ then $k^{\theta }+k^{\frac{2p}{%
q-p+2}}+k^{\delta }\leq 3(1+k^{\tau })$. Collecting this last inequality, (%
\ref{e:radice})-(\ref{e:b1}) and using (\ref{eta}) we obtain 
\begin{align}
& \int_{A_{k,\rho }}|Du|^{p}\,dx\leq \tfrac{c}{(R-\rho )^{\frac{p}{p-q+1}}}%
\int_{A_{k,R}}(u-k)^{\frac{p}{p-q+1}}\,dx  \notag \\
& +\,c\,\tfrac{1}{R-\rho }\Vert b_{1}\Vert _{L^{s_{1}}(B_{R_{0}})}\Vert
u-k\Vert _{L^{p^{\ast }}(A_{k,R})}|A_{k,R}|^{1-\frac{1}{s_{1}}-\frac{1}{%
p^{\ast }}}  \notag \\
& +\,c\,\Vert b_{0}+1\Vert _{L^{s_{0}}(B_{R_{0}})}\Vert u-k\Vert
_{L^{p^{\ast }}(A_{k,R})}|A_{k,R}|^{1-\frac{1}{s_{0}}-\frac{1}{p^{\ast }}} 
\notag \\
& +c\int_{A_{k,R}}\left( (u-k)^{\frac{2p}{q-p+2}}+(u-k)^{\theta
}+(u-k)^{\delta }\right) \,dx  \notag \\
& +c\,(1+k^{\tau })|A_{k,R}|+c\Vert b_{1}\Vert
_{L^{s_{1}}(B_{R_{0}})}|A_{k,R}|^{1-\frac{1}{s_{1}}}.  \label{e:Istep}
\end{align}

\textit{Step 3. }Consider 
\begin{equation*}
J:=c\int_{A_{k,R}}\left( (u-k)^{\frac{2p}{q-p+2}}+(u-k)^{\theta
}+(u-k)^{\delta }\right) \,dx,
\end{equation*}%
that is the last integral at the right hand side of (\ref{e:Istep}). We note
that $\frac{2p}{q-p+2}\leq \frac{2p}{2}=p$. By the definition of $p^{\ast }$
and of $\tau $, given in (\ref{e:expkappa}), by the assumptions (\ref%
{e:hppstar}),(\ref{e:hpq}),(\ref{e:stimaIIesponente}) we conclude that $\tau
:=\max \left\{ \tfrac{2p}{q-p+2};\theta ;\delta \right\} <p^{\ast }$.
Therefore, the H\"{o}lder inequality with exponent $p^{\ast }\frac{q-p+2}{2p}
$ implies 
\begin{equation*}
\int_{A_{k,R}}(u-k)^{\frac{2p}{q-p+2}}\,dx\leq \Vert u-k\Vert _{L^{p^{\ast
}}(A_{k,R})}^{\frac{2p}{q-p+2}}|A_{k,R}|^{1-\frac{2p}{p^{\ast }(q-p+2)}}.
\end{equation*}%
The H\"{o}lder inequality with exponent $\frac{p^{\ast }}{\theta }$ gives 
\begin{equation*}
\int_{A_{k,R}}(u-k)^{\theta }\,dx\leq \Vert u-k\Vert _{L^{p^{\ast
}}(A_{k,R})}^{\theta }|A_{k,R}|^{1-\frac{\theta }{p^{\ast }}}
\end{equation*}%
and, using H\"{o}lder inequality with exponent $\frac{p^{\ast }}{\delta }$,
we get 
\begin{equation*}
\int_{A_{k,R}}(u-k)^{\delta }\,dx\leq \Vert u-k\Vert _{L^{p^{\ast
}}(A_{k,R})}^{\delta }|A_{k,R}|^{1-\frac{\delta }{p^{\ast }}}\,.
\end{equation*}%
Therefore 
\begin{align*}
J\leq & c\,\Vert u-k\Vert _{L^{p^{\ast }}(A_{k,R})}^{\frac{2p}{q-p+2}%
}|A_{k,R}|^{1-\frac{2p}{p^{\ast }(q-p+2)}}+c\,\Vert u-k\Vert _{L^{p^{\ast
}}(A_{k,R})}^{\theta }|A_{k,R}|^{1-\frac{\theta }{p^{\ast }}} \\
& +c\,\Vert u-k\Vert _{L^{p^{\ast }}(A_{k,R})}^{\delta }|A_{k,R}|^{1-\frac{%
\delta }{p^{\ast }}}.
\end{align*}%
By this estimate, together with (\ref{e:Istep}), we conclude that
Caccioppoli's inequality (\ref{e:fineIIIstep}) holds.
\end{proof}

\subsection{The recursive formula}

Now we proceed towards the proof of our Theorem \ref{t:mainboundeness} by
setting up the celebrated De Giorgi's iterative method. In what follows, we
tacitly understand that all the assumptions and the no\-ta\-tion
in\-tro\-duced in the previous subsections do apply.

Let $u\in W_{\mathrm{loc}}^{1,q}(\Omega )$ be a weak solution to (\ref%
{elliptic equation}). Fix a point $x_{0}\in \Omega $ and a real $R_{0}\in
(0,1]$ in such a way that 
\begin{equation}
B_{R_{0}}(x_{0})\Subset \Omega .  \label{eq.choiceR0less1}
\end{equation}%
For every fixed $R\in (0,R_{0}]$, we then define the following (decreasing)
sequences: 
\begin{equation}
\rho _{h}:=\frac{R}{2}\left( 1+\frac{1}{2^{h}}\right) \quad \text{and}\quad 
\overline{\rho }_{h}:=\frac{\rho _{h+1}+\rho _{h}}{2},\qquad h\in \mathbb{N}%
\cup \{0\}.  \label{eq.defrhohrhobar}
\end{equation}%
Moreover, given any real number $d\geq 1$, we consider the (increasing)
sequence 
\begin{equation}
k_{h}:=d\left( 1-\frac{1}{2^{h+1}}\right) ,\qquad h\in \mathbb{N}\cup \{0\}.
\label{eq.defbetah}
\end{equation}%
Finally, we define a sequence $(J_{h})_{h\geq 0}$ of non-negative numbers as
follows: 
\begin{equation}
J_{h}:=\int_{A_{k_{h},\rho _{h}}}(u-k_{h})^{p^{\ast }}\,dx.  \label{eq.defJh}
\end{equation}%
%
%
%
%
%
%
%
%
%
%
%
%
%
%
%
%
%
%
%
%
%
%
%
%
%
%
%
%
%
%
%
%
%
%
%
%
%
%
%
%
Then the following result holds.

\begin{proposition}
\label{prop.Jhestimate} For every real number $d\geq 1$, 
\begin{align}
J_{h+1}\leq & \ c_{\ast }\left( \tfrac{1}{R}\right) ^{\frac{p^{\ast }}{p-q+1}%
}\left( 1+\Vert u\Vert _{L^{p^{\ast }}(B_{R})}^{p^{\ast }}\right) ^{\frac{%
p^{\ast }}{p}\max \big\{\frac{1}{s_{0}},\frac{1}{s_{1}}\}}\times  \notag \\
& \times \tfrac{1}{d^{\frac{p^{\ast }}{p}\sigma }}\left( 2^{\frac{p^{\ast }}{%
p}p^{\ast }}\right) ^{h}J_{h}^{\frac{p^{\ast }}{p}\big(1-\max \big\{\frac{1}{%
s_{0}},\frac{1}{s_{1}}\}\big)},  \label{e:Jhp1leJh}
\end{align}%
where $\sigma $ is defined in (\ref{e:defsigma}); i.e., $\sigma :=p^{\ast
}-\max \left\{ \tfrac{p}{p-q+1};\;\tfrac{p^{\ast }}{s_{1}}+1;\;\tfrac{%
p^{\ast }}{s_{0}}+1;\;\theta ;\;\delta \right\} $ and $c_{\ast }$ positive
constant depending on the data, the $L^{s_{0}}$ and the $L^{s_{1}}$ norms in 
$B_{R_{0}}$ of $b_{0},b_{1}$, respectively, but it is independent of $u$ and 
$d$.
\end{proposition}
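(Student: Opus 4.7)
The plan is to run one step of the standard De Giorgi iteration: combine Sobolev's inequality on a cut-off of $(u-k_{h+1})_+$ with the Caccioppoli inequality \eqref{e:fineIIIstep} on the annulus between $\overline\rho_h$ and $\rho_h$, then convert every term on the right-hand side into a multiple of a power of $J_h$ by means of the Chebyshev bound for the super-level sets.

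First I fix a cut-off $\zeta\in C_c^\infty(B_{\overline\rho_h}(x_0))$ with $\zeta\equiv 1$ on $B_{\rho_{h+1}}(x_0)$ and $|D\zeta|\le c\,2^h/R$. Since $(u-k_{h+1})_+\zeta\in W^{1,p}_0(B_{\overline\rho_h})$, the Sobolev embedding gives
\begin{equation*}
J_{h+1}^{p/p^\ast}\le c\int_{A_{k_{h+1},\overline\rho_h}}|Du|^p\,dx+\frac{c\,2^{hp}}{R^p}\int_{A_{k_{h+1},\overline\rho_h}}(u-k_{h+1})^p\,dx.
\end{equation*}
The last integral is handled by Hölder with exponent $p^\ast/p$, while the first is bounded via the Caccioppoli inequality applied with radii $\overline\rho_h<\rho_h$ (so that $\rho_h-\overline\rho_h\simeq R/2^h$) and level $k_{h+1}$. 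All the quantities that appear on the right-hand side of \eqref{e:fineIIIstep} are then norms and measures of $A_{k_{h+1},\rho_h}$.

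Next I control these by $J_h$ through the two Chebyshev-type estimates
\begin{equation*}
\|u-k_{h+1}\|_{L^{p^\ast}(A_{k_{h+1},\rho_h})}^{p^\ast}\le J_h,\qquad |A_{k_{h+1},\rho_h}|\le\frac{1}{(k_{h+1}-k_h)^{p^\ast}}J_h\le\frac{2^{(h+2)p^\ast}}{d^{p^\ast}}J_h,
\end{equation*}
the second one using $u-k_h>k_{h+1}-k_h=d\,2^{-(h+2)}$ on $A_{k_{h+1},\rho_h}$. A term of the form $\|u-k_{h+1}\|_{L^{p^\ast}}^\lambda |A_{k_{h+1},\rho_h}|^{1-\lambda/p^\ast}$ thus becomes $c\,2^{h(p^\ast-\lambda)}d^{-(p^\ast-\lambda)}J_h$, whereas a term with an additional factor $\|b_i\|_{L^{s_i}}|A|^{-1/s_i}$ becomes $c\,2^{h(p^\ast-\lambda-p^\ast/s_i)}d^{-(p^\ast-\lambda-p^\ast/s_i)}J_h^{1-1/s_i}$; finally $(1+k^\tau)|A|\le c(1+d^\tau)d^{-p^\ast}2^{hp^\ast}J_h=c\,d^{-(p^\ast-\tau)}2^{hp^\ast}J_h$ (using $d\ge 1$). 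The pure $b_1$ term gives $c\,d^{-(p^\ast-p^\ast/s_1)}2^{h(\,\cdot\,)}J_h^{1-1/s_1}$. Inspecting all of them, each comes with a power $d^{-(p^\ast-\rho_j)}$ where $\rho_j$ is one of $\tfrac{p}{p-q+1}$, $1+\tfrac{p^\ast}{s_1}$, $1+\tfrac{p^\ast}{s_0}$, $\theta$, $\delta$, and with a power $J_h^{\mu_j}$ where $\mu_j\in\{1,\,1-1/s_0,\,1-1/s_1\}$; hence the worst powers are $d^{-\sigma}$ with $\sigma$ as in \eqref{e:defsigma} and $J_h^{1-\max\{1/s_0,1/s_1\}}$.

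To homogenize the exponent of $J_h$, I use the elementary bound $J_h^{\mu_j}\le J_h^{1-\max\{1/s_0,1/s_1\}}\cdot J_0^{\,\mu_j-(1-\max\{1/s_0,1/s_1\})}$ and the rough estimate $J_0\le\int_{B_R}|u|^{p^\ast}\le\|u\|_{L^{p^\ast}(B_R)}^{p^\ast}$, which produces the factor $(1+\|u\|_{L^{p^\ast}(B_R)}^{p^\ast})^{\max\{1/s_0,1/s_1\}}$. Collecting everything (all negative powers of $R$ are controlled by $R^{-p/(p-q+1)}$ since $R\le 1$, and all $2^h$ factors by $2^{hp^\ast}$) one arrives at
\begin{equation*}
J_{h+1}^{p/p^\ast}\le \frac{c}{R^{p/(p-q+1)}}\bigl(1+\|u\|_{L^{p^\ast}(B_R)}^{p^\ast}\bigr)^{\max\{1/s_0,1/s_1\}}\,\frac{2^{h p^\ast}}{d^{\sigma}}\,J_h^{1-\max\{1/s_0,1/s_1\}},
\end{equation*}
and raising to the power $p^\ast/p$ gives exactly \eqref{e:Jhp1leJh}. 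The only delicate point is the bookkeeping at the third step: one must verify that for every single addend of \eqref{e:fineIIIstep} the exponent $\rho_j$ appearing in the power of $d$ is one of those listed in the definition of $\sigma$, and that \eqref{e:hppstar}--\eqref{e:hpgamma} (in particular $q/p<1+1/n$) guarantee $\sigma>0$, so that the recursion is genuinely sub-linear and the iteration machinery can later be triggered.
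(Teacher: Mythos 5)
Your proposal is correct and follows essentially the same route as the paper's own proof: Sobolev inequality applied to the cut-off of $(u-k_{h+1})_{+}$, the Caccioppoli inequality (\ref{e:fineIIIstep}) on the intermediate radii $\overline\rho_h<\rho_h$, the Chebyshev-type bounds $\Vert u-k_{h+1}\Vert_{L^{p^{\ast}}(A_{k_{h+1},\rho_h})}^{p^{\ast}}\le J_h$ and $|A_{k_{h+1},\rho_h}|\le (2^{h+2}/d)^{p^{\ast}}J_h$, and the homogenization of the exponent of $J_h$ via $J_h\le\Vert u\Vert_{L^{p^{\ast}}(B_R)}^{p^{\ast}}$. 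The only point to make explicit in your bookkeeping is that the exponents $p$ and $\tfrac{2p}{q-p+2}$ also appear in the powers of $d$, but since $d\ge 1$ and $q\ge p$ they are dominated by $\tfrac{p}{p-q+1}$ (exactly as the paper observes), so the worst power is indeed $d^{-\sigma}$ and the estimate (\ref{e:Jhp1leJh}) follows.
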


We notice that, by assumptions (\ref{e:hppstar}) and (\ref{e:hpgamma}), $%
\frac{p^{\ast }}{p}\big(1-\max\big\{\frac{1}{s_{0}},\frac{1}{s_{1}}\}\big)>1$%
. \bigskip

\begin{proof}[Proof of Proposition \protect\ref{prop.Jhestimate}]
We explicitly observe that, since $(\rho_h)_h$ is decreasing and $(k_h)_h$
is increasing, the sequence $(J_h)_h$ is decreasing: in fact, we have 
\begin{align}
J_{h+1} & = \int_{A_{k_{h+1},\rho_{h+1}}}(u-k_{h+1})^{p^*}\,d x \leq
\int_{A_{k_{h+1},\rho_h}}(u-k_{h+1})^{p^*}\, d x  \notag \\[0.1cm]
& \leq \int_{A_{k_{h+1},\rho_h}}(u-k_{h})^{p^*}\, d x \leq
\int_{A_{k_{h},\rho_h}}(u-k_{h})^{p^*}\, d x = J_h.  \label{eq.Jhdecreasing}
\end{align}

Finally, by taking into account the definitions of $J_{h}$, $k_{h}$ and $%
\rho _{h}$, we have 
\begin{equation}
\begin{split}
J_{h}& =\int_{A_{k_{h},\rho _{h}}}(u-k_{h})^{p^{\ast }}\,dx\geq
\int_{A_{k_{h+1},\rho _{h}}}(u-k_{h})^{p^{\ast }}\,dx \\
& \geq (k_{h+1}-k_{h})^{p^{\ast }}\,\big|A_{k_{h+1},\rho _{h}}\big|=\bigg(%
\frac{d}{2^{h+2}}\bigg)^{p^{\ast }}\,\big|A_{k_{h+1},\rho _{h}}\big|.
\end{split}
\label{eq.estimJhgeq}
\end{equation}%
Let $(\eta _{h})_{h\geq 0}$ be a sequence in $C_{c}^{\infty }(\mathbb{R})$
such that \textit{(i)}\thinspace \thinspace $0\leq \eta _{h}\leq 1$ on $%
\mathbb{R}^{n}$; \textit{(ii)}\thinspace \thinspace $\mathrm{supp}\,\eta
_{h}\subseteq B(x_{0},\overline{\rho }_{h})$ and $\eta _{h}\equiv 1$ on $%
B(x_{0},\rho _{h+1})$; \textit{(iii)}\thinspace \thinspace $|D\eta _{h}|\leq %
\displaystyle\tfrac{2^{h+4}}{R}$. In particular, $\eta _{h}\equiv 1\ $on $%
A_{k_{h+1},\rho _{h+1}}$, so we have 
\begin{align}
J_{h+1}^{\frac{p}{p^{\ast }}}& =\left( \int_{A_{k_{h+1},\rho
_{h+1}}}(u-k_{h+1})^{p^{\ast }}\,dx\right) ^{\frac{p}{p^{\ast }}}  \notag \\
& \leq \left( \int_{B(x_{0},\overline{\rho }_{h})}(\eta
_{h}(u-k_{h+1})_{+})^{p^{\ast }}\,dx\right) ^{\frac{p}{p^{\ast }}}  \notag \\
& \leq C_{S}^{p}\int_{B(x_{0},\overline{\rho }_{h})}\big|D(\eta
_{h}(u-k_{h+1})_{+})\big|^{p}\,dx,  \label{e:Jh+1CS}
\end{align}%
where $c_{S}$ is the Sobolev constant. We estimate the last integral {%
\begin{align}
& \int_{B(x_{0},\overline{\rho }_{h})}\big|D(\eta _{h}(u-k_{h+1})_{+})\big|%
^{p}\,dx  \notag \\
\leq & \int_{B(x_{0},\overline{\rho }_{h})}\Big\{|D\eta
_{h}|\,(u-k_{h+1})_{+}\,\,+\eta _{h}\,|Du|\,\chi _{A_{k_{h+1},\overline{\rho 
}_{h}}}\Big\}^{p}\,dx  \notag \\
\leq & c\,\bigg\{\bigg(\tfrac{2^{h+4}}{R}\bigg)^{p}\int_{A_{k_{h+1},%
\overline{\rho }_{h}}}(u-k_{h+1})^{p}\,dx+\int_{A_{k_{h+1},\overline{\rho }%
_{h}}}|Du|^{p}\,dx\bigg\}.  \label{e:etahbarrho}
\end{align}%
}Collecting (\ref{e:Jh+1CS}) and (\ref{e:etahbarrho}) we get 
\begin{equation}
J_{h+1}^{\frac{p}{p^{\ast }}}\leq c\bigg\{\bigg(\tfrac{2^{h+4}}{R}\bigg)%
^{p}\int_{A_{k_{h+1},\overline{\rho }_{h}}}(u-k_{h+1})^{p}\,dx+%
\int_{A_{k_{h+1},\overline{\rho }_{h}}}|Du|^{p}\,dx\bigg\}.
\label{e:etahbarrho2}
\end{equation}%
To estimate the last integral in (\ref{e:etahbarrho2}), we use the
Caccioppoli estimate (\ref{e:fineIIIstep}) with $k=k_{h+1}$, $\rho =%
\overline{\rho }_{h}$, $R=\rho _{h}$, 
so obtaining 
\begin{align}
& \int_{A_{k_{h+1},\overline{\rho }_{h}}}|Du|^{p}\,dx\leq c\left( \tfrac{%
2^{h+3}}{R}\right) ^{\frac{p}{p-q+1}}\int_{A_{k_{h+1},\rho
_{h}}}(u-k_{h+1})^{\frac{p}{p-q+1}}\,dx  \notag \\
& +\,c\tfrac{2^{h+3}}{R}\Vert b_{1}\Vert _{L^{s_{1}}(B_{R_{0}})}J_{h}^{\frac{%
1}{p^{\ast }}}|A_{k_{h+1},\rho _{h}}|^{1-\frac{1}{s_{1}}-\frac{1}{p^{\ast }}}
\notag \\
& +\,c\,\Vert b_{0}+1\Vert _{L^{s_{0}}(B_{R_{0}})}J_{h}^{\frac{1}{p^{\ast }}%
}|A_{k_{h+1},\rho _{h}}|^{1-\frac{1}{s_{0}}-\frac{1}{p^{\ast }}}  \notag \\
\ & +c\,J_{h}^{\frac{2p}{p^{\ast }(q-p+2)}}|A_{k_{h+1},\rho _{h}}|^{1-\frac{%
2p}{p^{\ast }(q-p+2)}}+c\,J_{h}^{\frac{\theta }{p^{\ast }}}|A_{k_{h+1},\rho
_{h}}|^{1-\frac{\theta }{p^{\ast }}}  \notag \\
& +c\,J_{h}^{\frac{\delta }{p^{\ast }}}|A_{k_{h+1},\rho _{h}}|^{1-\frac{%
\delta }{p^{\ast }}}+c\,(1+k_{h+1}^{\tau })|A_{k_{h+1},\rho _{h}}|  \notag \\
& +c\,\Vert b_{1}\Vert _{L^{s_{1}}(B_{R_{0}})}|A_{k_{h+1},\rho _{h}}|^{1-%
\frac{1}{s_{1}}}.  \label{e:usoIIIstep}
\end{align}%
By (\ref{eq.estimJhgeq}) we get 
\begin{equation}
\big|A_{k_{h+1},\rho _{h+1}}\big|\leq \big|A_{k_{h+1},\rho _{h}}\big|\leq %
\bigg(\tfrac{2^{h+2}}{d}\bigg)^{p^{\ast }}J_{h}=4^{p^{\ast }}\,\bigg(\tfrac{%
2^{h}}{d}\bigg)^{p^{\ast }}J_{h}.  \label{abetah+1}
\end{equation}%
Collecting (\ref{e:usoIIIstep}), (\ref{abetah+1}), and using that $%
k_{h+1}\leq d$, we get 
\begin{align}
& \int_{A_{k_{h+1},\overline{\rho }_{h}}}|Du|^{p}\,dx\leq c\bigg(\tfrac{%
2^{h+3}}{R}\bigg)^{\frac{p}{p-q+1}}\int_{A_{k_{h+1},\rho _{h}}}(u-k_{h+1})^{%
\frac{p}{p-q+1}}\,dx  \notag \\
& +c\tfrac{2^{h}}{R}\Vert b_{1}\Vert _{L^{s_{1}}(B_{R_{0}})}\left( 4\,\tfrac{%
2^{h}}{d}\right) ^{p^{\ast }-\frac{p^{\ast }}{s_{1}}-1}J_{h}^{1-\frac{1}{%
s_{1}}}  \notag \\
& +c\,\Vert b_{0}+1\Vert _{L^{s_{0}}(B_{R_{0}})}\left( 4\,\tfrac{2^{h}}{d}%
\right) ^{p^{\ast }-\frac{p^{\ast }}{s_{0}}-1}J_{h}^{1-\frac{1}{s_{0}}}
\label{e:postusoIIIstep} \\
& +c\,\left\{ \bigg(4\tfrac{2^{h}}{d}\bigg)^{p^{\ast }-\frac{2p}{q-p+2}}+%
\bigg(4\tfrac{2^{h}}{d}\bigg)^{p^{\ast }-\theta }+\bigg(4\tfrac{2^{h}}{d}%
\bigg)^{p^{\ast }-\delta }\right\} J_{h}  \notag \\
& +c\,(1+d^{\tau })\,\bigg(4\tfrac{2^{h}}{d}\bigg)^{p^{\ast }}J_{h}+c\,\Vert
b_{1}\Vert _{L^{s_{1}}(B_{R_{0}})}\left( 4\,\tfrac{2^{h}}{d}\right)
^{p^{\ast }-\frac{p^{\ast }}{s_{1}}}J_{h}^{1-\frac{1}{s_{1}}}.  \notag
\end{align}%
We now estimate the integral at the right hand side. 
\begin{align*}
& \int_{A_{k_{h+1},\rho _{h}}}(u-k_{h+1})^{\frac{p}{p-q+1}}\,dx \\
\leq & \left( \int_{A_{k_{h+1},\rho _{h}}}(u-k_{h+1})^{p^{\ast }}\,dx\right)
^{\frac{p}{p^{\ast }(p-q+1)}}|A_{k_{h+1},\rho _{h}}|^{1-\frac{p}{p^{\ast
}(p-q+1)}}
\end{align*}%
that, by (\ref{abetah+1}), implies 
\begin{equation}
\int_{A_{k_{h+1},\rho _{h}}}(u-k_{h+1})^{\frac{p}{p-q+1}}\,dx\leq c\,\bigg(%
\tfrac{2^{h}}{d}\bigg)^{p^{\ast }\big(1-\frac{p}{p^{\ast }(p-q+1)}\big)%
}J_{h}.  \label{e:stimaIintegrale}
\end{equation}%
Collecting (\ref{e:postusoIIIstep}),(\ref{e:stimaIintegrale}) and using $%
k_{h+1}\leq d$, we get 
\begin{align}
& \int_{A_{k_{h+1},\overline{\rho }_{h}}}|Du|^{p}\,dx\leq c\,\bigg(\tfrac{1}{%
R}\bigg)^{\frac{p}{p-q+1}}\bigg(\tfrac{1}{d}\bigg)^{p^{\ast }\big(1-\frac{p}{%
p^{\ast }(p-q+1)}\big)}2^{p^{\ast }h}J_{h}  \notag \\
& +c\,\Vert b_{1}\Vert _{L^{s_{1}}(B_{R_{0}})}\tfrac{1}{R}\bigg(\tfrac{1}{d}%
\bigg)^{p^{\ast }-\frac{p^{\ast }}{s_{1}}-1}2^{p^{\ast }\big(1-\frac{1}{s_{1}%
}\big)h}J_{h}^{1-\frac{1}{s_{1}}}  \notag \\
& +c\,\Vert b_{0}+1\Vert _{L^{s_{0}}(B_{R_{0}})}\bigg(\tfrac{2^{h}}{d}\bigg)%
^{p^{\ast }-\frac{p^{\ast }}{s_{0}}-1}J_{h}^{1-\frac{1}{s_{0}}}  \notag \\
& +c\,\left\{ \bigg(\tfrac{2^{h}}{d}\bigg)^{p^{\ast }-\frac{2p}{q-p+2}}+%
\bigg(\tfrac{2^{h}}{d}\bigg)^{p^{\ast }-\theta }+\bigg(\tfrac{2^{h}}{d}\bigg)%
^{p^{\ast }-\delta }+(1+d^{\tau })\,\bigg(\tfrac{2^{h}}{d}\bigg)^{p^{\ast
}}\right\} J_{h}  \notag \\
& +c\Vert b_{1}\Vert _{L^{s_{1}}(B_{R_{0}})}\bigg(\tfrac{2^{h}}{d}\bigg)%
^{p^{\ast }-\frac{p^{\ast }}{s_{1}}}J_{h}^{1-\frac{1}{s_{1}}}
\label{e:postusoIIIstep2}
\end{align}%
with a constant $c$ depending on $n,p,q,\theta ,\delta ,s_{0},s_{1}$ and the
embedding Sobolev constant $c_{S}$, but depending neither on $d$, $h$ nor $u$%
. We now put together (\ref{e:etahbarrho2}) and (\ref{e:postusoIIIstep2});
taking into account that H\"{o}lder inequality and (\ref{abetah+1}) imply 
\begin{equation*}
\int_{A_{k_{h+1},\overline{\rho }_{h}}}(u-k_{h+1})^{p}\,dx\leq c\,\bigg(%
\tfrac{2^{h}}{d}\bigg)^{p^{\ast }-p}J_{h},
\end{equation*}%
we obtain 
\begin{align*}
J_{h+1}^{\frac{p}{p^{\ast }}}\leq & \,c\,\left( \bigg(\tfrac{1}{R}\bigg)^{p}%
\bigg(\tfrac{1}{d}\bigg)^{p^{\ast }\big(1-\frac{p}{p^{\ast }}\big)}+\bigg(%
\tfrac{1}{R}\bigg)^{\frac{p}{p-q+1}}\bigg(\tfrac{1}{d}\bigg)^{p^{\ast }\big(%
1-\frac{p}{p^{\ast }(p-q+1)}\big)}\right) 2^{p^{\ast }h}J_{h} \\
& +c\,\Vert b_{1}\Vert _{L^{s_{1}}(B_{R_{0}})}\tfrac{1}{R}\bigg(\tfrac{1}{d}%
\bigg)^{p^{\ast }-\frac{p^{\ast }}{s_{1}}-1}2^{p^{\ast }\big(1-\frac{1}{s_{1}%
}\big)h}J_{h}^{1-\frac{1}{s_{1}}} \\
& +c\,\Vert b_{0}+1\Vert _{L^{s_{0}}(B_{R_{0}})}\bigg(\tfrac{2^{h}}{d}\bigg)%
^{p^{\ast }-\frac{p^{\ast }}{s_{0}}-1}J_{h}^{1-\frac{1}{s_{0}}} \\
& +c\,\left\{ \bigg(\tfrac{2^{h}}{d}\bigg)^{p^{\ast }-\frac{2p}{q-p+2}}+%
\bigg(\tfrac{2^{h}}{d}\bigg)^{p^{\ast }-\theta }+\bigg(\tfrac{2^{h}}{d}\bigg)%
^{p^{\ast }-\delta }+(1+d^{\tau })\,\bigg(\tfrac{2^{h}}{d}\bigg)^{p^{\ast
}}\right\} J_{h} \\
& +\Vert b_{1}\Vert _{L^{s_{1}}(B_{R_{0}})}\bigg(\tfrac{2^{h}}{d}\bigg)%
^{p^{\ast }\big(1-\frac{1}{s_{1}}\big)}J_{h}^{1-\frac{1}{s_{1}}}.
\end{align*}%
%
%
%
%
%
%
%
%
%
%
%
%
%
%
%
%
%
%
%
%
%
%
%
%
%
%
%
%
%
%
%
%
%
%
%
%
%
Notice that $J_{h}\leq \Vert u\Vert _{L^{p^{\ast }}(B_{R})}^{p^{\ast }}$ for
every $h\in \mathbb{N}$, so that 
\begin{equation*}
\max \{J_{h},J_{h}^{1-\frac{1}{s_{0}}},J_{h}^{1-\frac{1}{s_{1}}}\}\leq
(1+\Vert u\Vert _{L^{p^{\ast }}(B_{R})}^{p^{\ast }})^{\max \big\{\frac{1}{%
s_{0}},\frac{1}{s_{1}}\big\}}J_{h}^{1-\max \big\{\frac{1}{s_{0}},\frac{1}{%
s_{1}}\big\}}.
\end{equation*}%
Therefore 
\begin{align}
& J_{h+1}^{\frac{p}{p^{\ast }}}\leq \,c\,(1+\Vert u\Vert _{L^{p^{\ast
}}(B_{R})}^{p^{\ast }})^{\max \big\{\frac{1}{s_{0}},\frac{1}{s_{1}}\big\}%
}\left( 1+\Vert b_{1}\Vert _{L^{s_{1}}(B_{R_{0}})}+\Vert b_{0}+1\Vert
_{L^{s_{0}}(B_{R_{0}})}\right) \times  \notag \\
& \times \left\{ \left( \bigg(\tfrac{1}{R}\bigg)^{p}\bigg(\tfrac{1}{d}\bigg)%
^{p^{\ast }-p}+\bigg(\tfrac{1}{R}\bigg)^{\frac{p}{p-q+1}}\bigg(\tfrac{1}{d}%
\bigg)^{p^{\ast }-\frac{p}{p-q+1}}\right) 2^{p^{\ast }h}\right.  \notag \\
& +\tfrac{1}{R}\bigg(\tfrac{1}{d}\bigg)^{p^{\ast }-\frac{p^{\ast }}{s_{1}}%
-1}2^{p^{\ast }\big(1-\frac{1}{s_{1}}\big)h}+\bigg(\tfrac{2^{h}}{d}\bigg)%
^{p^{\ast }-\frac{p^{\ast }}{s_{0}}-1}  \notag \\
& +\bigg(\tfrac{2^{h}}{d}\bigg)^{p^{\ast }-\frac{2p}{q-p+2}}+\bigg(\tfrac{%
2^{h}}{d}\bigg)^{p^{\ast }-\theta }+\bigg(\tfrac{2^{h}}{d}\bigg)^{p^{\ast
}-\delta }+(1+d^{\tau })\,\bigg(\tfrac{2^{h}}{d}\bigg)^{p^{\ast }}  \notag \\
& \left. +\bigg(\tfrac{2^{h}}{d}\bigg)^{p^{\ast }-\frac{p^{\ast }}{s_{1}}%
}\right\} J_{h}^{1-\max \big\{\frac{1}{s_{0}},\frac{1}{s_{1}}\big\}}.
\label{e:Jh+1leJh}
\end{align}%
We now majorize the right hand side. Since $R_{0}\leq 1$ and $q\geq 1$, then 
\begin{equation*}
\max \bigg\{1;\tfrac{1}{R};\left( \tfrac{1}{R}\right) ^{\frac{p}{p-q+1}}%
\bigg\}\leq \left( \tfrac{1}{R}\right) ^{\frac{p}{p-q+1}}\qquad \text{for
every $R\in ]0,R_{0}]$}.
\end{equation*}%
Note that 
\begin{equation*}
\max \left\{ p;\tfrac{p}{p-q+1};\tfrac{2p}{q-p+2}\right\} =\tfrac{p}{p-q+1}%
\,;\;\;\;\;\;\max \left\{ \tfrac{p^{\ast }}{s_{1}}+1;\tfrac{p^{\ast }}{s_{1}}%
\right\} =\tfrac{p^{\ast }}{s_{1}}+1\,.
\end{equation*}%
Taking into account that $d\geq 1$ and denoting $\sigma $ as in (\ref%
{e:defsigma}), i.e. 
\begin{equation*}
\sigma :=p^{\ast }-\max \left\{ \tfrac{p}{p-q+1};\;\tfrac{p^{\ast }}{s_{1}}%
+1;\;\tfrac{p^{\ast }}{s_{0}}+1;\;\theta ;\;\delta \right\} ,
\end{equation*}%
by inequality (\ref{e:Jh+1leJh}) we obtain 
\begin{align*}
J_{h+1}^{\frac{p}{p^{\ast }}}\leq & c_{0}\left( \tfrac{1}{R}\right) ^{\frac{p%
}{p-q+1}}\left( 1+\Vert b_{1}\Vert _{L^{s_{1}}(B_{R_{0}})}+\Vert
b_{0}+1\Vert _{L^{s_{0}}(B_{R_{0}})}\right) \times \\
& \qquad \qquad \times \tfrac{1}{d^{\sigma }}\big(2^{p^{\ast }}\big)%
^{h}(1+\Vert u\Vert _{L^{p^{\ast }}(B_{R})}^{p^{\ast }})^{\max \big\{\frac{1%
}{s_{0}},\frac{1}{s_{1}}\big\}}J_{h}^{1-\max \big\{\frac{1}{s_{0}},\frac{1}{%
s_{1}}\big\}}.
\end{align*}%
Raising at the power $\frac{p^{\ast }}{p}$, we get (\ref{e:Jhp1leJh}) with 
\begin{equation}
c_{\ast }:=\max \{1,c_{0}^{\frac{p^{\ast }}{p}}\}\left( 1+\Vert b_{1}\Vert
_{L^{s_{1}}(B_{R_{0}})}+\Vert b_{0}+1\Vert _{L^{s_{1}}(B_{R_{0}})}\right) ^{%
\frac{p^{\ast }}{p}}.  \label{e:Jcstar}
\end{equation}
\end{proof}

\subsection{Proof of the local boundedness result}

With Proposition \ref{prop.Jhestimate} at hand, we are ready to provide the
proof of our main result, namely Theorem \ref{t:mainboundeness}. Before
doing this, we remind the following very classical lemma of Real Analysis
(see, e.g., \cite[Lemma 7.1]{Giusti 2003 book}).

\begin{lemma}
\label{lem.realanalysis} {Let $(z_{h})_{h\geq 0}$ be a sequence of positive
real numbers satisfying the fol\-low\-ing recursive relation 
\begin{equation}
z_{h+1}\leq L\,\zeta ^{h}z_{h}^{1+\alpha }\qquad (h\in \mathbb{N}\cup \{0\}),
\label{eq.estimzhlemma}
\end{equation}%
where $L,\alpha >0$ and $\zeta >1$. If $z_{0}\leq L^{-\frac{1}{\alpha }%
}\zeta ^{-\frac{1}{\alpha ^{2}}}$, then $z_{h}\leq \zeta ^{-\frac{h}{\alpha }%
}z_{0}\quad $for every $h\geq 0$. In particular, $z_{h}\rightarrow 0$ as $%
h\rightarrow \infty $.}
\end{lemma}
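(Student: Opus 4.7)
The plan is to prove the claim by induction on $h$, with the inductive hypothesis
\[
z_h \le \zeta^{-h/\alpha} z_0.
\]
The base case $h=0$ is trivial since $\zeta^0 = 1$. For the inductive step, I would substitute the hypothesis into the recursion and check that the exponent of $\zeta$ works out correctly.

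More explicitly, assuming $z_h \le \zeta^{-h/\alpha} z_0$, the recursion (\ref{eq.estimzhlemma}) gives
\[
z_{h+1} \le L\,\zeta^h z_h^{1+\alpha} \le L\,\zeta^h \bigl(\zeta^{-h/\alpha} z_0\bigr)^{1+\alpha} = L\, z_0^{\alpha}\, \zeta^{h - h(1+\alpha)/\alpha}\, z_0 = L\, z_0^{\alpha}\, \zeta^{-h/\alpha}\, z_0.
\]
To close the induction I need $L\, z_0^{\alpha} \le \zeta^{-1/\alpha}$, which is exactly the smallness assumption $z_0 \le L^{-1/\alpha}\zeta^{-1/\alpha^2}$ raised to the $\alpha$-th power. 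This yields $z_{h+1} \le \zeta^{-(h+1)/\alpha} z_0$, completing the induction. Finally, since $\zeta > 1$, the factor $\zeta^{-h/\alpha} \to 0$ as $h \to \infty$, so $z_h \to 0$.

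The argument is short and contains no real obstacle: it is just a matter of arranging the exponents, and the threshold on $z_0$ is precisely what is needed so that the multiplicative constant $L z_0^\alpha$ produced by one step of the recursion is absorbed by one power of $\zeta^{-1/\alpha}$. The only care needed is to verify the algebraic identity $h - h(1+\alpha)/\alpha = -h/\alpha$ and to notice that raising the hypothesis on $z_0$ to the $\alpha$-th power gives exactly $L z_0^\alpha \le \zeta^{-1/\alpha}$.
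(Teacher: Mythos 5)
Your induction is correct: the exponent bookkeeping $h-h(1+\alpha)/\alpha=-h/\alpha$ and the observation that the smallness condition on $z_0$, raised to the power $\alpha$, gives exactly $Lz_0^{\alpha}\le\zeta^{-1/\alpha}$ are precisely what is needed to close the inductive step. The paper itself offers no proof, citing the classical statement (Lemma 7.1 in Giusti's book), and your argument is the standard one given there, so there is nothing to add.
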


\begin{proof}[Proof of Theorem \protect\ref{t:mainboundeness}]
Let $u\in W_{\mathrm{loc}}^{1,q}(\Omega )$ be a weak solution to (\ref%
{elliptic equation}) under the assumptions in Section \ref{ss:ipotesi}.
Consider $B_{R_{0}}(x_{0})\Subset \Omega $ with $0<R_{0}\leq 1$. Moreover,
let $d\geq 1$ (to be chosen later on) and let $(J_{h})_{h\geq 0}$ be the
sequence defined in (\ref{eq.defJh}). Owing to Proposition \ref%
{prop.Jhestimate}, for every $R\leq R_{0}$ we have the estimate 
\begin{equation}
J_{h+1}\leq L\,(2^{\frac{p^{\ast }}{p}p^{\ast }})^{h}\,J_{h}^{1+\alpha
}\qquad (h\in \mathbb{N}\cup \{0\}),  \label{eq.estimJhproofmain}
\end{equation}%
where $\alpha $ is 
\begin{equation}
\alpha :=\frac{p^{\ast }}{p}\big(1-\max \big\{\frac{1}{s_{0}},\frac{1}{s_{1}}%
\big\}\big)-1,  \label{eq.alpha}
\end{equation}%
and the constant $L$ is given by 
\begin{equation*}
L:=c_{\ast }\,(1+\Vert u\Vert _{L^{p^{\ast }}(B_{R})}^{p^{\ast }})^{\frac{%
p^{\ast }}{p}\max \big\{\frac{1}{s_{0}},\frac{1}{s_{1}}\big\}}\,\tfrac{1}{d^{%
\frac{p^{\ast }}{p}\sigma }}\left( \tfrac{1}{R}\right) ^{\frac{p^{\ast }}{%
p-q+1}},
\end{equation*}%
where $\sigma $ is defined in (\ref{e:defsigma}) and $c_{\ast }$,
independent of $d$, is defined as in (\ref{e:Jcstar}). 
We now claim that it is possible to choose $d\geq 1$ in such a way that 
\begin{equation}
J_{0}:=\int_{A_{\frac{d}{2},R}}\left( u-\tfrac{d}{2}\right) ^{p^{\ast
}}\,dx\leq L^{-1/\alpha }\,(2^{\frac{p^{\ast }}{p}p^{\ast }})^{-1/\alpha
^{2}}.  \label{eq.claimJzero}
\end{equation}%
%
%
%
%
%
%
%
%
%
%
%
%
%
%
%
%
%
%
%
%
%
%
%
%
%
%
%
%
%
%
%
%
%
%
%
%
%
%
%
%
In fact, by definition of $J_{0}$ and since $u\in L_{\loc}^{p^{\ast
}}(\Omega )$, we have 
\begin{equation*}
J_{0}\leq \int_{B_{R}}|u|^{p^{\ast }}\,dx<\infty ;
\end{equation*}%
thus, reminding that $R_{0}\leq 1$, condition (\ref{eq.claimJzero}) is
clearly fulfilled if we choose 
\begin{equation*}
d:=\tfrac{(c_{\ast })^{\frac{p}{p^{\ast }\sigma }}}{R^{\frac{p}{\sigma
(p-q+1)}}}\,2^{\frac{p^{\ast }}{\alpha \sigma }}\,\left( 1+\Vert u\Vert
_{L^{p^{\ast }}(B(x_{0},R))}^{p^{\ast }}\right) ^{\frac{p\alpha }{p^{\ast
}\sigma }+\max \big\{\frac{1}{s_{0}},\frac{1}{s_{1}}\big\}\frac{1}{\sigma }},
\end{equation*}%
that is, taking into account (\ref{eq.alpha}), 
\begin{equation}
d:=\tfrac{(c_{\ast })^{\frac{p}{p^{\ast }\sigma }}}{R^{\frac{p}{\sigma
(p-q+1)}}}\,2^{\frac{p^{\ast }}{\alpha \sigma }}\,\left( 1+\Vert u\Vert
_{L^{p^{\ast }}(B(x_{0},R))}^{p^{\ast }}\right) ^{\frac{p^{\ast }-p}{p^{\ast
}\sigma }}.  \label{eq.choicebeta}
\end{equation}%
Notice that $d\geq 1$, because $c_{\ast }\geq 1$ and $R_{0}\leq 1$. With (%
\ref{eq.claimJzero}) at hand and $d$ as in (\ref{eq.choicebeta}), we are
entitled to apply Lemma \ref{lem.realanalysis}. As a consequence, we obtain 
\begin{equation}
\lim_{h\rightarrow \infty }J_{h}=\lim_{h\rightarrow \infty
}\int_{A_{k_{h},\rho _{h}}}(u-k_{h})^{p^{\ast
}}\,dx=\int_{A_{d,R/2}}(u-d)^{p^{\ast }}\,dx=0.  \label{eq.limitJhzero}
\end{equation}%
Since, by definition, $u-d>0$ on $A_{d,R/2}$, from (\ref{eq.limitJhzero}) we
then conclude that 
\begin{equation*}
\big|A_{d,R/2}\big|=0,\qquad \text{whence}\quad \text{$u\leq d$ for
a.e.\thinspace $x\in B_{R/2}(x_{0})$}.
\end{equation*}%
To prove that $u$ is locally bounded from below, we can reason analogously,
using the sub-level sets of $u$. 
So we obtain that there exists $c_{\ast }^{\prime }$ such that $-u\leq
d^{\prime }$ a.e. in $B_{\frac{R}{2}}$, with 
\begin{equation*}
d^{\prime }=\tfrac{(c_{\ast }^{\prime })^{\frac{p}{p^{\ast }\sigma }}}{R^{%
\frac{p}{\sigma (p-q+1)}}}\,2^{\frac{p^{\ast }}{\alpha \sigma }}\,\left(
1+\Vert u\Vert _{L^{p^{\ast }}(B(x_{0},R))}^{p^{\ast }}\right) ^{\frac{%
p^{\ast }-p}{p^{\ast }\sigma }}.
\end{equation*}%
We have so proved that $u\in L^{\infty }(B_{R/2}(x_{0}))$. Due to the
arbitrariness of $x_{0}$ and $R_{0}$, we get $u\in L_{\mathrm{loc}}^{\infty
}(\Omega )$ and (\ref{e:stimalim}) follows.
\end{proof}

%
%
%
%

\section{Local Lipschitz continuity and higher differentiability\label%
{Section: Local Lipschitz continuity}}

We start with the assumptions of Section \ref{Section: assumptions for the
local Lipschitz continuity} in order to prove Theorem \ref{final theorem}.
The other local Lipschitz continuity Theorem \ref{final theorem 2} will need
some specific considerations which we will propose below. Thus again, as in
Definition \ref{aweaksolDEF}, we consider a weak solution $u\in W_{\mathrm{%
loc}}^{1,q}\left( \Omega \right) $\ of the differential equation 
\begin{equation}
\sum_{i=1}^{n}\frac{\partial }{\partial x_{i}}a^{i}\left( x,u\left( x\right)
,Du\left( x\right) \right) =b\left( x,u\left( x\right) ,Du\left( x\right)
\right) ,\;\;\;\;\;x\in \Omega \,,  \label{elliptic equation in Section 6}
\end{equation}%
under the ellipticity condition 
\begin{equation}
\sum_{i,j=1}^{n}\frac{\partial a^{i}}{\partial \xi _{j}}\lambda _{i}\lambda
_{j}\geq m\left( 1+\left\vert \xi \right\vert ^{2}\right) ^{\frac{p-2}{2}%
}\left\vert \lambda \right\vert ^{2}\;.  \label{ellipticity in Section 6}
\end{equation}%
and the order-one growth conditions (\ref{growth 0-a}), (\ref{growth 0-b}).
By Lemma \ref{Lemma 1} the assumptions (\ref{(i) in the Section}) and (\ref%
{(ii) in the Section}) hold. Therefore by Theorem \ref{t:mainboundeness} the
weak solution $u\in W_{\mathrm{loc}}^{1,q}\left( \Omega \right) $\ is
locally bounded in $\Omega $.

We read again assumptions (\ref{growth 0-a}), (\ref{growth 0-b}) by taking
into account the local boundedness of $u$: for every open set $\Omega
^{\prime }$, whose closure is contained in $\Omega ,$ there exists a
constant $L>0$ such that $\left\Vert u\right\Vert _{L^{\infty }\left( \Omega
^{\prime }\right) }\leq L$; therefore, by (\ref{growth 0-a}), (\ref{growth
0-b}), there exist a positive constant $M\left( L\right) $ (depending on $%
\Omega ^{\prime }$ and $L$; precisely $M\left( L\right) =\max \left\{
M;L^{\alpha };L^{\beta -1}\right\} $) such that, for every $x\in \Omega
^{\prime }$, $\lambda ,\xi \in \mathbb{R}^{n}$ and for$\;\left\vert
u\right\vert \leq L$ 
\begin{equation}
\left\vert \frac{\partial a^{i}}{\partial \xi _{j}}\right\vert \leq M\left(
L\right) \left( 1+\left\vert \xi \right\vert ^{2}\right) ^{\frac{q-2}{2}%
}\,,\;\;\;\;\;\left\vert \frac{\partial a^{i}}{\partial u}\right\vert \leq
M\left( L\right) \left( 1+\left\vert \xi \right\vert ^{2}\right) ^{\frac{%
p+q-4}{4}}\,.  \label{growth 0 in Section 6}
\end{equation}%
Conditions (\ref{growth 0 in Section 6}) correspond to the assumptions (3.7)
in Marcellini \cite{Marcellini 2023}. Together with the ellipticity
condition (\ref{elliptic equation in Section 6}) and the growth conditions (%
\ref{growth 1}), (\ref{growth 2}) they give the full landscape in order to
state that the vector field $a\left( x,u,\xi \right) =\left( a^{i}\left(
x,u,\xi \right) \right) _{i=1,\ldots ,n}$ satisfies all the assumptions
taken under consideration in \cite{Marcellini 2023} to study the local
Lipschitz continuity of the weak solution $u$. To this aim, it remains only
to analyze the assumption on the right hand side $b\left( x,u,\xi \right) $
in the differential equation (\ref{elliptic equation in Section 6}).

In this paper the growth assumption on $b\left( x,u,\xi \right) $ in (\ref{b
locally bounded}) is (here with abuse of notation $M\left( L\right) =\max
\left\{ M;ML\right\} $) 
\begin{equation}
\left\vert b\left( x,u,\xi \right) \right\vert \leq M\left( L\right) \left(
1+\left\vert \xi \right\vert ^{2}\right) ^{\frac{p+q-2}{4}}+b_{0}\left(
x\right) \,,  \label{growth for b in Section 6}
\end{equation}%
a.e.$\;x\in \Omega ^{\prime }$, and for all $\lambda ,\xi \in \mathbb{R}^{n}$
and$\;\left\vert u\right\vert \leq L$. The difference with the growth
assumption in \cite{Marcellini 2023} is the addendum $b_{0}\left( x\right) $
in the right hand side of (\ref{growth for b in Section 6}), with $b_{0}\in
L_{\mathrm{loc}}^{s_{0}}\left( \Omega \right) $ for some $s_{0}>n$, which is
posed equal to zero in \cite{Marcellini 2023}. Therefore in the following we
analyze in which way it is possible to modify the argument of \cite%
{Marcellini 2023} in order to handle this term.

\subsection{Proof of Theorems \protect\ref{final theorem} and \protect\ref%
{final theorem 2}}

As already said, we start with the proof of Theorem \ref{final theorem}.
Under the notation of Section 5 in \cite{Marcellini 2023} we consider the
equation (\ref{elliptic equation in Section 6}) and a test function $\varphi
\in W_{0}^{1,q}\left( \Omega ^{\prime }\right) $ of the form 
\begin{equation*}
\varphi =\Delta _{-h}\left( \eta ^{2}g\left( \Delta _{h}u\right) \right) ,
\end{equation*}%
where $\Delta _{-h}\psi $ denote as usual the difference quotient of a
function $\psi (x)$ in the $s-$direction and $g$ is a Lipschitz continuous
function $g:\mathbb{R}\rightarrow \mathbb{R}$, with $0<g^{\prime }\left(
t\right) \leq L$ for all $t\in \mathbb{R}$, and $\eta \in C_{0}^{1}\left(
\Omega ^{\prime }\right) $, $\eta \geq 0$, $\Omega ^{\prime }\subset \subset
\Omega $. We can see that $\varphi \in W_{0}^{1,q}\left( \Omega ^{\prime
}\right) $. Precisely, following Section 5.5.2 in \cite{Marcellini 2023},
for generic $k\in \mathbb{N}$ and $\beta \geq 0$ we choose $g\left( t\right)
=t\left( 1+t^{2}\right) ^{\beta /2}$ when $t\in \left[ -k,k\right] $ and $%
g\left( t\right) $ affine out of the interval $\left[ -k,k\right] $ in such
a way that globally $g\in C^{1}\left( \mathbb{R}\right) $. For reader's
convenience we adopt here the symbols in \cite{Marcellini 2023}; we note
that $\beta$ here is different and independent of the same symbol used in
Section \ref{Section: Local boundedness}. For $t\in \left[ -k,k\right] $ the
derivative of $g$ holds $g^{\prime }\left( t\right) =\left( 1+t^{2}\right)
^{\beta /2-1}\left( 1+\left( \beta +1\right) t^{2}\right) $; then 
\begin{equation}
0<g^{\prime }\left( t\right) \leq \left( \beta +1\right) \left(
1+t^{2}\right) ^{\beta /2}  \label{derivative of g(t)}
\end{equation}%
for all $t\in \left[ -k,k\right] $, and also for all $t\in \mathbb{R}$,
since $g^{\prime }\left( t\right) $ is constant out of the interval $\left[
-k,k\right] $; precisely $g^{\prime }\left( t\right) =g^{\prime }\left(
k\right) =g^{\prime }\left( -k\right) $ when $t\notin \left[ -k,k\right] $,
and thus for such $t-$values $g^{\prime }\left( t\right) =g^{\prime }\left(
k\right) \leq \left( \beta +1\right) \left( 1+k^{2}\right) ^{\beta /2}\leq
\left( \beta +1\right) \left( 1+t^{2}\right) ^{\beta /2}$.

We insert $\varphi =\Delta _{-h}\left( \eta ^{2}g\left( \Delta _{h}u\right)
\right) $ in the weak form of the differential equation (\ref{elliptic
equation in Section 6}) and we obtain 
\begin{equation}
\int_{\Omega }\sum_{i=1}^{n}a^{i}\left( x,u,Du\left( x\right) \right) \left(
\Delta _{-h}\left( \eta ^{2}g\left( \Delta _{h}u\right) \right) \right)
_{x_{i}}dx\,  \label{weak equation 1}
\end{equation}%
\begin{equation*}
+\,\int_{\Omega }b\left( x,u,Du\right) \left( \Delta _{-h}\left( \eta
^{2}g\left( \Delta _{h}u\right) \right) \right) \,dx=0.
\end{equation*}

The integral with the vector field $a\left( x,u,\xi \right) =\left(
a^{i}\left( x,u,\xi \right) \right) _{i=1,\ldots ,n}$ can be estimated
exactly as done in Section 5 of \cite{Marcellini 2023}. In fact, by
proceeding as in subsection 5.2 of \cite{Marcellini 2023}, we get 
\begin{equation}
\frac{1}{c}\int_{0}^{1}dt\int_{\Omega }\eta ^{2}g^{\prime }\left( \Delta
_{h}u\right) \left( 1+\left\vert \left( 1-t\right) Du\left( x\right)
+tDu\left( x+the_{s}\right) \right\vert ^{2}\right) ^{\frac{p-2}{2}%
}\left\vert \Delta _{h}Du\right\vert ^{2}\,dx\,  \label{stima-Paolo}
\end{equation}%
\begin{equation*}
\leq \int_{0}^{1}dt\int_{\Omega }\eta ^{2}g^{\prime }\left( \Delta
_{h}u\right) \left( 1+\left\vert Du\left( x\right) \right\vert
^{2}+\left\vert Du\left( x+the_{s}\right) \right\vert ^{2}\right) ^{\frac{q}{%
2}}\,dx\,
\end{equation*}%
\begin{equation*}
+\int_{0}^{1}dt\int_{\Omega }2\eta \left\vert D\eta \right\vert \cdot
\left\vert g\left( \Delta _{h}u\right) \right\vert \left( 1+\left\vert
Du\left( x\right) \right\vert ^{2}+\left\vert Du\left( x+the_{s}\right)
\right\vert ^{2}\right) ^{\frac{q-1}{2}}\,dx\,
\end{equation*}%
\begin{equation*}
+\int_{0}^{1}dt\int_{\Omega }\left\vert D\eta \right\vert ^{2}\cdot \frac{%
g^{2}\left( \Delta _{h}u\right) }{g^{\prime }\left( \Delta _{h}u\right) }%
\left( 1+\left\vert Du\left( x\right) \right\vert ^{2}+\left\vert Du\left(
x+the_{s}\right) \right\vert ^{2}\right) ^{\frac{q-2}{2}}\,dx\,
\end{equation*}%
\begin{equation*}
+\int_{\Omega }b(x,u,Du)\left( \Delta _{h}(\eta ^{2}g(\Delta _{h}u)\right)
\,dx\,.
\end{equation*}

\subsection{Estimate of the right hand side $b$}

For the integral related to the term with $b\left( x,u,\xi \right) $ we use
the growth assumption (\ref{growth for b in Section 6}): $\left\vert b\left(
x,u,\xi \right) \right\vert \leq M\left( L\right) \left( 1+\left\vert \xi
\right\vert ^{2}\right) ^{\frac{p+q-2}{4}}+b_{0}\left( x\right) $ and we get 
\begin{equation*}
\;\;\;\left\vert \int_{\Omega }b\left( x,u,Du\right) \left( \Delta
_{-h}\left( \eta ^{2}g\left( \Delta _{h}u\right) \right) \right)
\,dx\right\vert
\end{equation*}%
\begin{equation*}
\leq M\left( L\right) \left\vert \int_{\Omega }\left( 1+\left\vert
Du\right\vert ^{2}\right) ^{\frac{p+q-2}{4}}\cdot \left( \Delta _{-h}\left(
\eta ^{2}g\left( \Delta _{h}u\right) \right) \right) \,dx\right\vert
\end{equation*}%
\begin{equation*}
+\left\vert \int_{\Omega }\left\vert b_{0}\left( x\right) \right\vert \cdot
\left\vert \Delta _{-h}\left( \eta ^{2}g\left( \Delta _{h}u\right) \right)
\right\vert \,dx\right\vert \,.
\end{equation*}%
Reasoning as in \cite{Marcellini 2023} (see in particular Section 5.3.7 of 
\cite{Marcellini 2023}) and it can be estimated as in (5.15) of \cite%
{Marcellini 2023}, %
%
%
we represent $\Delta _{-h}\left( \eta ^{2}g\left( \Delta _{h}u\right)
\right) $ in this way 
\begin{equation*}
\Delta _{-h}\left( \eta ^{2}g\left( \Delta _{h}u\right) \right)
=\int_{0}^{1}\left( 2\eta \eta _{x_{s}}\,g\left( \Delta _{h}u\right) +\eta
^{2}g^{\prime }\left( \Delta _{h}u\right) \Delta _{h}u_{x_{s}}\right) \,dt\,,
\end{equation*}%
where the arguments in the last integrands are $x-the_{s}$\thinspace .
Therefore 
\begin{equation}
\;\;\;\left\vert \int_{\Omega }b\left( x,u,Du\right) \left( \Delta
_{-h}\left( \eta ^{2}g\left( \Delta _{h}u\right) \right) \right)
\,dx\right\vert  \label{Section 6 - formula 1}
\end{equation}%
\begin{equation}
\leq M\left( L\right) \int_{\Omega }\left( 1+\left\vert Du\right\vert
^{2}\right) ^{\frac{p+q-2}{4}}\cdot \left( \int_{0}^{1}\left\vert 2\eta \eta
_{x_{s}}\,g\left( \Delta _{h}u\right) +\eta ^{2}g^{\prime }\left( \Delta
_{h}u\right) \Delta _{h}u_{x_{s}}\right\vert \,dt\,\right) \,dx
\label{Section 6 - formula 2}
\end{equation}%
\begin{equation}
+\int_{0}^{1}\left\vert \int_{\Omega }\left\vert b_{0}\left( x\right)
\right\vert \cdot \left\vert 2\eta \eta _{x_{s}}\,g\left( \Delta
_{h}u\right) +\eta ^{2}g^{\prime }\left( \Delta _{h}u\right) \Delta
_{h}u_{x_{s}}\right\vert \,dx\right\vert \,dt\,.
\label{Section 6 - formula 4}
\end{equation}%
The above term in (\ref{Section 6 - formula 2}) is identical to the
correspondent term in \cite{Marcellini 2023} (see in particular Section
5.3.7 of \cite{Marcellini 2023}) and it can be estimated as in (5.15) of 
\cite{Marcellini 2023}. Thus we limit here to estimate the addendum in (\ref%
{Section 6 - formula 4}). Recalling that $g^{\prime }\left( t\right) >0$ for
all $t\in \mathbb{R}$, we start by using the inequalities 
\begin{equation*}
2\eta \left\vert \eta _{x_{s}}\right\vert \,\left\vert b_{0}\left( x\right)
\right\vert \,\left\vert g\left( \Delta _{h}u\right) \right\vert =2\eta
\left\vert \eta _{x_{s}}\right\vert \,\left\vert b_{0}\left( x\right)
\right\vert \,\frac{\left\vert g\left( \Delta _{h}u\right) \right\vert }{%
\left( g^{\prime }\left( \Delta _{h}u\right) \right) ^{1/2}}\left( g^{\prime
}\left( \Delta _{h}u\right) \right) ^{1/2}
\end{equation*}%
\begin{equation*}
\leq \left\vert D\eta \right\vert ^{2}\frac{g^{2}\left( \Delta _{h}u\right) 
}{g^{\prime }\left( \Delta _{h}u\right) }+\eta ^{2}b_{0}^{2}\left( x\right)
\,g^{\prime }\left( \Delta _{h}u\right) \,;
\end{equation*}%
\begin{equation*}
\left\vert b_{0}\left( x\right) \right\vert \cdot \left\vert \Delta
_{h}u_{x_{s}}\right\vert \leq \varepsilon \left\vert \Delta
_{h}u_{x_{s}}\right\vert ^{2}+\frac{1}{4\varepsilon }b_{0}^{2}\left(
x\right) \,.
\end{equation*}%
As before we denote by $\Omega ^{\prime }={\supp}\eta $, which is a compact
set contained in $\Omega $. Moreover, since $b_{0}\in L_{\mathrm{loc}%
}^{s_{0}}\left( \Omega \right) $ for $s_{0}>n$, and in particular $b_{0}\in
L^{s_{0}}\left( \Omega ^{\prime }\right) $, we can also use H\"{o}lder
inequality with exponents $\frac{s_{0}}{2}$ and $\frac{s_{0}}{s_{0}-2}$.
From (\ref{Section 6 - formula 4}) we obtain 
\begin{equation}
\left\vert \int_{\Omega }\left\vert b_{0}\left( x\right) \right\vert \cdot
\left\vert \Delta _{-h}\left( \eta ^{2}g\left( \Delta _{h}u\right) \right)
\right\vert \,dx\right\vert  \label{Section 6 - formula 5}
\end{equation}%
\begin{equation*}
=\int_{0}^{1}\left\vert \int_{\Omega }\left\vert b_{0}\left( x\right)
\right\vert \cdot \left\vert 2\eta \eta _{x_{s}}\,g\left( \Delta
_{h}u\right) +\eta ^{2}g^{\prime }\left( \Delta _{h}u\right) \Delta
_{h}u_{x_{s}}\right\vert \,dx\right\vert \,dt\,
\end{equation*}%
\begin{equation*}
\leq \int_{0}^{1}\,dt\int_{\Omega }\left\vert D\eta \right\vert ^{2}\frac{%
g^{2}\left( \Delta _{h}u\right) }{g^{\prime }\left( \Delta _{h}u\right) }%
\,dx+\int_{0}^{1}\,dt\int_{\Omega }\eta ^{2}b_{0}^{2}\left( x\right)
\,g^{\prime }\left( \Delta _{h}u\right) \,dx
\end{equation*}%
\begin{equation*}
+\varepsilon \int_{\Omega }\eta ^{2}g^{\prime }\left( \Delta _{h}u\right)
\left\vert \Delta _{h}u_{x_{s}}\right\vert ^{2}\,dx+\frac{1}{4\varepsilon }%
\int_{0}^{1}\,dt\int_{\Omega }\eta ^{2}b_{0}^{2}\left( x\right) g^{\prime
}\left( \Delta _{h}u\right) \,dx
\end{equation*}%
\begin{equation*}
\leq \int_{0}^{1}\,dt\int_{\Omega }\left\vert D\eta \right\vert ^{2}\frac{%
g^{2}\left( \Delta _{h}u\right) }{g^{\prime }\left( \Delta _{h}u\right) }%
\,dx+\varepsilon \int_{\Omega }\eta ^{2}g^{\prime }\left( \Delta
_{h}u\right) \left\vert \Delta _{h}u_{x_{s}}\right\vert ^{2}\,dx
\end{equation*}%
\begin{equation*}
+\left( 1+\frac{1}{4\varepsilon }\right) \left( \int_{\Omega ^{\prime
}}\left\vert b_{0}\left( x\right) \right\vert ^{s_{0}}\,dx\right) ^{\frac{2}{%
s_{0}}}\cdot \int_{0}^{1}\,dt\left( \int_{\Omega }\left( \eta ^{2}g^{\prime
}\left( \Delta _{h}u\right) \right) ^{\frac{s_{0}}{s_{0}-2}}\,dx\right) ^{%
\frac{s_{0}-2}{s_{0}}}\,.
\end{equation*}%
With the aim to estimate the last addendum in (\ref{Section 6 - formula 5})
we observe that the exponent $\frac{s_{0}}{s_{0}-2}>1$ is strictly less than 
$\frac{n}{n-2}$, since $s_{0}>n$ (for simplicity, we limit here to consider
the details for the case $n>2;$ for $n=2$ we can proceed similarly with
small modifications)). We represent $\frac{s_{0}}{s_{0}-2}$ as \textit{%
convex combination} of $1$ and $\frac{n}{n-2}$ 
\begin{equation*}
\frac{s_{0}}{s_{0}-2}=t+\frac{n}{n-2}\left( 1-t\right) ,\;\;\;\;\;\;\text{%
with}\;\;\;t=\frac{s_{0}-n}{s_{0}-2}\;\;\;\;\text{and}\;\;\;1-t=\frac{n-2}{%
s_{0}-2}\,.
\end{equation*}%
Let $\lambda $ be a positive real parameter that we will fix later; a
computation shows that $\lambda ^{-\frac{n}{s_{0}-n}t+\frac{n}{n-2}\left(
1-t\right) }=1$. By H\"{o}lder inequality with exponents $\frac{1}{t}$ and $%
\frac{1}{1-t}$ 
\begin{equation*}
\int_{\Omega }\left( \eta ^{2}g^{\prime }\left( \Delta _{h}u\right) \right)
^{\frac{s_{0}}{s_{0}-2}}\,dx=\int_{\Omega }\left( \eta ^{2}g^{\prime }\left(
\Delta _{h}u\right) \right) ^{t+\frac{n}{n-2}\left( 1-t\right) }\,dx
\end{equation*}%
\begin{equation*}
=\int_{\Omega }\left( \lambda ^{-\frac{n}{s_{0}-n}}\eta ^{2}g^{\prime
}\left( \Delta _{h}u\right) \right) ^{t}\left( \lambda \eta ^{2}g^{\prime
}\left( \Delta _{h}u\right) \right) ^{\frac{n}{n-2}\left( 1-t\right) }\,dx
\end{equation*}%
\begin{equation*}
\leq \left( \lambda ^{-\frac{n}{s_{0}-n}}\int_{\Omega }\eta ^{2}g^{\prime
}\left( \Delta _{h}u\right) \,dx\right) ^{t}\left( \lambda ^{\frac{n}{n-2}%
}\int_{\Omega }\left( \eta ^{2}g^{\prime }\left( \Delta _{h}u\right) \right)
^{\frac{n}{n-2}}\,dx\right) ^{\left( 1-t\right) }\,.
\end{equation*}%
We first recall that $t=\frac{s_{0}-n}{s_{0}-2}$ and $1-t=\frac{n-2}{s_{0}-2}
$. Then, we use Young's inequality with exponents $\frac{s_{0}}{s_{0}-n}$
and $\frac{s_{0}}{n}$ 
\begin{equation}
\left( \int_{\Omega }\left( \eta ^{2}g^{\prime }\left( \Delta _{h}u\right)
\right) ^{\frac{s_{0}}{s_{0}-2}}\,dx\right) ^{\frac{s_{0}-2}{s_{0}}}
\label{new part 1}
\end{equation}%
\begin{equation*}
\leq \left( \lambda ^{-\frac{n}{s_{0}-n}}\int_{\Omega }\eta ^{2}g^{\prime
}\left( \Delta _{h}u\right) \,dx\right) ^{t\frac{s_{0}-2}{s_{0}}}\left(
\lambda ^{\frac{n}{n-2}}\int_{\Omega }\left( \eta ^{2}g^{\prime }\left(
\Delta _{h}u\right) \right) ^{\frac{n}{n-2}}\,dx\right) ^{\left( 1-t\right) 
\frac{s_{0}-2}{s_{0}}}
\end{equation*}%
\begin{equation*}
=\left( \lambda ^{-\frac{n}{s_{0}-n}}\int_{\Omega }\eta ^{2}g^{\prime
}\left( \Delta _{h}u\right) \,dx\right) ^{\frac{s_{0}-n}{s_{0}}}\left(
\lambda ^{\frac{n}{n-2}}\int_{\Omega }\left( \eta ^{2}g^{\prime }\left(
\Delta _{h}u\right) \right) ^{\frac{n}{n-2}}\,dx\right) ^{\frac{n-2}{s_{0}}}
\end{equation*}%
\begin{equation*}
\leq \tfrac{s_{0}-n}{s_{0}}\lambda ^{-\frac{n}{s_{0}-n}}\int_{\Omega }\eta
^{2}g^{\prime }\left( \Delta _{h}u\right) \,dx+\tfrac{n}{s_{0}}\lambda
\left( \int_{\Omega }\left( \eta ^{2}g^{\prime }\left( \Delta _{h}u\right)
\right) ^{\frac{n}{n-2}}\,dx\right) ^{\frac{n-2}{n}}\,.
\end{equation*}%
The function $\eta $ has compact support in $\Omega $ and we can apply the
Sobolev inequality with exponents $2$ and $2^{\ast }:=\frac{2n}{n-2}$ and
with the Sobolev constant $c=c(n)$ depending only on the dimension $n$ 
\begin{equation}
\left( \int_{\Omega }\left( \eta ^{2}g^{\prime }\left( \Delta _{h}u\right)
\right) ^{\frac{n}{n-2}}\,dx\right) ^{\frac{n-2}{n}}=\left( \int_{\Omega
}\left( \eta \left( g^{\prime }\left( \Delta _{h}u\right) \right) ^{\frac{1}{%
2}}\right) ^{2^{\ast }}\,dx\right) ^{\frac{2}{2^{\ast }}}  \label{new part 2}
\end{equation}%
\begin{equation*}
\leq c(n)\int_{\Omega }\left\vert D\left( \eta \left( g^{\prime }\left(
\Delta _{h}u\right) \right) ^{\frac{1}{2}}\right) \right\vert ^{2}\,dx\,.
\end{equation*}%
Collecting (\ref{new part 1}),(\ref{new part 2}) we get 
\begin{equation*}
\left( \int_{\Omega }\left( \eta ^{2}g^{\prime }\left( \Delta _{h}u\right)
\right) ^{\frac{s_{0}}{s_{0}-2}}\,dx\right) ^{\frac{s_{0}-2}{s_{0}}}\leq 
\tfrac{s_{0}-n}{s_{0}}\lambda ^{-\frac{n}{s_{0}-n}}\int_{\Omega }\eta
^{2}g^{\prime }\left( \Delta _{h}u\right) \,dx
\end{equation*}%
\begin{equation*}
+\tfrac{n}{s_{0}}\lambda c(n)\int_{\Omega }\left\vert D\left( \eta \left(
g^{\prime }\left( \Delta _{h}u\right) \right) ^{\frac{1}{2}}\right)
\right\vert ^{2}\,dx\,.
\end{equation*}%
A simple computation gives 
\begin{equation*}
D\left( \eta \left( g^{\prime }\left( \Delta _{h}u\right) \right) ^{\frac{1}{%
2}}\right) =D\eta \left( g^{\prime }\left( \Delta _{h}u\right) \right) ^{%
\frac{1}{2}}+\tfrac{1}{2}\eta \left( g^{\prime }\left( \Delta _{h}u\right)
\right) ^{-\frac{1}{2}}g^{\prime \prime }\left( \Delta _{h}u\right) \Delta
_{h}Du
\end{equation*}%
and we continue the previous estimate with 
\begin{equation*}
\left( \int_{\Omega }\left( \eta ^{2}g^{\prime }\left( \Delta _{h}u\right)
\right) ^{\frac{s_{0}}{s_{0}-2}}\,dx\right) ^{\frac{s_{0}-2}{s_{0}}}\leq
\int_{\Omega }\left( \tfrac{s_{0}-n}{s_{0}}\lambda ^{-\frac{n}{s_{0}-n}}\eta
^{2}+2\tfrac{n}{s_{0}}\lambda c(n)\left\vert D\eta \right\vert ^{2}\right)
g^{\prime }\left( \Delta _{h}u\right) \,dx
\end{equation*}%
\begin{equation}
+\tfrac{1}{2}\tfrac{n}{s_{0}}\lambda c(n)\int_{\Omega }\eta ^{2}\frac{\left(
g^{\prime \prime }\left( \Delta _{h}u\right) \right) ^{2}}{g^{\prime }\left(
\Delta _{h}u\right) }\left\vert \Delta _{h}Du\right\vert ^{2}\,dx\,.
\label{new part 3}
\end{equation}%
We recall that, for generic $k\in \mathbb{N}$ and $\beta \geq 0$, $g\left(
t\right) =t\left( 1+t^{2}\right) ^{\beta /2}$ when $t\in \left[ -k,k\right] $
and $g\left( t\right) $ affine out of the interval $\left[ -k,k\right] $ in
such a way that globally $g\in C^{1}\left( \mathbb{R}\right) $. Its
derivative $g^{\prime }\left( t\right) $ is continuos in $\mathbb{R}$, and
in fact it is Lipschitz continuos in $\mathbb{R}$. For $t\in \left[ -k,k%
\right] $ we have 
\begin{equation*}
\left\{ 
\begin{array}{l}
g^{\prime }\left( t\right) =\left( 1+t^{2}\right) ^{\beta /2-1}\left(
1+\left( \beta +1\right) t^{2}\right) \\ 
g^{\prime \prime }\left( t\right) =\beta t\left( 1+t^{2}\right) ^{\beta
/2-2}\left( 3+\left( \beta +1\right) t^{2}\right)%
\end{array}%
\right. ;
\end{equation*}%
while when $t\notin \left[ -k,k\right] $ then $g^{\prime }\left( t\right) $
is constant and $g^{\prime \prime }\left( t\right) =0$. We obtain 
\begin{equation*}
\tfrac{\left\vert g^{\prime \prime }\left( t\right) \right\vert }{g^{\prime
}\left( t\right) }\leq \tfrac{\beta \left\vert t\right\vert \left(
1+t^{2}\right) ^{\beta /2-2}\left( 3+\left( \beta +1\right) t^{2}\right) }{%
\left( 1+t^{2}\right) \left( 1+t^{2}\right) ^{\beta /2-2}\left( 1+\left(
\beta +1\right) t^{2}\right) }\leq 3\beta \frac{\left\vert t\right\vert }{%
1+t^{2}}\leq \tfrac{3}{2}\beta
\end{equation*}%
and then, taking the square of both sides, $\tfrac{\left( g^{\prime \prime
}\left( \Delta _{h}u\right) \right) ^{2}}{g^{\prime }\left( \Delta
_{h}u\right) }\leq \tfrac{9}{4}\beta ^{2}g^{\prime }\left( \Delta
_{h}u\right) $. Going back to (\ref{new part 3}) we obtain 
\begin{equation*}
\left( \int_{\Omega }\left( \eta ^{2}g^{\prime }\left( \Delta _{h}u\right)
\right) ^{\frac{s_{0}}{s_{0}-2}}\,dx\right) ^{\frac{s_{0}-2}{s_{0}}}\leq
\int_{\Omega }\left( \tfrac{s_{0}-n}{s_{0}}\lambda ^{-\frac{n}{s_{0}-n}}\eta
^{2}+2\tfrac{n}{s_{0}}\lambda c(n)\left\vert D\eta \right\vert ^{2}\right)
g^{\prime }\left( \Delta _{h}u\right) \,dx
\end{equation*}%
\begin{equation}
+\tfrac{9}{8}\tfrac{n}{s_{0}}\lambda \beta ^{2}c(n)\int_{\Omega }\eta
^{2}g^{\prime }\left( \Delta _{h}u\right) \left\vert \Delta
_{h}Du\right\vert ^{2}\,dx\,.  \label{Section 6 - formula 9}
\end{equation}%
Here we change notation, in principle by posing $\mu =\lambda \beta ^{2}$
for every $\beta >0$ ; for $\beta =0$ there is not necessity of this change.
More precisely, with the aim to avoid the denominator $\beta ^{2}$, which is
not uniformly far from zero, we pose $\mu =\lambda \left( \beta
^{2}+1\right) $ and we increase the last addendum in (\ref{Section 6 -
formula 9}) by changing $\beta ^{2}$ with $\beta ^{2}+1$; then of course $%
\lambda =\frac{\mu }{\beta ^{2}+1}$ and $\lambda \leq \mu $ for all $\beta
\geq 0$. Thus 
\begin{equation*}
\left( \int_{\Omega }\left( \eta ^{2}g^{\prime }\left( \Delta _{h}u\right)
\right) ^{\frac{s_{0}}{s_{0}-2}}\,dx\right) ^{\frac{s_{0}-2}{s_{0}}}\leq
\int_{\Omega }\left( \tfrac{s_{0}-n}{s_{0}}\left( \tfrac{\beta ^{2}+1}{\mu }%
\right) ^{\frac{n}{s_{0}-n}}\eta ^{2}+2\tfrac{n}{s_{0}}\mu c(n)\left\vert
D\eta \right\vert ^{2}\right) g^{\prime }\left( \Delta _{h}u\right) \,dx
\end{equation*}%
\begin{equation}
+\tfrac{9}{8}\tfrac{n}{s_{0}}\mu c(n)\int_{\Omega }\eta ^{2}g^{\prime
}\left( \Delta _{h}u\right) \left\vert \Delta _{h}Du\right\vert ^{2}\,dx\,.
\label{new part 4}
\end{equation}%
By (\ref{Section 6 - formula 5}) and (\ref{new part 4}) we get the final
estimate 
\begin{equation}
\left\vert \int_{\Omega }\left\vert b_{0}\left( x\right) \right\vert \cdot
\left\vert \Delta _{-h}\left( \eta ^{2}g\left( \Delta _{h}u\right) \right)
\right\vert \,dx\right\vert  \label{Section 6 - formula 10}
\end{equation}

\begin{equation}
\leq \int_{0}^{1}\,dt\int_{\Omega }\left\vert D\eta \right\vert ^{2}\frac{%
g^{2}\left( \Delta _{h}u\right) }{g^{\prime }\left( \Delta _{h}u\right) }%
\,dx+\varepsilon \int_{\Omega }\eta ^{2}g^{\prime }\left( \Delta
_{h}u\right) \left\vert \Delta _{h}u_{x_{s}}\right\vert ^{2}\,dx
\label{Section 6 - formula 11}
\end{equation}%
\begin{equation}
+\left( 1+\tfrac{1}{4\varepsilon }\right) \left\Vert b_{0}\right\Vert
_{L^{s_{0}}\left( \Omega ^{\prime }\right) }^{2}\left\{ c\left(
n,s_{0},\beta ,\mu \right) \int_{0}^{1}\,dt\int_{\Omega }(\eta
^{2}+\left\vert D\eta \right\vert ^{2})g^{\prime }\left( \Delta _{h}u\right)
\,dx\right.  \label{Section 6 - formula 12}
\end{equation}%
\begin{equation}
\left. +\tfrac{9}{8}\tfrac{n}{s_{0}}\mu c(n)\int_{0}^{1}\,dt\int_{\Omega
}\eta ^{2}g^{\prime }\left( \Delta _{h}u\right) \left\vert \Delta
_{h}Du\right\vert ^{2}\,dx\right\} \,,  \label{Section 6 - formula 13}
\end{equation}%
where $c\left( n,s_{0},\beta ,\mu \right) =\max \left\{ \tfrac{s_{0}-n}{s_{0}%
}\left( \tfrac{\beta ^{2}+1}{\mu }\right) ^{\frac{n}{s_{0}-n}};\tfrac{2n}{%
s_{0}}\mu c(n)\right\} $ depends only on $n,s_{0},\beta ,\mu $; in
particular it depends on powers of the parameter $\beta .$ We observe that
this constant diverge to $+\infty $ as $\mu \rightarrow 0^{+}$, but this
fact will not be a problem, since in the next section we will fix a
(sufficiently small) value of $\mu >0$.

\subsection{Conclusion}

As explained below, all the addenda in (\ref{Section 6 - formula 11}),(\ref%
{Section 6 - formula 12}),(\ref{Section 6 - formula 13}) can be reabsorbed
in (\ref{stima-Paolo}) and we obtain (cfr. with Section 5.4 in \cite%
{Marcellini 2023}) 
\begin{equation*}
\frac{1}{c}\int_{0}^{1}dt\int_{\Omega }\eta ^{2}g^{\prime }\left( \Delta
_{h}u\right) \left( 1+\left\vert \left( 1-t\right) Du\left( x\right)
+tDu\left( x+the_{s}\right) \right\vert ^{2}\right) ^{\frac{p-2}{2}%
}\left\vert \Delta _{h}Du\right\vert ^{2}\,dx\,
\end{equation*}%
\begin{equation}
\leq \int_{0}^{1}dt\int_{\Omega }\left( \eta ^{2}+\left\vert D\eta
\right\vert ^{2}\right) g^{\prime }\left( \Delta _{h}u\right) \left(
1+\left\vert Du\left( x\right) \right\vert ^{2}+\left\vert Du\left(
x+the_{s}\right) \right\vert ^{2}\right) ^{\frac{q}{2}}\,dx\,
\label{second derivatives}
\end{equation}%
\begin{equation*}
+\int_{0}^{1}dt\int_{\Omega }2\eta \left\vert D\eta \right\vert \cdot
\left\vert g\left( \Delta _{h}u\right) \right\vert \left( 1+\left\vert
Du\left( x\right) \right\vert ^{2}+\left\vert Du\left( x+the_{s}\right)
\right\vert ^{2}\right) ^{\frac{q-1}{2}}\,dx\,
\end{equation*}%
\begin{equation*}
+\int_{0}^{1}dt\int_{\Omega }\left\vert D\eta \right\vert ^{2}\cdot \frac{%
g^{2}\left( \Delta _{h}u\right) }{g^{\prime }\left( \Delta _{h}u\right) }%
\left( 1+\left\vert Du\left( x\right) \right\vert ^{2}+\left\vert Du\left(
x+the_{s}\right) \right\vert ^{2}\right) ^{\frac{q-2}{2}}\,dx\,.
\end{equation*}%
In particular the $\varepsilon -$addendum in (\ref{Section 6 - formula 11})
can be reabsorbed in the left side of (\ref{second derivatives}) if $%
\varepsilon $ is sufficiently small; then the addendum in (\ref{Section 6 -
formula 13}) above, although with the large factor $\left( 1+\frac{1}{%
4\varepsilon }\right) $ in front (however now with $\varepsilon $ fixed),
can be reabsorbed in the left side of (\ref{second derivatives}) by
considering $\mu $ sufficiently small. The dependence of the right hand side
of the estimate (\ref{second derivatives}) on powers of $\beta $ (precisely,
the dependence of the constant $c$ on powers of $\beta $) is allowed.

It remains only to follow the argument of \cite{Marcellini 2023} (see also
details in \cite[Section 4]{Marcellini JMAA 2021}) to conclude that the
gradient $Du$ of the weak solution is locally bounded in $\Omega $, as in (%
\ref{interpolation bound 2}). In fact, by Theorem 3.3 in \cite{Marcellini
2023} we can say that there exist constants $c,\alpha ,\gamma ,R_{0}>0$\
(depending on the $L^{\infty }\left( \Omega ^{\prime }\right) $ norm of $u$
and on the data, but not on $u$) such that, for every $\varrho $\ and $R$\
such that $0<\rho <R<R_{0}$, 
\begin{equation*}
\left\Vert Du\left( x\right) \right\Vert _{L^{\infty }\left( B_{\varrho };%
\mathbb{R}^{n}\right) }\leq \left( \tfrac{c}{\left( R-\varrho \right) ^{%
\frac{\gamma q}{\vartheta p}}}\left\Vert \left( 1+\left\vert Du\left(
x\right) \right\vert ^{2}\right) ^{\frac{1}{2}}\right\Vert _{L^{p}\left(
B_{R}\right) }\right) ^{\alpha }
\end{equation*}%
\begin{equation}
\underset{\text{for }n>2}{=}\;\left( \tfrac{c}{\left( R-\varrho \right) ^{%
\frac{\gamma q}{\vartheta p}}}\left\Vert \left( 1+\left\vert Du\left(
x\right) \right\vert ^{2}\right) ^{\frac{1}{2}}\right\Vert _{L^{p}\left(
B_{R}\right) }\right) ^{\frac{2p}{\left( n+2\right) p-nq}};
\label{interpolation bound 2 in the proof}
\end{equation}%
The explicit expression of the exponent $\alpha $ above (\ref{interpolation
bound 2 in the proof}) is given in (\ref{alpha}), with $\vartheta :=\frac{%
2^{\ast }-2}{2^{\ast }\frac{p}{q}-2}\;\;\underset{\text{for }n>2}{=}\;\;%
\frac{2q}{np-\left( n-2\right) q}$ and $\gamma =\frac{n}{q}\vartheta $.
Therefore $\frac{\gamma q}{\vartheta p}=\frac{n}{p}$ and also, from (\ref%
{interpolation bound 2 in the proof}), 
\begin{equation}
\left\Vert Du\left( x\right) \right\Vert _{L^{\infty }\left( B_{\varrho };%
\mathbb{R}^{n}\right) }\leq \left( \tfrac{c}{\left( R-\varrho \right) ^{n}}%
\int_{B_{R}}\left( 1+\left\vert Du\left( x\right) \right\vert ^{2}\right) ^{%
\frac{p}{2}}\,dx\right) ^{\frac{\alpha }{p}},
\label{interpolation bound 2 bis}
\end{equation}%
which correspond to the stated estimate (\ref{interpolation bound 2}).

The $W_{\mathrm{loc}}^{2,2}\left( \Omega \right) -$bound stated in (\ref%
{bound on second derivatives}) can be similarly obtained in this way: we
first use the bound (\ref{second derivatives}) with $g\left( t\right) $ as
above: $g\left( t\right) =t\left( 1+t^{2}\right) ^{\beta /2}$, $\beta \geq 0$%
.

In the special case $\beta =0$ we have $g\left( t\right) =t$, $g^{\prime
}\left( t\right) =1$ and 
\begin{equation*}
\frac{1}{c}\int_{0}^{1}dt\int_{\Omega }\eta ^{2}\left( 1+\left\vert \left(
1-t\right) Du\left( x\right) +tDu\left( x+the_{s}\right) \right\vert
^{2}\right) ^{\frac{p-2}{2}}\left\vert \Delta _{h}Du\right\vert ^{2}\,dx\,
\end{equation*}%
\begin{equation*}
\leq \int_{0}^{1}dt\int_{\Omega }\left( \eta ^{2}+\left\vert D\eta
\right\vert ^{2}\right) \left( 1+\left\vert Du\left( x\right) \right\vert
^{2}+\left\vert Du\left( x+the_{s}\right) \right\vert ^{2}\right) ^{\frac{q}{%
2}}\,dx\,
\end{equation*}%
\begin{equation*}
+\int_{0}^{1}dt\int_{\Omega }2\eta \left\vert D\eta \right\vert \cdot
\left\vert \Delta _{h}u\right\vert \left( 1+\left\vert Du\left( x\right)
\right\vert ^{2}+\left\vert Du\left( x+the_{s}\right) \right\vert
^{2}\right) ^{\frac{q-1}{2}}\,dx\,
\end{equation*}%
\begin{equation*}
+\int_{0}^{1}dt\int_{\Omega }\left\vert D\eta \right\vert ^{2}\cdot \left(
\Delta _{h}u\right) ^{2}\left( 1+\left\vert Du\left( x\right) \right\vert
^{2}+\left\vert Du\left( x+the_{s}\right) \right\vert ^{2}\right) ^{\frac{q-2%
}{2}}\,dx\,.
\end{equation*}%
Similarly to \cite{Marcellini 2023} we can go to the limit as $h\rightarrow
0 $. In the left hand side we go to the limit by lower semicontinuity and in
the limit $\left\vert D^{2}u\right\vert ^{2}$ appears. In the limit as $%
h\rightarrow 0$ all the three integrands in the right hand side can be
estimated by the $q-$power of the gradient $Du$. More precisely, in the
limit as $h\rightarrow 0$ we obtain (cfr. with (5.18) in \cite[Remark 5.1]%
{Marcellini 2023}) 
\begin{equation*}
\frac{1}{c}\int_{0}^{1}dt\int_{\Omega }\eta ^{2}\left( 1+\left\vert Du\left(
x\right) \right\vert ^{2}\right) ^{\frac{p-2}{2}}\left\vert
D^{2}u\right\vert ^{2}\,dx
\end{equation*}%
\begin{equation}
\leq \int_{0}^{1}dt\int_{\Omega }\left( \eta ^{2}+\left\vert D\eta
\right\vert ^{2}\right) \left( 1+\left\vert Du\left( x\right) \right\vert
^{2}\right) ^{\frac{q}{2}}\,dx.  \label{g(t)=t simplified}
\end{equation}%
The integral with respect to $t\in \left[ 0,1\right] $ is not more
necessary. We fix $\eta $ as usual. Precisely we consider concentric balls $%
B_{R}$ and $B_{\rho }$ compactly contained in $\Omega $, with $\rho
<R<R_{0}=R_{0}\left( \varepsilon ,n,s_{0}\right) $; then we consider a test
function $\eta \in C_{0}^{1}\left( B_{R}\right) $, $0\leq \eta \leq 1$ in $%
B_{R}$, $\eta =1$ in $B_{\rho }$ and $\left\vert D\eta \right\vert \leq
2/\left( R-\rho \right) $. We obtain the simplified version of (\ref{g(t)=t
simplified}) 
\begin{equation}
\int_{B_{\rho }}\left\vert D^{2}u\right\vert ^{2}\,dx\leq c\left( 1+\tfrac{4%
}{\left( R-\rho \right) ^{2}}\right) \int_{B_{R}}\left( 1+\left\vert
Du\left( x\right) \right\vert ^{2}\right) ^{\frac{q}{2}}\,dx\,.
\label{interpolation bound 0}
\end{equation}%
Since $0<\rho <R<R_{0}$, then $\tfrac{4}{\left( R-\rho \right) ^{2}}\geq 
\tfrac{4}{R_{0}^{2}}$ and thus $1\leq \tfrac{R_{0}^{2}}{4}\tfrac{4}{\left(
R-\rho \right) ^{2}}$. Therefore, with a different constant which we
continue to denote by $c$, we also have 
\begin{equation}
\int_{B_{\rho }}\left\vert D^{2}u\right\vert ^{2}\,dx\leq \tfrac{c}{\left(
R-\rho \right) ^{2}}\int_{B_{R}}\left( 1+\left\vert Du\left( x\right)
\right\vert ^{2}\right) ^{\frac{q}{2}}\,dx\,.  \label{interpolation 1}
\end{equation}%
Therefore the first $W_{\mathrm{loc}}^{2,2}\left( \Omega \right) -$bound
stated in (\ref{bound on second derivatives}) is obtained. We now make use
of the interpolation formula in \cite[Remark 6.1]{Marcellini 2023} (see also 
\cite[Theorem 3.1, formula (3.4)]{Marcellini 1991}) 
\begin{equation}
\left\Vert \left( 1+\left\vert Du\left( x\right) \right\vert ^{2}\right) ^{%
\frac{1}{2}}\right\Vert _{L^{q}\left( B_{\varrho }\right) }\leq \left( 
\tfrac{c}{\left( R-\varrho \right) ^{\gamma \left( \frac{q}{p}-1\right) }}%
\left\Vert \left( 1+\left\vert Du\left( x\right) \right\vert ^{2}\right) ^{%
\frac{1}{2}}\right\Vert _{L^{p}\left( B_{R}\right) }^{\frac{1}{\vartheta }%
}\right) ^{\alpha },  \label{interpolation bound 1}
\end{equation}%
where $\alpha ,\vartheta $ are expressed in (\ref{alpha}) and $\gamma =\frac{%
n}{q}\vartheta $. We iterate (\ref{interpolation 1}),(\ref{interpolation
bound 1}) in $B_{\rho }$, $B_{\left( R+\rho \right) /2}$, $B_{R}$; more
precisely we consider (\ref{interpolation 1}) with the balls $B_{\rho }$, $%
B_{\left( R+\rho \right) /2}$ and (\ref{interpolation bound 1}) with $%
B_{\left( R+\rho \right) /2}$, $B_{R}$. With different constants $c$ we
obtain 
\begin{equation*}
\int_{B_{\rho }}\left\vert D^{2}u\right\vert ^{2}\,dx\leq \tfrac{c}{\left(
R-\rho \right) ^{2}}\left\Vert \left( 1+\left\vert Du\left( x\right)
\right\vert ^{2}\right) ^{\frac{1}{2}}\right\Vert _{L^{q}\left( B_{\left(
R+\rho \right) /2}\right) }^{q}
\end{equation*}%
\begin{equation*}
\leq \frac{c}{\left( R-\rho \right) ^{2}}\left( \tfrac{1}{\left( R-\varrho
\right) ^{\gamma \left( \frac{q}{p}-1\right) }}\left\Vert \left(
1+\left\vert Du\left( x\right) \right\vert ^{2}\right) ^{\frac{1}{2}%
}\right\Vert _{L^{p}\left( B_{R}\right) }^{\frac{1}{\vartheta }}\right)
^{\alpha q}\,
\end{equation*}%
\begin{equation*}
=\frac{c}{\left( R-\rho \right) ^{2}}\left( \tfrac{1}{\left( R-\varrho
\right) ^{\gamma \vartheta \left( q-p\right) }}\int_{B_{R}}\left(
1+\left\vert Du\left( x\right) \right\vert ^{2}\right) ^{\frac{p}{2}%
}\,dx\right) ^{\frac{\alpha q}{\vartheta p}}
\end{equation*}%
\begin{equation}
=\tfrac{c}{\left( R-\rho \right) ^{2+\alpha \gamma q\left( \frac{q}{p}%
-1\right) }}\left\Vert \left( 1+\left\vert Du\left( x\right) \right\vert
^{2}\right) ^{\frac{1}{2}}\right\Vert _{L^{p}\left( B_{R}\right) }^{\frac{%
\alpha q}{\vartheta }}\,.\,  \label{final bound for the second derivatives}
\end{equation}%
This is the conclusion of the $W_{\mathrm{loc}}^{2,2}\left( \Omega \right) -$%
estimate, as stated in (\ref{bound on second derivatives}). Note that in the
special case $q=p$ all the parameters in this estimate simplify, $\alpha
=\vartheta =1$, and the bounds (\ref{bound on second derivatives for q=p}),(%
\ref{interpolation 1}) are reproduced.

The proof of Theorem \ref{final theorem} is now complete.

The computations in this Section \ref{Section: Local Lipschitz continuity}
are useful for proving Theorem \ref{final theorem 2} too. More precisely we
are now under the ellipticity (\ref{ellipticity}) and the $p,q-$growth
conditions (\ref{growth assumptions 2}),(\ref{growth assumptions 3}). In
particular, other than the ellipticity assumption (\ref{ellipticity}), with
the growth conditions%
\begin{equation}
\left\{ 
\begin{array}{l}
\left\vert \frac{\partial a^{i}}{\partial \xi _{j}}\right\vert \leq
M(1+\left\vert \xi \right\vert ^{2})^{\frac{q-2}{2}}+M\left\vert
u\right\vert ^{\alpha } \\ 
\left\vert \frac{\partial a^{i}}{\partial u}\right\vert \leq M(1+\left\vert
\xi \right\vert ^{2})^{\frac{q-2}{2}}+M\left\vert u\right\vert ^{\beta -1}
\\ 
\left\vert \frac{\partial a^{i}}{\partial x_{s}}\right\vert _{\;}\leq
M\left( L\right) (1+\left\vert \xi \right\vert ^{2})^{\frac{q-1}{2}} \\ 
\left\vert a^{i}\left( x,0,0\right) \right\vert \in L_{\mathrm{loc}}^{\gamma
^{\;}} \\ 
\left\vert b\left( x,u,\xi \right) \right\vert \leq M(1+\left\vert \xi
\right\vert ^{2})^{\frac{q-1}{2}}+M\left\vert u\right\vert ^{\delta
-1}+b_{0}\left( x\right) \,%
\end{array}%
\right.  \label{growth assumptions 2 in the proof}
\end{equation}%
Conditions (\ref{growth assumptions 2 in the proof})$_{2}$,(\ref{growth
assumptions 2 in the proof})$_{3}$,(\ref{growth assumptions 2 in the proof})$%
_{5}$ respectively correspond to (\ref{growth 0-b}),(\ref{growth 2}),(\ref{b
locally bounded}) when we replace $\frac{p+q}{2}$ by $q$. For a better
understanding, let us denote by $r:=\frac{p+q}{2}$; then in accordance $%
q=2r-p$. This means that, if for instance (\ref{growth assumptions 2 in the
proof})$_{2}$ corresponds to (\ref{growth 0-b}) when we replace $\frac{p+q}{2%
}$ by $q$, likewise in (\ref{growth assumptions 2 in the proof})$_{1}$ we
should replace $q$ with $2q-p$. In fact, since $2q-p\geq q$, then if (\ref%
{growth assumptions 2 in the proof})$_{1}$ holds, then all the more so 
\begin{equation*}
\left\vert \frac{\partial a^{i}}{\partial \xi _{j}}\right\vert \leq
M(1+\left\vert \xi \right\vert ^{2})^{\frac{2q-p-2}{2}}+M\left\vert
u\right\vert ^{\alpha }
\end{equation*}%
when we limit $\alpha $ (recall that, in Theorem \ref{final theorem}, $0\leq
\alpha \leq \frac{2\left( q-2\right) }{q-p+2}$) with the corresponding bound 
\begin{equation*}
0\leq \alpha \leq \tfrac{2\left\{ \left( 2q-p\right) -2\right\} }{\left(
2q-p\right) -p+2}=\tfrac{2q-p-2}{q-p+1}\,,
\end{equation*}%
as stated in (\ref{growth assumptions 3})$_{1}$. Note that, when $q=p$, the
two constraints for $\alpha $ coincide each other. About the condition $%
\frac{q}{p}<1+\frac{1}{n}$, when we replace $q$ by $2q-p$ we obtain $\frac{%
2q-p}{p}<1+\frac{1}{n}$; i.e., $\frac{q}{p}<1+\frac{1}{2n}$. Finally the
exponent in the $W^{1,\infty }-$local estimate (\ref{interpolation bound 3}%
), which in Theorem \ref{final theorem} when $n\geq 3$ was equal to $\frac{2p%
}{\left( n+2\right) p-nq}$, now becomes $\frac{p}{\left( n+1\right) p-nq}$.
The proof of Theorem \ref{final theorem 2} is complete too.

\bigskip

\bigskip

\textbf{Acknowledgement} \ The authors are members of the \textit{Gruppo
Nazionale per l'Analisi Matematica, la Probabilit\`{a} e le loro
Applicazioni (GNAMPA)} of the \textit{Istituto Nazionale di Alta Matematica
(INdAM)}.

\bigskip


\begin{thebibliography}{99}
\bibitem{Adimurthi-Tewary 2022} K.\textsc{\ Adimurthi, V. Tewary:}\
Borderline Lipschitz regularity for bounded minimizers of functionals with $%
(p,q)-$growth, Forum Math., \textbf{34} (2022), 1365-1381.

\bibitem{Amoroso-Crespo-Blanco-Pucci-Winkert 2023} \textsc{E. Amoroso, A.
Crespo-Blanco, P. Pucci, P. Winkert:}{\ }Superlinear elliptic equations with
unbalanced growth and nonlinear boundary condition, 2023, to appear.
https://arxiv.org/abs/2305.17930

\bibitem{Arora-Fiscella-Mukherjee-Winkert 2023} \textsc{R. Arora, A.
Fiscella, T. Mukherjee, P. Winkert: }Existence of ground state solutions for
a Choquard double phase problem, Nonlinear Analysis: Real World
Applications, \textbf{73} (2023), p. 103914.

\bibitem{Baroni-Colombo-Mingione 2018} \textsc{P. Baroni, M. Colombo, G.
Mingione:}{\ Regularity for general functionals with double phase,}\emph{\ }%
Calc. Var. Partial Differ. Equ., \textbf{57} (2018), 48 pp.

\bibitem{Bella-Schaffner 2020} \textsc{P. Bella, M. Sch\"{a}ffner:} On the
regularity of minimizers for scalar integral functionals with $(p,q)-$%
growth, Anal. PDE, \textbf{13} (2020), 2241--2257.

\bibitem{Boegelein-Dacorogna-Duzaar-Marcellini-Scheven 2020} \textsc{V. B%
\"{o}gelein, B. Dacorogna, F. Duzaar, P. Marcellini, C. Scheven:} Integral
convexity and parabolic systems, \textit{SIAM Journal on Mathematical
Analysis (SIMA)}, \textbf{52} (2020), 1489-1525.

\bibitem{Boegelein-Duzaar-Marcellini-Scheven JMPA 2021} \textsc{V. B\"{o}%
gelein, F. Duzaar, P. Marcellini, C. Scheven:} Boundary regularity for
elliptic systems with $p,q-$growth, J. Math. Pures Appl., \textbf{159}
(2022), 250--293.

\bibitem{Byun-Oh 2020} \textsc{Sun-Sig Byun, Jehan Oh:} Regularity results
for generalized double phase functionals, Anal. PDE, \textbf{13} (2020),
1269-1300.

\bibitem{Chlebicka 2018} \textsc{I. Chlebicka:} A pocket guide to nonlinear
differential equations in Musielak--Orlicz spaces, \textit{Nonlinear Analysis%
}, \textbf{175 }(2018), 1-27.

\bibitem{Chlebicka-DeFilippis 2019} \textsc{I. Chlebicka, C. De Filippis:}
Removable sets in non-uniformly elliptic problems, \textit{Annali di
Matematica Pura ed Applicata}, \textbf{199} (2020), 619-649.

\bibitem{Cianchi-Mazya 2011} \textsc{A. Cianchi, V.G. Maz'ya:} Global
Lipschitz regularity for a class of quasilinear elliptic equations, Comm.
Partial Differential Equations, \textbf{36} (2011), 100-133.

\bibitem{Cianchi-Mazya 2014} \textsc{A. Cianchi, V.G. Maz'ya:} Global
boundedness of the gradient for a class of nonlinear elliptic systems, Arch.
Rational Mech. Anal.\emph{,} \textbf{212} (2014), 129-177.

\bibitem{Colombo-Figalli 2014} \textsc{M. Colombo, A. Figalli:} Regularity
results for very degenerate elliptic equations, J. Math. Pures Appl., 
\textbf{101} (2014), 94-117.

\bibitem{Colombo-Mingione 2015} \textsc{M. Colombo, G. Mingione:} Regularity
for double phase variational problems, Arch. Rational Mech. Anal., \textbf{%
215} (2015), 443--496.

\bibitem{Cupini-Leonetti-Mascolo 2017} \textsc{G. Cupini, G. Leonetti, E.
Mascolo:} Local Boundedness for minimizers of some polyconvex integrals,
Arch. Rational Mech. Anal., \textbf{224 }(2017), 269-289.

\bibitem{Cupini-Marcellini-Mascolo 2014} \textsc{G. Cupini, P. Marcellini,
E. Mascolo:} {Existence and regularity for elliptic equations under $p,q-$%
growth,} {\ Adv. Differential Equations},\text{\ }$\mathbf{19}${\ (2014),
693-724.}

\bibitem{Cupini-Marcellini-Mascolo 2017} \textsc{G. Cupini, P. Marcellini,
E. Mascolo:}{\ Regularity of minimizers under limit growth conditions,}
Nonlinear Anal., \textbf{153} (2017), 294-310.

\bibitem{Cupini-Marcellini-Mascolo 2018} \textsc{G. Cupini, P. Marcellini,
E. Mascolo:} \ Nonuniformly elliptic energy integrals with $p,q-$growth, {%
Nonlinear Anal.,} \textbf{177} (2018), 312-324.

\bibitem{Cupini-Marcellini-Mascolo 2023} \textsc{G. Cupini, P. Marcellini,
E. Mascolo:} Local boundedness of weak solutions to elliptic equations with
p,q-growth, Math. Eng., \textbf{5} (2023), no. 3, Paper No. 065, 28 pp. \
https://doi.org/10.3934/mine.2023065

\bibitem{Cupini-Marcellini-Mascolo Leray-Lions 2024} \textsc{G. Cupini, P.
Marcellini, E. Mascolo:} The Leray-Lions existence theorem under general
growth conditions, preprint 2023.

\bibitem{Cupini-Marcellini-Mascolo-Passarelli 2023} \textsc{G. Cupini, P.
Marcellini, E. Mascolo, A. Passarelli di Napoli:} Lipschitz regularity for
degenerate elliptic integrals with $p,q-$growth, Advances in Calculus of
Variations, \textbf{16} (2023), 443-465.

\bibitem{Dacorogna-Marcellini 1998} \textsc{B. Dacorogna, P. Marcellini:}
Cauchy--Dirichlet problem for first order nonlinear systems, J. Functional
Analysis, \textbf{152} (1998), 404-446.

\bibitem{De Giorgi 1957} \textsc{E. De Giorgi:} Sulla differenziabilit\`{a}
e l'analiticit\`{a} delle estremali degli integrali multipli regolari, Mem.
Accad. Sci. Torino, Cl. Sci. Fis. Mat. Nat., \textbf{3} (1957), 25-43.

\bibitem{De Filippis JMPA 2022} \textsc{C. De Filippis:} Quasiconvexity and
partial regularity via nonlinear potentials, J. Math. Pures Appl., \textbf{%
163} (2022), 11-82.

\bibitem{DeFilippis-Leonetti-Marcellini-Mascolo 2023} \textsc{F. De
Filippis, F. Leonetti, P. Marcellini, E. Mascolo:} The Sobolev class where a
weak solution is a local minimizer, Atti Accad. Naz. Lincei Rend. Lincei
Mat. Appl., 2023, to appear.

\bibitem{DeFilippis-Mingione 2020} \textsc{C. De Filippis, G. Mingione:} On
the regularity of minima of non-autonomous functionals, J. Geom. Anal., 
\textbf{30} (2020), 1584-1626.

\bibitem{DeFilippis-Mingione ARMA 2021} \textsc{C. De Filippis, G. Mingione:}
Lipschitz bounds and nonautonomous integrals, Arch. Rational Mech. Anal., 
\textbf{242} (2021), 973-1057.


\bibitem{DeFilippis-Mingione Inv 2023} \textsc{C. De Filippis, G. Mingione:} 
Nonuniformly elliptic Schauder theory, Invent. math. (2023). https://doi.org/10.1007/s00222-023-01216-2







\bibitem{Defilippis-Piccinini 2022-2023} \textsc{C. De Filippis, M.
Piccinini:} Borderline global regularity for nonuniformly elliptic systems,
International Mathematics Research Notices, to appear.
https://doi.org/10.1093/imrn/rnac283

\bibitem{De Filippis-Stroffolini 2023} \textsc{C. De Filippis, B.
Stroffolini:} Singular multiple integrals and nonlinear potentials, Journal
of Functional Analysis, \textbf{285} (2023). \
https://www.sciencedirect.com/science/article/pii/S002212362300109X

\bibitem{DiBenedetto 1983} \textsc{E. Di Benedetto:} $C^{1+\alpha }$ local
regularity of weak solutions of degenerate elliptic equations, Nonlinear
Anal., \textbf{7} (1983), 827--850.

\bibitem{Diening-Harjulehto-Hasto-Ruzicka 2011} \textsc{L. Diening, P.
Harjulehto, P. Hasto, M. Ruzicka:} Lebesgue and Sobolev Spaces with Variable
Exponents, Lecture Notes in Mathematics, \textbf{2017}, Springer,
Heidelberg, 2011.

\bibitem{DiMarco-Marcellini 2020} \textsc{T. Di Marco, P. Marcellini:}
A-priori gradient bound for elliptic systems under either slow or fast
growth conditions, Calc. Var. Partial Differential Equations, \textbf{59}
(2020), 26 pp.

\bibitem{Duzaar-Mingione 2010} \textsc{F. Duzaar, G. Mingione:} Gradient
estimates via linear and nonlinear potentials, J. Funct. Anal., \textbf{259}
(2010), 2961-2998.

\bibitem{Eleuteri-Marcellini-Mascolo 2019} \textsc{M. Eleuteri, P.
Marcellini, E. Mascolo:} Regularity for scalar integrals without structure
conditions, Adv. Calc. Var., \textbf{13} (2020) 279-300.

\bibitem{Eleuteri-Marcellini-Mascolo-Perrotta 2022} \textsc{M. Eleuteri, P.
Marcellini, E. Mascolo, S. Perrotta:} Local Lipschitz continuity for energy
integrals with slow growth, Ann. Mat. Pura Appl., \textbf{201} (2022),
1005-1032.

\bibitem{Eleuteri-Passarelli 2023} \textsc{M. Eleuteri, A. Passarelli di
Napoli:} Lipschitz regularity of minimizers of variational integrals with
variable exponents, Nonlinear Analysis: Real World Applications, \textbf{71}
(2023), p. 103815.

\bibitem{Esposito-Leonetti-Mingione 1999} \textsc{L. Esposito, F. Leonetti,
G. Mingione:} Higher integrability for minimizers of integral functionals
with $(p,q)-$growth, J. Differential Equations, \textbf{157} (1999), 414-438.

\bibitem{Evans 1982 JDE} \textsc{L.C. Evans:} A new proof of local $%
C^{1,\alpha }$ regularity for solutions of certain degenerate elliptic
p.d.e., J. Differential Equations, \textbf{45} (1982), 356-373.

\bibitem{Fang-Radulescu-Zhang-Zhang 2022} \textsc{Yuzhou Fang, V.D. R\u{a}%
dulescu, Chao Zhang, Xia Zhang:} Gradient estimates for multi-phase problems
in Campanato spaces, Indiana University Mathematics Journal, \textbf{71}
(2022).

\bibitem{Giusti 2003 book} \textsc{E. Giusti:} Direct methods in the
calculus of variations, World Scientific Publishing Co., Inc., River Edge,
NJ, 2003.

\bibitem{Hasto-Ok 2022} \textsc{P. H\"{a}st\"{o}, J. Ok:}{\ }Maximal
regularity for local minimizers of non-autonomous functionals, J. Eur. Math.
Soc., \textbf{24} (2022), 1285-1334.

\bibitem{Ladyzhenskaya-Uraltseva 1968} \textsc{O. Ladyzhenskaya, N.
Ural'tseva:}{\ }Linear and quasilinear elliptic equations. Translated from
the Russian, New York-London, 1968.

\bibitem{Leray-Lions 1965} \textsc{J. Leray, J.L. Lions:}{\ Quelques r\'{e}%
sultats de Vi\u{s}ik sur les probl\`{e}mes elliptiques nonlin\'{e}aires par
les m\'{e}thodes de Minty-Browder,} Bull. Soc. Math. France, \textbf{93} {%
(1965) 97-107}.

\bibitem{Lieberman 1991} \textsc{G.M. Lieberman:} The natural generalization
of the natural conditions of Ladyzhenskaya and Ural'tseva for elliptic
equations, Comm. Partial Differential Equations, \textbf{16} (1991), 311-361.

\bibitem{Liskevich-Skrypnik-2009} \textsc{V. Liskevich, I. Skrypnik:} H\"{o}%
lder continuity of solutions to an anisotropic elliptic equation, Nonlinear
Analysis, \textbf{71} (2009),1699-1708.

\bibitem{Manfredi 1988} \textsc{J. Manfredi:} Regularity for minima of
functionals with $p-$growth, J. Differential Equations, \textbf{76} (1988),
203-212.

\bibitem{Marcellini 1984} \textsc{P. Marcellini:}{\ }Quasiconvex quadratic
forms in two dimensions, Applied Mathematics and Optimization, \textbf{11}
(1984), 183-189.

\bibitem{Marcellini ARMA 1989} \textsc{P. Marcellini:}{\ Regularity of
minimizers of integrals in the calculus of variations with non standard
growth conditions},\ Arch. Rational Mech. Anal.,\textbf{\ 105}{\ }(1989),
267-284.

\bibitem{Marcellini 1991} \textsc{P. Marcellini:}{\ Regularity and existence
of solutions of elliptic equations with }$p,q-$growth conditions,\ J.
Differential Equations,\textbf{\ 90}{\ (1991), 1-30}.

\bibitem{Marcellini 1993} \textsc{P. Marcellini:}{\ Regularity for elliptic
equations with general growth conditions},\ J. Differential Equations,%
\textbf{\ 105}{\ (1993), 296-333}.

\bibitem{Marcellini JOTA 1996} \textsc{P. Marcellini:} Regularity for some
scalar variational problems under general growth conditions, J. Optim.
Theory Appl., \textbf{90} (1996), 161-181.

\bibitem{Marcellini AnnaliPisa 1996} \textsc{P. Marcellini:} Everywhere
regularity for a class of elliptic systems without growth conditions, Ann.
Sc. Norm. Super. Pisa, 23 (1996), 1-25.

\bibitem{Marcellini 2020} \textsc{P. Marcellini:} Regularity under general
and $p,q-$growth conditions, Discrete Contin. Dyn. Syst. Ser. S, \textbf{13}
(2020), 2009-2031.

\bibitem{Marcellini JMAA 2021} \textsc{P. Marcellini:}{\ }Growth conditions
and regularity for weak solutions to nonlinear elliptic pdes, J. Math. Anal.
Appl., \textbf{501 }(2021), 32 pp.

\bibitem{Marcellini 2023} \textsc{P. Marcellini:} Local Lipschitz continuity
for $p,q-$PDEs with explicit $u-$dependence, Nonlinear Anal., \textbf{226}
(2023), Paper No. 113066, 26 pp. https://doi.org/10.1016/j.na.2022.113066

\bibitem{Mihailescu-Pucci-Radulescu 2008} \textsc{M. Mihailescu, P. Pucci,
V. R\u{a}dulescu:} Eigenvalue problems for anisotropic quasilinear elliptic
equations with variable exponent, J. Math. Anal. Appl., \textbf{340} (2008),
687--698.

\bibitem{Mingione-Radulescu 2021} \textsc{G. Mingione, V. R\u{a}dulescu:}
Recent developments in problems with nonstandard growth and nonuniform
ellipticity, J. Math. Anal. Appl., \textbf{501} (2021), 41 pp.

\bibitem{Papageorgiou-Radulescu-Zhang 2022} \textsc{N.S. Papageorgiou, V.D. R%
\u{a}dulescu, Y. Zhang:} Resonant double phase equations, Nonlinear
Analysis: Real World Applications, \textbf{64} (2022), 103454, 20 pp.

\bibitem{Ragusa-Razani-Safari 2021} \textsc{M.A. Ragusa, A. Razani, F.
Safari:} Existence of radial solutions for a $p(x)-$Laplacian Dirichlet
problem., Adv. Difference Equ. 2021, Paper No. 215, 14 pp.

\bibitem{Ragusa-Tachikawa 2020} \textsc{M.A. Ragusa, A. Tachikawa:}
Regularity for minimizers for functionals of double phase with variable
exponents, Adv. Nonlinear Anal., \textbf{9} (2020), 710--728.

\bibitem{Schmidt 2009} \textsc{T. Schmidt:} Regularity of relaxed minimizers
of quasiconvex variational integrals with $(p,q)-$growth, Arch. Rational
Mech. Anal., \textbf{193} (2009), 311--337.

\bibitem{Tolksdorf 1984 JDE} \textsc{P. Tolksdorf:} Regularity for a more
general class of quasilinear elliptic equations, J. Differential Equations, 
\textbf{51} (1984), 126-150.

\bibitem{Zhang-Li 2023} \textsc{Junli Zhang, Zhouyu Li:} Weak
differentiability to nonuniform nonlinear degenerate elliptic systems under $%
p,q-$growth condition on the Heisenberg Group, preprint 2023.\ \ \
https://arxiv.org/abs/2302.02320v1

\bibitem{Zhang-Radulescu 2018} \textsc{Qihu Zhang, V.D. R\u{a}dulescu:}
Double phase anisotropic variational problems and combined effects of
reaction and absorption terms, J. Math.Pures Appl., \textbf{118} (2018),
159--203.

\bibitem{Zhikov 1987} \textsc{V.V. Zhikov:} Averaging of functionals of the
calculus of variations and elasticity theory, Izv. Akad. Nauk SSSR Ser. Mat. 
\textbf{50} (1986), 675--710; English translation in Math. USSR-Izv. \textbf{%
29} (1987), 33--66.

\bibitem{Zhikov 1995} \textsc{V.V. Zhikov:} On Lavrentiev's phenomenon,
Russian J. Math. Phys., \textbf{3} (1995) 249--269.
\end{thebibliography}
\end{document}